\documentclass[reqno,11pt]{amsart}

\usepackage{amsmath,amssymb,amsthm,tikz,mathtools}
\usepackage{mathrsfs}       % Script fonts
\usepackage{hyperref}
\usepackage{xcolor}         % Colored text
\usepackage{esint}
\usepackage{tikz}           % Drawing
\usepackage{verbatim}       % Commenting
\usepackage{setspace}       % Line spacing
\usepackage[normalem]{ulem}
\usepackage{cancel}
\title[Fractional $p-$caloric functions are Lipschitz]{Fractional $p-$caloric functions are Lipschitz}

\author[D. Jesus]{David Jesus}
\address{Applied Mathematics and Computational Sciences (AMCS), Computer, Electrical and Mathematical Sciences and Engineering Division (CEMSE), King Abdullah University of Science and Technology (KAUST), Thuwal, 23955-6900, Kingdom of Saudi Arabia}
\email{david.dejesus@kaust.edu.sa}

\author[A. Sobral]{Aelson Sobral}
\address{Applied Mathematics and Computational Sciences (AMCS), Computer, Electrical and Mathematical Sciences and Engineering Division (CEMSE), King Abdullah University of Science and Technology (KAUST), Thuwal, 23955-6900, Kingdom of Saudi Arabia}
\email{aelson.sobral@kaust.edu.sa}

\author[J.M. Urbano]{Jos\'{e} Miguel Urbano\textsuperscript{\dag}}
\thanks{\textsuperscript{\dag}Corresponding author.}
\address{Applied Mathematics and Computational Sciences (AMCS), Computer, Electrical and Mathematical Sciences and Engineering Division (CEMSE), King Abdullah University of Science and Technology (KAUST), Thuwal, 23955-6900, Kingdom of Saudi Arabia and CMUC, Department of Mathematics, University of Coimbra, 3000-143 Coimbra, Portugal}
\email{miguel.urbano@kaust.edu.sa}

\newtheorem{theorem}{Theorem}[section]
\newtheorem{lemma}{Lemma}[section]

\newtheorem{proposition}{Proposition}[section]
\newtheorem{definition}{Definition}[section]

\newcommand{\Tail}{\operatorname{Tail}}

\newcommand{\abs}[1]{\left|#1\right|}

\newcommand{\R}{\mathbb{R}}

\newcommand{\p}{\partial}
\newcommand{\LL}{\mathcal{L}}
\newcommand{\II}{\mathcal{I}}

\newcommand{\dd}{\, \mathrm{d}}

\newcommand{\C}{\mathcal{C}}

\newcommand{\intav}[1]{\mathchoice 
  {\mathop{\vrule width 6pt height 3 pt depth -2.5pt \kern -8pt \intop}\nolimits_{\kern -6pt#1}} 
  {\mathop{\vrule width 5pt height 3 pt depth -2.6pt \kern -6pt \intop}\nolimits_{#1}}
  {\mathop{\vrule width 5pt height 3 pt depth -2.6pt \kern -6pt \intop}\nolimits_{#1}}
  {\mathop{\vrule width 5pt height 3 pt depth -2.6pt \kern -6pt \intop}\nolimits_{#1}}}

\numberwithin{equation}{section}
\begin{document}

\subjclass[2020]{Primary 35B65, 35R11. Secondary 35B51, 35D40, 35K92}

% 35B65 Smoothness and regularity of solutions to PDEs

% 35R11 Fractional partial differential equations

% 35B51 Comparison principles in context of PDEs

% 35D40 Viscosity solutions to PDEs

% 35K92 Quasilinear parabolic equations with $p-$Laplacian

\keywords{Fractional $p-$caloric, Lipschitz regularity, Comparison principles, Weak and viscosity solutions}

\begin{abstract} 
We study the parabolic fractional $p-$Laplace equation 
$$
    \p_t u+(-\Delta_p)^su = 0
$$
in the degenerate range \(2 < p < 2/(1-s)\). We show that weak solutions are Lipschitz continuous in space and, if \(p > 1/(1-s)\), also in time. We also prove a comparison principle for both weak and viscosity solutions, and establish the equivalence between the two notions of solution.
\end{abstract}  

\date{\today}

\maketitle

\section{Introduction} \label{s:intro}

In this paper, we study solutions to the parabolic fractional $p-$Laplace equation
\begin{equation}\label{eq:parabolic-fractional-p-laplacian}
\p_t u+(-\Delta_p)^su = 0,
\end{equation}
for $s\in(0,1)$ and \(2 < p < 2/(1-s)\), where
\[
  (-\Delta_p)^su(x,t) \coloneqq \mathrm{P.V.} \int_{\R^d} \frac{|u(x,t)-u(y,t)|^{p-2}}{|x-y|^{d+sp}}(u(x,t) - u(y,t))\,dy.
\]
The interest in this problem stems not only from its rich mathematical structure, but also from its relevance in applications, particularly in models describing the rate at which individuals arrive at a given position \(x\) from all other locations, when both nonlinearity and long-range interactions are important. Moreover, equation \eqref{eq:parabolic-fractional-p-laplacian} arises naturally as the gradient flow associated with the Gagliardo \(W^{s,p}-\)seminorm, defined for \(s \in (0,1)\) and \(p \in [1,\infty)\) by
\begin{equation*}
    [u]_{W^{s,p}(\R^d)} \coloneqq \left( \iint_{\R^d \times \R^d} \frac{|u(x)-u(y)|^p}{|x-y|^{d+sp}} \, dx \, dy \right)^{1/p}.
\end{equation*}

A substantial regularity theory has been developed for nonlocal equations driven by the fractional \(p-\)Laplacian,  an operator first introduced in \cite{AMRT, IN}. In the stationary setting, both weak and viscosity solutions are known to satisfy a Harnack inequality and to enjoy H\"older continuity and higher integrability properties; see \cite{BDLMS, BDLM, BL, BLS1, DCKP}. More recently, Lipschitz estimates were obtained in \cite{BS,BT}, and \(C^{1,\alpha}-\)regularity was established in \cite{GJS}. By contrast, in the evolutionary setting, the regularity theory remains fairly limited, with the currently available results concerning only the H\"older regularity of weak solutions; see \cite{BLS2, GLT, L}.  

The main contribution of this work is the following regularity result stated in the parabolic cylinders $Q_r \coloneqq B_r(0)\times (-r^2,0]$. For the definition of the tail space $L^{p-1}_{sp}(\R^d)$, see subsection \ref{ss:constnotunv}.

\begin{theorem}\label{t:Lipschitz}
Let $p > 2$ and $s \in (0,1)$ be such that
\[
    q_c \coloneqq - 1 + p(1-s) < 1.
\]
Let \(u\) be a weak solution to \eqref{eq:parabolic-fractional-p-laplacian} in \(Q_1\), and assume that
\begin{equation}\label{eq:LiDi_fora}
    u \in C_{\mathrm{loc}} (-1,0;L^{p-1}_{sp}(\R^d)).
\end{equation}
Set
\[
    \mathcal{M} \coloneqq \|u\|_{L^\infty(Q_{3/4})} + \sup_{t \in (-(3/4)^2,0]} \|u(\cdot,t)\|_{L^{p-1}_{sp}(\R^d)}.
\]
Then, the following assertions hold:

\begin{enumerate}

\item If \(q_c \leq 0\), then for every $\alpha < \frac{1}{1-q_c}$, there exists a constant \(C>0\), depending only on \(d\), \(p\), \(s\), and \(\alpha\), such that
\[
    |u(x,t)-u(y,\tau)| \leq C \mathcal{M} \Bigl(|x-y| + \mathcal{M}^{(p-2)\alpha}|t-\tau|^\alpha\Bigr)
\]
for all \((x,t),(y,\tau) \in Q_{1/2}\).

\medskip

\item If \(q_c>0\), then there exists a constant \(C>0\), depending only on \(d\), \(p\), and \(s\), such that
\[
    |u(x,t)-u(y,\tau)| \leq C \mathcal{M} \Bigl(|x-y| + \mathcal{M}^{p-2}|t-\tau|\Bigr)
\]
for all \((x,t),(y,\tau) \in Q_{1/2}\).
\end{enumerate}
\end{theorem}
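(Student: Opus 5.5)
After normalizing by scaling ($v = u/\mathcal{M}$, with time rescaled by $\mathcal{M}^{p-2}$ to keep the equation invariant), we may assume $\mathcal{M}\le 1$. We plan to use the Ishii--Lions doubling-variables method in the viscosity framework. The abstract announces that weak solutions satisfying \eqref{eq:LiDi_fora} are viscosity solutions, and that comparison holds for both notions. We therefore work with $u$ as a continuous viscosity solution in $Q_{3/4}$. This is legitimate because the known H\"older regularity of \cite{BLS2,GLT,L} gives continuity, and the tail assumption \eqref{eq:LiDi_fora} makes the nonlocal operator well defined along test functions.

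\textbf{Spatial Lipschitz bound.} We argue via an improvement-of-regularity iteration driven by the exponent $q_c$. Suppose $u$ is already $C^{0,\beta}$ in space, with the corresponding time modulus. Fix $t_0$ and consider
\[
\Phi(x,y,t)=u(x,t)-u(y,t)-L\varphi(|x-y|)-\tfrac{M}{2}|x-x_0|^2-\tfrac{M}{2}(t-t_0)^2,
\]
where $\varphi(r)=r-\kappa r^{1+\gamma}$ is concave. Assume its maximum is positive; we aim for a contradiction when $L$ is large.

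At the maximum point $(\bar x,\bar y,\bar t)$, the viscosity inequalities yield
\[
(-\Delta_p)^s_{\text{test}}u(\bar y)-(-\Delta_p)^s_{\text{test}}u(\bar x)\le CM.
\]
The time derivatives cancel up to the penalization. We then split $\mathbb{R}^d$ into three regions:
\begin{itemize}
\item a cone $\mathcal{C}$ around the direction $\bar x-\bar y$ at scale $\delta|\bar x-\bar y|$, where the concavity of $\varphi$ yields a strongly negative contribution of order $-L^{p-1}|\bar x-\bar y|^{(1-s)p-1}\cdot(\text{something})$;
\item the remaining part of $B_{r_0}$, which is controlled using the maximality of $\Phi$ (i.e.\ $u(\bar x+z)-u(\bar y+z)\le u(\bar x)-u(\bar y)$ up to penalization) together with the monotonicity of $t\mapsto|t|^{p-2}t$;
\item the far region, which is controlled by the tail norm.
\end{itemize}
Since $p>2$ is degenerate, the cone estimate needs a lower bound on the gradient size there. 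This is supplied by $u(\bar x)-u(\bar y)\ge L\varphi(|\bar x-\bar y|)$, which makes $|\delta u|\gtrsim L|\bar x-\bar y|$. The exponent $q_c=-1+p(1-s)$ appears as the power of $|\bar x-\bar y|$. When $q_c<1$, the good term dominates the errors, which are bounded via the current $C^{\beta}$ modulus, and this improves $\beta$ to $\min(1,\beta+\text{gain})$. After finitely many iterations starting from the known H\"older estimate, we reach Lipschitz in space with a constant depending on $d,p,s$.

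\textbf{Time regularity.} With $u$ spatially Lipschitz, we apply comparison with explicit barriers of the form
\[
u(x_0,t_0)\pm\Big(C|x-x_0|^{\theta}+A(t-t_0)\Big),
\]
cut off outside a ball. Computing $(-\Delta_p)^s$ of $|x|^{\theta}$-type profiles truncated at the Lipschitz bound shows that the barrier is a super/subsolution when $A\sim r^{q_c-1}$ at scale $r$. Optimizing in $r$ against the Lipschitz oscillation $Cr$ gives
\[
|u(x_0,t)-u(x_0,t_0)|\lesssim r+A|t-t_0|.
\]
If $q_c>0$, the fractional operator of a Lipschitz cone is bounded, so $A$ is bounded and we obtain Lipschitz in time. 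If $q_c\le0$, the operator applied to the Lipschitz profile blows up like $r^{q_c-1}$; choosing $r=|t-t_0|^{1/(1-q_c)}$ gives exponent $1/(1-q_c)$, with a logarithmic loss at $q_c=0$, hence every $\alpha<1/(1-q_c)$. Undoing the scaling gives the factor $\mathcal{M}^{(p-2)\alpha}$.

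\textbf{Main obstacle.} We expect the hardest step to be the cone estimate in the degenerate regime. It requires showing that the concavity gain $|\bar x-\bar y|^{q_c-1}L^{p-1}$ beats the error terms, which involve $L^{p-2}$ times H\"older increments and the time penalization. This step requires $q_c<1$ and the careful iteration in $\beta$.
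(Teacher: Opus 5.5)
There is a genuine gap in your spatial Lipschitz step, and it sits exactly where the parabolic problem departs from the elliptic one. Your functional $\Phi(x,y,t)$ uses a single time variable. At its maximum, the would-be test function for $u$ at $(\bar x,\bar t)$ contains $t\mapsto u(\bar y,t)$, which is not $C^1$. So no viscosity inequality in the sense of Definition \ref{def:viscosity-solution} follows, and ``the time derivatives cancel up to the penalization'' is unjustified; you would need a nonlocal parabolic theorem of sums, which you neither state nor prove. The paper instead doubles time with $K(t-\tau)^2$. The two viscosity inequalities then live at different times $t_K\neq\tau_K$, and the far-field difference contains $\int_{B_{1/16}^c}|u(y_K+z,t_K)-u(y_K+z,\tau_K)|^{p-1}|z|^{-d-sp}\dd z$. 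There is no regularity at all for this term, so ``controlled by the tail norm'' does not suffice. The missing idea is the order in which the parameters are chosen. First fix $L$, and hence $|\tilde a|>0$. Only then take $K$ large, using $u\in C_{\mathrm{loc}}(-1,0;L^{p-1}_{sp}(\R^d))$, so that \eqref{eq:InshSil} holds: $|b_K|^{\kappa_0}$ plus this tail difference is at most $|a_K|$. This yields $\II_4\ge -C|a_K|^\kappa$ and lets the interior and time terms be absorbed through the space-time H\"older bound. It also keeps the final Lipschitz constant independent of the tail's modulus of continuity.

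Second, bounding the time-derivative contribution by a constant $CM$ cannot produce a contradiction when $q_c\ge 0$. To reach Lipschitz you must pass H\"older exponents above $sp/(p-1)$, which is $<1$ exactly when $q_c>0$. There the cone gain is only $L^{p-1}|a|^{\mathcal{E}}$ with $\mathcal{E}=\kappa_1(p-1)-sp>0$. At the final logarithmic step the gain is $L^{p-1}|\log|a||^{-(d+1)}|a|^{q_c}$. Since you only know $|a|\to 0$ as $L\to\infty$, with no lower bound on $|a|$, neither gain need be large, and ``gain $\le C$'' is no contradiction.

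The paper penalizes with $L_2(t_0-t)^{1+\beta^\ast}$. Positivity of the maximum gives $L_2|t_K|^{1+\beta^\ast}<u(x_K,t_K)-u(y_K,\tau_K)\lesssim|a_K|^\kappa$. Hence the time term is $\lesssim |a_K|^{\kappa\beta^\ast/(1+\beta^\ast)}$ (Lemma \ref{l:estimate_time}). Taking $\kappa\in(q_c,1)$ and $\beta^\ast$ large makes it decay faster than the gain. This, together with $\theta<1$ in Step 5, is where $q_c<1$ is genuinely used. If you instead exploit positivity with your $\tfrac M2(t-t_0)^2$, you only get decay $|a|^{\beta/2}$ with $\beta<1$, which fails for $q_c\ge 1/2$.

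Also fix the sign of your subtracted inequality. It should read $(-\Delta_p)^s w_1(\bar x)-(-\Delta_p)^s w_2(\bar y)\le \p_t w_2-\p_t w_1$, with a large \emph{positive} cone term; as you wrote it, a negative cone term bounded above gives nothing.

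Your time-regularity barrier is essentially the paper's Proposition \ref{p:time-regularity-estimate-phip-desc}. One correction: the operator of the cone smoothed at scale $r$ is of order $r^{q_c}$, not $r^{q_c-1}$. With $r^{q_c}$, your choice $r=|t-t_0|^{1/(1-q_c)}$ is consistent.
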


The constant $q_c$ appearing in the theorem plays an important role in our analysis. In particular, we prove Lipschitz regularity, and the condition $q_c>0$ is precisely the one that ensures that the local contribution to $(-\Delta_p)^s |x|$ is finite.

Let us briefly highlight the main novelties and difficulties. Our result brings two main improvements to the existing parabolic literature. First, it establishes spatial Lipschitz regularity across the full degenerate range \(2 < p < 2/(1-s)\). In the elliptic case, the proof of the \(C^{1,\alpha}-\)regularity in \cite{GJS} relies crucially on these Lipschitz estimates, and here we carry out the corresponding step in the parabolic setting. Second, in the range \( 0 < q_c < 1 \), we prove that solutions are also Lipschitz continuous in time, which is optimal in view of the counterexample given in \cite[Remark 1.3]{BLS2}. For the case \(q_c \leq 0\), we obtain H\"older regularity in time for the same exponents as in \cite{BLS2}, but our proof consists of a simple barrier argument.

The proof of Theorem \ref{t:Lipschitz} proceeds in two steps. We first establish the spatial Lipschitz estimate by adapting the Ishii--Lions doubling-of-variables method, introduced in \cite{IL} for local equations, to the parabolic nonlocal setting. The argument starts with a spatially doubled functional at a fixed final time \(t_0\), but in order to obtain legitimate viscosity inequalities, one must also double the time variable. More precisely, after considering
\[
    u(x,t)-u(y,t)-L\omega(|x-y|)-L_2\psi(x)-L_2(t_0-t)^{1+\beta_*},
\]
we double the time variable and introduce the additional penalization \(K(t-\tau)^2\). At a maximum point \((x_K,y_K,t_K,\tau_K)\), the viscosity inequalities are then applied to two patched test functions touching \(u\) from above at \((x_K,t_K)\) and from below at \((y_K,\tau_K)\). Subtracting these inequalities reduces the estimate to a lower bound for the difference of two fractional \(p\)-Laplacians evaluated at two nearby, but not identical, times.

This is the main point where the parabolic argument differs from the elliptic one. In the elliptic setting, the usual spatial doubling immediately gives the required comparison of increments. Here, the time-doubling procedure produces two time levels \(t_K\) and \(\tau_K\). The interior part can be controlled using the already available H\"older continuity in space-time, while the exterior part involves the tail of \(u(\cdot,t_K)\) and \(u(\cdot,\tau_K)\), where no regularity is available. We overcome this by first fixing the spatial Lipschitz parameter \(L\) and only afterward choosing \(K\) sufficiently large so that \(t_K-\tau_K\) is small, thereby controlling the tail terms. This use of the continuity of \(u\) with values in \(L^{p-1}_{sp}(\mathbb R^d)\) is essential for closing the parabolic Ishii--Lions estimate; however, since \(L\) is chosen before \(K\), the final Lipschitz constant remains independent of the modulus of continuity of the tail. Once this estimate is established for a given modulus \(\omega\), the elliptic bootstrapping mechanism can be recovered, leading first to improved H\"older exponents and ultimately to the spatial Lipschitz bound.

To obtain regularity in the time variable, we exploit the spatial Lipschitz regularity to construct suitable barriers, following a fairly standard strategy in the local setting. In our case, however, the nonlocal part of the equation allows us to use barrier functions with substantially lower spatial regularity, which, in turn, enables us to transfer additional regularity to the time variable. This yields the same regularity regime as in \cite{BLS2} for $q_c \leq 0$. Somewhat surprisingly, for \( 0 < q_c < 1 \), the argument also gives Lipschitz regularity in time, since in this range, the kernel is only mildly singular. One might even hope to derive higher regularity estimates by regarding \(\partial_t u \in L^\infty\) as a bounded right-hand side, but this approach fails since solutions to the corresponding inhomogeneous equation are only expected to be of class \(C^{1,\alpha}\) in precisely the disjoint range \(q_c<0\).

A subtle but important point is that both arguments leading to the space-time estimates are based entirely on viscosity methods. Consequently, one needs a \textit{weak-implies-viscosity} result, which is precisely why the assumption
\[
    u \in C_{\mathrm{loc}}(-1,0;L^{p-1}_{sp}(\R^d))
\]
is imposed in \eqref{eq:LiDi_fora}, ensuring that the tail quantity of \(u\)  depends continuously on time. This requirement is quite natural: without it, the nonlocal part of \((-\Delta_p)^s u(x,\cdot)\), which should be regarded as part of the data, would not be continuous in time. For example, if a test function touches $u$ at a point $(x_0,t_0)$, we need to guarantee that the tail of $u$ at $t_0$ is well defined. We could weaken this assumption in the definition of viscosity solution to $\|u(\cdot,t)\|_{L^{p-1}_{sp}(\R^d)}$ being well-defined and finite for every time. However, we use in a crucial way that the tail is continuous in time in many instances of this paper. Nevertheless, the regularity estimates established in Theorem \ref{t:Lipschitz} do not depend on the modulus of continuity of the tail. A similar condition already appears in \cite{CLD}, for $p=2$.

In fact, our results go beyond showing that weak solutions are viscosity solutions. We also establish several key results, including a comparison principle for both notions of solution, which can be regarded as the parabolic counterparts of those obtained in \cite{KKL} and \cite{KKP}. We further discuss the converse implication, namely, whether viscosity solutions are also weak solutions, thereby providing a fairly comprehensive understanding of the viscosity framework. The overall strategy is inspired by the ideas developed in \cite{KKP}, as well as by \cite{JJ} in the local case. We obtain this implication assuming the solution at the tail is continuous and bounded, which is stronger than the assumptions for the other implication. However, these assumptions are natural and were also considered in \cite{CS, HL}. The implication with full generality would follow from a parabolic obstacle argument as in \cite[Lemma 9]{KKP}. However, we would need the parabolic counterpart of \cite{KKP0}.

To streamline the presentation of the main ideas, we restrict ourselves to the homogeneous setting. However, with no substantial additional difficulty, it should be possible to obtain spatial Lipschitz regularity also in the presence of a bounded source term, at least in the range \(2 < p < 1/(1-s)\). The barrier argument would likewise extend seamlessly whenever the right-hand side is bounded. We also believe that the same strategy could be applied to solutions of the doubly nonlinear equation
\[
    |\partial_t u|^{p-2}\partial_t u + (-\Delta_p)^s u = 0,
\]
see \cite{HL} and the references therein for further details. 

The paper is organized as follows. We dedicate Section \ref{s:prelim} to the mathematical setup and to recalling known results. In Section \ref{s:properties-solutions}, we prove the comparison principle for both weak and viscosity solutions, as well as the equivalence between weak and viscosity solutions. In Section \ref{s:lipschitz-regularity}, we prove the Lipschitz regularity in space. The time regularity is proved in Section \ref{s:time-variable-estimates}, which also contains the proof of Theorem \ref{t:Lipschitz}.

\section{Mathematical setup and preliminaries}\label{s:prelim}

In this section, we collect some preliminary results needed for the remainder of the paper. 

\subsection{Notation}\label{ss:constnotunv}

Throughout the paper, $\Omega \subset \R^d$ denotes a bounded smooth domain. We define the parabolic cylinders
\[
    Q_r(x_0,t_0) \coloneqq B_r(x_0)\times (t_0-r^2,t_0] \quad \text{and} \quad Q_r \coloneqq B_r(0)\times (-r^2,0].
\]

We define $C^{2}(\Omega\times(t_1,t_2])$ as the space of functions $v \colon \Omega\times(t_1,t_2] \to\mathbb R$ that are $C^{2}$ in the spatial variables and $C^{1}$ in time. Similarly, we define $C^{1,1}(\Omega\times(t_1,t_2])$ as the space of functions that are $C^{1,1}$ in the spatial variables and $C^{0,1}$ in time. We also define the tail spaces $L^{p-1}_{sp}(\R^d)$ consisting of all functions $w\colon \R^d \to \R$ satisfying
\[
    \|w\|_{L^{p-1}_{sp}(\R^d)} \coloneqq \left(\int_{\R^d} \frac{|w(x)|^{p-1}}{1 + |x|^{d + sp}}\dd x\right)^{\frac{1}{p-1}} < \infty.
\]

We use the notation $\gamma^-$ to denote an arbitrary number strictly smaller than $\gamma$. Throughout the paper, we say that a constant is universal if it depends only on $d$, $p$, and $s$. We usually denote small universal constants by $c$ and large ones by $C$, which may vary from line to line. We may enumerate them when necessary.
 
\subsection{Weak/viscosity solutions}

We introduce the notions of weak and viscosity solution to equation \eqref{eq:parabolic-fractional-p-laplacian}.

\begin{definition}\label{def:weak-sol-definition}
We say that $u\colon \mathbb{R}^d \times (t_1,t_2] \to \mathbb{R}$ is a weak subsolution to \eqref{eq:parabolic-fractional-p-laplacian} in $\Omega \times (t_1,t_2]$, if 
\[
    u \in C_{\mathrm{loc}}\left(t_1,t_2;L^2_{\mathrm{loc}}(\Omega)\right) \cap L^p_{\mathrm{loc}}\left(t_1,t_2;W^{s,p}_{\mathrm{loc}}(\Omega) \right)\cap L_{\mathrm{loc}}^\infty(t_1,t_2;L_{sp}^{p-1}(\R^d))
\]
and for any compact set $\mathcal{K} \subset \Omega$ and any sub-interval $[\tau_1,\tau_2] \subset (t_1,t_2]$ 
\[
\begin{aligned}
&\int_{\mathcal{K}} u(x,t)\varphi(x,t)\,\dd x \bigg|_{\tau_1}^{\tau_2}
 - \int_{\tau_1}^{\tau_2}\int_{\mathcal{K}} u(x,t)\,\partial_t\varphi(x,t)\,\dd x \dd t \\
&\qquad
 + \int_{\tau_1}^{\tau_2} \mathcal{E}(u(\cdot,t),\varphi(\cdot,t))\,\dd t
\le 0,
\end{aligned}
\]
for any nonnegative $\varphi$, such that
\[
   \varphi \in L^p(\tau_1,\tau_2;W^{s,p}_0(\mathcal{K})) \cap H^1(\tau_1,\tau_2;L^2(\mathcal{K})), 
\]
where the quantity $\mathcal{E}(u(\cdot,t),\varphi(\cdot,t))$ is defined as
\[
    \iint_{\R^d \times \R^d} \frac{|u(x,t) - u(y,t)|^{p-2}(u(x,t) - u(y,t))(\varphi(x,t) - \varphi(y,t))}{|x-y|^{d+sp}}\dd y\dd x.
\]
A weak supersolution is defined in a similar way. Furthermore, a weak solution is both a weak subsolution and a supersolution.
\end{definition}

Now we define the notion of viscosity solution.

\begin{definition}\label{def:viscosity-solution}
We say that $u$ is a viscosity subsolution to \eqref{eq:parabolic-fractional-p-laplacian} in $\Omega \times (t_1,t_2]$ if 
\[
    u\in \mathrm{USC}(\Omega\times(t_1,t_2])\cap C_{\mathrm{loc}}(t_1,t_2;L_{sp}^{p-1}(\R^d))
\]
and whenever there is a function $\phi\in C^2(Q_r(x_0,t_0))$ for some $Q_r(x_0,t_0) \subset \Omega \times (t_1,t_2]$, with $\phi$ touching $u$ from above at $(x_0,t_0)$ in $Q_r(x_0,t_0)$, we have 
\[
    \partial_t \phi(x_0,t_0) + (-\Delta_p)^s \phi_r (x_0,t_0)\leq 0,
\] 
where
\[
\phi_r(x,t)=\left\{\begin{array}{ll}
\phi(x,t) & \text{for}\; (x,t)\in Q_r(x_0,t_0),
\\[2mm]
u(x,t) & \text{otherwise}.
\end{array}
\right.
\]
A viscosity supersolution is defined in a similar way. Furthermore, a viscosity solution is both a viscosity subsolution and a supersolution.
\end{definition}

\subsection{Known results and useful estimates}

We begin by recalling the H\"older regularity result from \cite[Theorem 1.1]{L} for weak solutions of \eqref{eq:parabolic-fractional-p-laplacian}, adapted to our setting. 

\begin{theorem}\label{t:NL_Bra_nobo}
Let $u$ be a locally bounded weak solution to 
\[
    \p_t u + (-\Delta_p)^s u = 0
    \qquad \text{in } Q_1.
\]
Set
\[
    \mathcal{M} \coloneqq \|u\|_{L^\infty(Q_{3/4})} + \sup_{t \in (-(3/4)^2,0]} \|u(\cdot,t)\|_{L^{p-1}_{sp}(\R^d)}.
\]
Then there exists a small $\kappa_0\in(0,1)$ such that $u$ is locally $\kappa_0-$H\"older continuous and
\[
    |u(x,t)-u(y,\tau)|\leq C\mathcal{M}\left(|x-y|^{\kappa_0}+\mathcal{M}^{p-2}|t-\tau|^{\kappa_0}\right), 
\]
for $(x,t), (y,\tau) \in Q_{1/2}$.
\end{theorem}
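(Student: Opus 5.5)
Theorem \ref{t:NL_Bra_nobo} is quoted from \cite[Theorem 1.1]{L}. My plan is therefore not to reprove the De Giorgi-type oscillation decay. Instead I would reduce the stated normalized estimate to that result by scaling and covering.

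\textbf{Step 1: normalize.} First assume $\mathcal{M}>0$ and set $v \coloneqq u/\mathcal{M}$. Because the operator is $(p-1)$-homogeneous while $\partial_t$ is linear, $v$ solves
\[
\mathcal{M}^{2-p}\partial_t v + (-\Delta_p)^s v = 0 .
\]
Equivalently, $w(x,t) \coloneqq v(x,\mathcal{M}^{2-p}t)$ is a weak solution of the original equation on the intrinsically rescaled cylinder. In the degenerate range $p>2$ one may, after translation, reduce to $\mathcal{M}\ge 1$, or simply work with the intrinsic cylinders $B_r\times(-\mathcal{M}^{2-p}r^{sp},0]$ as in \cite{L}. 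The normalized function then satisfies $\|w\|_{L^\infty}+\sup_t\|w(\cdot,t)\|_{L^{p-1}_{sp}}\le 1$.

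\textbf{Step 2: apply \cite{L}.} For such normalized solutions, \cite[Theorem 1.1]{L} gives an oscillation decay
\[
\operatorname{osc}_{Q_\rho} w \le C\rho^{\kappa_0}
\]
on nested intrinsic cylinders. The tail contributions are bounded by the $L^{p-1}_{sp}$ norm, which is why the supremum of the tail appears in $\mathcal{M}$. Here $C,\kappa_0$ depend only on $d,p,s$.

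\textbf{Step 3: undo the scaling and cover.} Undoing the time scaling turns the parabolic distance in time into $\mathcal{M}^{p-2}|t-\tau|$. Raising it to the power $\kappa_0$ yields $\mathcal{M}^{(p-2)\kappa_0}|t-\tau|^{\kappa_0}$. Since the statement contains the larger factor $\mathcal{M}^{p-2}$, I would interpret it as written in the adaptation from \cite{L}, up to the harmless choice of normalization $\mathcal{M}\ge1$, for which $\mathcal{M}^{(p-2)\kappa_0}\le \mathcal{M}^{p-2}$. For the covering, take points of $Q_{1/2}$. Cylinders centred at them with radius $1/4$ stay inside $Q_{3/4}$. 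Distant pairs are handled trivially by $2\|u\|_{L^\infty}\le 2\mathcal{M}$.

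\textbf{Main obstacle.} The main obstacle is bookkeeping the intrinsic scaling so that the constant is universal and the powers of $\mathcal{M}$ match. I would check this first by testing the homogeneity of the weak formulation under $u\mapsto \lambda u$ and $t\mapsto \lambda^{2-p}t$.
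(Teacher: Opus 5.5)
Your route (quote \cite[Theorem 1.1]{L}, then adapt it by intrinsic rescaling and covering) is the same as the paper's, which gives nothing beyond the citation. For $\mathcal{M}\ge1$ your bookkeeping is right: $w(x,t)=\mathcal{M}^{-1}u(x,\mathcal{M}^{2-p}t)$ is then normalized on a time interval at least as long as the original one, and undoing the scaling gives the scale-consistent factor $\mathcal{M}^{(p-2)\kappa_0}\le\mathcal{M}^{p-2}$.

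The gap is the assertion that one may reduce to $\mathcal{M}\ge1$. That normalization is not at your disposal, because $\mathcal{M}$ is determined by $u$. Adding a constant to push it above $1$ only replaces $\mathcal{M}$ on the right-hand side by a larger number, so you prove a weaker inequality. The scaling $u\mapsto\lambda u(x,\lambda^{p-2}t)$ with $\lambda=\mathcal{M}^{-1}$ is just your Step 1 again. For $\mathcal{M}<1$ each step breaks:
\begin{itemize}
\item $w$ is only defined for $t\in(-\mathcal{M}^{p-2},0]$, so \cite{L} cannot be applied on unit cylinders.
\item The intrinsic cylinders $B_r\times(-\mathcal{M}^{2-p}r^{sp},0]$ fit in $Q_{3/4}$ only when $r\lesssim\mathcal{M}^{(p-2)/(sp)}$. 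The covering therefore produces a factor $\mathcal{M}^{-(p-2)\kappa_0/(sp)}$ in front of $|x-y|^{\kappa_0}$.
\item Even granting the scaled estimate, $\mathcal{M}^{(p-2)\kappa_0}>\mathcal{M}^{p-2}$, so it does not give the stated factor.
\end{itemize}

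This cannot be repaired: with $C$ depending only on $d,p,s$, the inequality fails as $\mathcal{M}\to0$. Let $U$ solve the equation in $\R^d\times(-2,0]$ with $|U|\le1$ and $U(e,0)\neq U(0,0)$ for some $e$ (e.g.\ the Cauchy problem with a bump as datum at time $-2$). For $\mu\in(0,1)$ put $\rho_\mu^{sp}\coloneqq\mu^{p-2}/2$ and $u_\mu(x,t)\coloneqq\mu\,U(x/\rho_\mu,2t)$.
\begin{itemize}
\item Since $(-\Delta_p)^s$ is $(p-1)$-homogeneous and scales like $\rho^{-sp}$ under dilations, $u_\mu$ solves the equation in $\R^d\times(-1,0]$, with $\mathcal{M}(u_\mu)\le C\mu$.
\item For small $\mu$, both $(\rho_\mu e,0)$ and $(0,0)$ lie in $Q_{1/2}$.
\item Yet $|u_\mu(\rho_\mu e,0)-u_\mu(0,0)|=\mu\,|U(e,0)-U(0,0)|$, whereas the asserted bound is at most $C\mu\,\rho_\mu^{\kappa_0}|e|^{\kappa_0}=o(\mu)$.
\end{itemize}
This is the usual intrinsic-geometry effect for degenerate equations: a unit time gap is intrinsically short when the amplitude is small.

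What your argument does prove is the estimate for $\mathcal{M}\ge1$. For $\mathcal{M}\le1$ you get the bound $C\bigl(|x-y|^{\kappa_0}+|t-\tau|^{\kappa_0}\bigr)$ by running \cite{L} with a fixed reference level $\omega\simeq1$ that dominates the supremum and the tail, rather than $\omega=\mathcal{M}$. This is all the paper actually uses, since Theorem~\ref{t:NL_Bra_nobo} is only applied to normalized functions to obtain \eqref{assumption:bat-ind} with $\kappa=\kappa_0$.
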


Now, we state a useful estimate for the function $J_p \colon \R \to \R$ defined as $J_p(\tau) = |\tau|^{p-2}\tau$.

\begin{lemma}\label{l:Jp_est}
Suppose $p>1$ and $a, b\in \R$. Then 
\begin{align*}
&J_p(a) -J_p(b)= (p-1) \int_{0}^{1} |b+\tau(a-b)|^{p-2} (a-b)  \dd  \tau, \\
\intertext{ and}
&\frac{1}{C_p} (|b|+|a-b|)^{p-2}\leq \int_0^1 |b+\tau (a-b)|^{p-2} \dd \tau\leq C_p (|b|+|a-b|)^{p-2},
\end{align*}
for some constant $C_p$, depending only on $p$. 
\end{lemma}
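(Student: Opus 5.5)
The identity comes from the fundamental theorem of calculus, and the two-sided bound from a case analysis on the segment $\{b+\tau(a-b)\}$.

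\emph{The identity.} For $p>1$ the map $J_p$ is absolutely continuous on $\R$, with $J_p'(\sigma)=(p-1)|\sigma|^{p-2}$ for $\sigma\neq 0$. This derivative is locally integrable, even when $1<p<2$, because $p-2>-1$. Writing
\[
J_p(a)-J_p(b)=\int_0^1 \frac{\dd}{\dd\tau}J_p(b+\tau(a-b))\dd\tau
\]
gives the formula directly. If $a=b$ both sides vanish. Otherwise the segment $\tau\mapsto b+\tau(a-b)$ meets $0$ at most once, so the integrand is defined almost everywhere.

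\emph{Upper bound for $p\ge 2$.} For every $\tau\in[0,1]$ we have $|b+\tau(a-b)|\le |b|+|a-b|$. Since $p-2\ge 0$, the integrand is at most $(|b|+|a-b|)^{p-2}$.

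\emph{Lower bound for $p\ge 2$.} Set $M=|b|+|a-b|$; the case $M=0$ is trivial. I would find a subinterval of $[0,1]$ of length at least $1/4$ on which $|b+\tau(a-b)|\ge M/4$.
\begin{itemize}
\item If $|b|\ge M/2$, then $|a-b|\le |b|$. For $\tau\le 1/4$ this gives $|b+\tau(a-b)|\ge \tfrac34|b|\ge M/4$.
\item If $|b|<M/2$, then $|a-b|>M/2$. The function $\tau\mapsto|b+\tau(a-b)|$ is piecewise linear with slope of absolute value $|a-b|$, so it is below $M/4$ only on an interval of length at most $\frac{M/2}{|a-b|}<1$. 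Checking the position of the zero $\tau_0=-b/(a-b)$ relative to $[0,1]$ shows that at least a quarter of $[0,1]$ remains where $|b+\tau(a-b)|\ge M/4$.
\end{itemize}
Either way the integral is at least $\tfrac14(M/4)^{p-2}$.

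\emph{The case $1<p<2$.} The exponent $p-2$ is negative, so the inequalities swap roles.
\begin{itemize}
\item The lower bound is now the easy one: $|b+\tau(a-b)|\le M$ implies $|b+\tau(a-b)|^{p-2}\ge M^{p-2}$.
\item For the upper bound, split as before.
  \begin{itemize}
  \item If $|b|\ge 2|a-b|$, then $|b+\tau(a-b)|\ge |b|/2\gtrsim M$ for all $\tau$, and the bound follows.
  \item Otherwise $|a-b|\gtrsim M$, and $|b+\tau(a-b)|=|a-b|\,|\tau-\tau_0|$. Then
  \[
  \int_0^1|\tau-\tau_0|^{p-2}\dd\tau\le 2\int_0^1 r^{p-2}\dd r=\frac{2}{p-1},
  \]
  so the integral is at most $\frac{2}{p-1}|a-b|^{p-2}\lesssim M^{p-2}$, using $p-2<0$ and $|a-b|\gtrsim M$.
  \end{itemize}
\end{itemize}

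The main obstacle is this singular case, $1<p<2$ with the segment passing near $0$. It is handled by the integrability of $r^{p-2}$ near the origin, and it is the only place where the constant $C_p$ must blow up as $p\to1$. Since the paper works with $p>2$, only the elementary first case is actually needed.
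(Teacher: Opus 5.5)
The paper does not prove this lemma; it quotes it as a standard elementary estimate and only uses it with $p>2$, so there is no argument of the authors to compare against. Your proof is a correct self-contained verification, apart from one slip noted below. The identity is right: it follows from $J_p(\sigma)=\int_0^\sigma (p-1)|r|^{p-2}\dd r$, which is valid for every $p>1$ because $p-2>-1$, together with the affine change of variables $r=b+\tau(a-b)$. The one-sided bounds that come from $|b+\tau(a-b)|\le M$ are immediate in both ranges of $p$. The singular upper bound for $1<p<2$, obtained by writing $|b+\tau(a-b)|=|a-b|\,|\tau-\tau_0|$ and using $\int_0^1|\tau-\tau_0|^{p-2}\dd\tau\le 2/(p-1)$, is also correct.

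The slip is in the lower bound for $p\ge 2$, in the case $|b|<M/2$. There, the set of $\tau\in[0,1]$ where $|b+\tau(a-b)|\ge M/4$ need not have measure $1/4$. The bad set is an interval centred at $\tau_0$ with half-width $M/(4|a-b|)=(1+|\tau_0|)/4$. For $\tau_0=3/5$, for instance $b=-3/5$, $a=2/5$, $M=8/5$, the bad interval is $(1/5,1)$, so the good set in $[0,1]$ is only $[0,1/5]\cup\{1\}$. The true minimum of the good measure over $\tau_0$ is $1/5$, so your bound should read $\frac15 (M/4)^{p-2}$. A cleaner alternative is to use the threshold $M/8$: the half-width becomes $(1+|\tau_0|)/8<1/4$, so at least half of $[0,1]$ survives. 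Since $C_p$ is unspecified, the conclusion is unaffected.
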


Finally, we recall the following result from \cite[Lemma 3.6]{KKL}.

\begin{lemma}\label{l:Korvenpaa-Kuusi-Lindgren}
Let $B_\varepsilon(x) \subset D \Subset \Omega$, $p\geq 2$ and $w \in C^2(D)$. Then
\[
    \left| \mathrm{P.V.} \int_{B_\varepsilon(x)} 
    \frac{J_p(w(x)-w(y))}{|x-y|^{d+sp}} \dd y \right|
    \le c_\varepsilon,
\]
where $c_\varepsilon$ is independent of $x$ and $c_\varepsilon \to 0$ as $\varepsilon \to 0$.
\end{lemma}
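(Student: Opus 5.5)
We need to prove Lemma \ref{l:Korvenpaa-Kuusi-Lindgren}: for $w\in C^2(D)$, $p\ge 2$, the principal value integral over $B_\varepsilon(x)$ is bounded by some $c_\varepsilon\to0$, uniformly in $x$.

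The plan is to symmetrize the principal value and then Taylor expand $w$ to second order. Fix a compact $D'$ with $B_\varepsilon(x)\subset D'\subset D$, for instance the closure of $D$ intersected with a slightly smaller neighbourhood, and set $M_1=\sup_{D'}|\nabla w|$ and $M_2=\sup_{D'}|D^2 w|$. These are finite because $w\in C^2$ up to the compact set, which is the sense in which I read $w\in C^2(D)$ with $D\Subset\Omega$. Writing $y=x+h$ and averaging the contributions of $h$ and $-h$, the truncated integral over $\delta<|h|<\varepsilon$ equals
\[
\frac12\int_{\delta<|h|<\varepsilon}\frac{J_p(w(x)-w(x+h))+J_p(w(x)-w(x-h))}{|h|^{d+sp}}\dd h .
\]
It suffices to bound the integrand by an integrable function independent of $x$ and $\delta$. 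This shows the principal value exists, and it also gives the bound.

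Set $a=w(x)-w(x+h)$ and $b=-(w(x)-w(x-h))$. Then $J_p(a)+J_p(-b)=J_p(a)-J_p(b)$, because $J_p$ is odd. By Lemma \ref{l:Jp_est},
\[
|J_p(a)-J_p(b)|\le C_p\,(|b|+|a-b|)^{p-2}|a-b|.
\]
Taylor's theorem gives $|a|,|b|\le M_1|h|$. It also gives $|a-b|=|2w(x)-w(x+h)-w(x-h)|\le M_2|h|^2$. Using $p\ge2$, the integrand is therefore bounded by
\[
C_p\,(M_1|h|+M_2|h|^2)^{p-2}M_2|h|^2\,|h|^{-d-sp}\le C\,|h|^{p-d-sp},
\]
where $C$ depends on $M_1$, $M_2$, $p$ and $\varepsilon\le1$ (say). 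Since $p(1-s)>0$, the function $|h|^{p-sp-d}$ is integrable near the origin.

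Integrating in polar coordinates gives
\[
c_\varepsilon=C\,|\mathbb S^{d-1}|\,\frac{\varepsilon^{p(1-s)}}{p(1-s)}\longrightarrow0 \quad\text{as } \varepsilon\to0,
\]
and this constant is independent of $x$. The only point needing care is that the constants $M_1$ and $M_2$ are uniform. I would handle this by working on a fixed compact subset of $D$ containing all the balls in question. The remaining step is justifying the passage $\delta\to0$, which is dominated convergence using the integrable bound above.
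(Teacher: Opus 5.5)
The paper gives no proof of this lemma and simply quotes \cite[Lemma 3.6]{KKL}. Your argument is the standard proof of that cited result and is correct: symmetrize the principal value, then apply Lemma \ref{l:Jp_est} with $|b|\le M_1|h|$ and $|a-b|=|2w(x)-w(x+h)-w(x-h)|\le M_2|h|^2$. This dominates the integrand by $C|h|^{p-d-sp}$ and gives $c_\varepsilon=C\varepsilon^{p(1-s)}$.

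One small precision concerns uniformity in $x$. A compact subset of $D$ containing every ball $B_\varepsilon(x)\subset D$ need not exist. So uniformity really rests on your reading that $\nabla w$ and $D^2w$ are bounded on $D$, which the statement implicitly requires, and $c_\varepsilon$ then depends on those bounds.
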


\section{Properties of weak and viscosity solutions}\label{s:properties-solutions}

In this section, we establish comparison principles for both weak and viscosity solutions and use them to prove the equivalence between these two notions of solution.

\subsection{Comparison principles}

We start with the comparison principle for weak solutions.

\begin{theorem}\label{t:comparison-weak}
Let $u$ be a weak supersolution and $v$ be weak subsolution of \eqref{eq:parabolic-fractional-p-laplacian} in $\Omega \times (t_1,t_2]$. Let $\mathcal{K}\Subset \Omega$ and $[\tau_1,\tau_2]\subset (t_1,t_2]$. Assume that
\[
    u \ge v, \quad \text{a.e. in } \bigl(\R^d\setminus \mathcal{K} \bigr) \times (\tau_1,\tau_2] \text{ and on } \mathcal{K} \times \{\tau_1\}.
\]
Then
\[
    u \ge v, \quad \text{a.e. in } \mathcal{K} \times (\tau_1,\tau_2].
\]
\end{theorem}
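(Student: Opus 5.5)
We test the two weak inequalities against the positive part $w \coloneqq (v-u)_+$ and run an energy argument. Subtracting the supersolution inequality for $u$ from the subsolution inequality for $v$, both tested with the same nonnegative $\varphi$, gives for every $[\tau_1,t]\subset[\tau_1,\tau_2]$
\[
\int_{\mathcal K}(v-u)\varphi\,\dd x\Big|_{\tau_1}^{t}-\int_{\tau_1}^{t}\!\!\int_{\mathcal K}(v-u)\partial_t\varphi\,\dd x\dd \tau+\int_{\tau_1}^{t}\bigl[\mathcal E(v,\varphi)-\mathcal E(u,\varphi)\bigr]\dd \tau\le 0 .
\]
Since $u\ge v$ a.e. outside $\mathcal K$, the function $w(\cdot,\tau)$ vanishes a.e. in $\R^d\setminus\mathcal K$. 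Because $u,v\in L^p_{\mathrm{loc}}(W^{s,p}_{\mathrm{loc}}(\Omega))$, it follows that $w\in L^p(\tau_1,\tau_2;W^{s,p}_0(\mathcal K'))$ for a slightly larger compact set $\mathcal K\Subset\mathcal K'\Subset\Omega$. We therefore work on $\mathcal K'$, which is admissible in the definition.

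The difficulty is that $w$ need not belong to $H^1(L^2)$. We handle this with a Steklov average (or an exponential mollification in time) $w_h$. Using $\varphi=w_h$ and passing to the limit $h\to0$ in the standard way yields the identity
\[
\int_{\mathcal K'}(v-u)\,w_h\,\dd x\Big|_{\tau_1}^{t}-\iint(v-u)\,\partial_t w_h \;\longrightarrow\; \tfrac12\int_{\mathcal K'} w^2(x,t)\,\dd x-\tfrac12\int_{\mathcal K'}w^2(x,\tau_1)\,\dd x .
\]
To justify this limit we use $u,v\in C(L^2_{\mathrm{loc}})$ and the convexity inequality $\tfrac12 a_+^2-\tfrac12 b_+^2\le a_+(a-b)$, applied to the Steklov-averaged equation, exactly as in the local parabolic $p$-Laplacian theory. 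The initial term vanishes because $w(\cdot,\tau_1)=0$. The energy terms pass to the limit since $J_p(v(x)-v(y))-J_p(u(x)-u(y))$ lies in $L^{p'}$ against the kernel on $\mathcal K'\times\R^d$. Far-away interactions are controlled by the tail bound $L^\infty(L^{p-1}_{sp})$ together with the fact that $w$ has compact support.

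The key step is the monotonicity of the nonlocal term:
\[
\mathcal E(v,w)-\mathcal E(u,w)=\iint\frac{\bigl(J_p(v(x)-v(y))-J_p(u(x)-u(y))\bigr)\bigl(w(x)-w(y)\bigr)}{|x-y|^{d+sp}}\dd y\dd x\ \ge 0 .
\]
Write $a=v(x)-v(y)$ and $b=u(x)-u(y)$, so that $a-b=(v-u)(x)-(v-u)(y)$. Since $r\mapsto r_+$ is monotone, $\bigl((v-u)(x)-(v-u)(y)\bigr)\bigl(w(x)-w(y)\bigr)\ge 0$. Moreover, Lemma \ref{l:Jp_est} shows that $J_p(a)-J_p(b)$ equals $(a-b)$ times a nonnegative factor. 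Hence the integrand is pointwise nonnegative, and this holds globally on $\R^d\times\R^d$. In particular, no smallness of the region is needed.

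Combining the three steps gives $\tfrac12\int_{\mathcal K'}w^2(x,t)\,\dd x\le 0$ for a.e. $t\in(\tau_1,\tau_2]$. Therefore $w=0$, that is, $u\ge v$ a.e. in $\mathcal K\times(\tau_1,\tau_2]$.

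We expect the main obstacle to be the time regularization. The test function must be admissible, which requires $H^1(L^2)$ regularity in time and compact support in space. We must also make sure that mollifying in time preserves the sign and support of $w$, and that the nonlocal term converges when it is tested against $w_h$ rather than $w$. We would handle this with the exponential time mollifier $\varphi_h(t)=\frac1h\int_{\tau_1}^t e^{(s-t)/h}w(s)\,\dd s$. It keeps the zero initial value, satisfies $\partial_t\varphi_h=(w-\varphi_h)/h$, and therefore gives the one-sided inequality $\iint (v-u)\,\partial_t\varphi_h\le\iint w\,\partial_t\varphi_h$ with the correct sign directly.
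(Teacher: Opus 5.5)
Your argument follows the paper's proof. You test both inequalities with $(v-u)_+$, show $\mathcal{E}(v,w)-\mathcal{E}(u,w)\ge 0$, and conclude from the energy identity that $\|w(\cdot,t)\|_{L^2}\le\|w(\cdot,\tau_1)\|_{L^2}=0$. Your pointwise monotonicity argument (Lemma \ref{l:Jp_est} plus the fact that $r\mapsto r_+$ is nondecreasing) is a slightly cleaner form of the paper's splitting into $U_t^+$ and $U_t^-$. The paper, like you, only points to Steklov averages for the time regularization.

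Your last paragraph, however, is wrong: the forward exponential mollifier does not behave as you claim.
\begin{itemize}
\item The inequality is reversed. On $\{v<u\}$ you have $w=0$, so $\partial_t\varphi_h=-\varphi_h/h\le 0$ and $(v-u)\,\partial_t\varphi_h\ge 0=w\,\partial_t\varphi_h$. Hence $\iint (v-u)\partial_t\varphi_h\ge\iint w\,\partial_t\varphi_h$, the opposite of what you need.
\item Even with $w$ in place of $v-u$, writing $w=\varphi_h+h\,\partial_t\varphi_h$ gives
\[
-\int_{\tau_1}^{t}\!\!\int w\,\partial_t\varphi_h\dd x\dd \tau=-\tfrac12\int\varphi_h^2(x,t)\dd x-h\int_{\tau_1}^{t}\!\!\int|\partial_t\varphi_h|^2\dd x\dd \tau .
\]
This bounds the parabolic term from above rather than from below. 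The error term $h\|\partial_t\varphi_h\|_{L^2}^2=h^{-1}\|w-\varphi_h\|_{L^2}^2$ need not vanish when $w$ is merely continuous in time.
\end{itemize}

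The test function has to be mollified backward in time. Equivalently, mollify the equation forward and test with $([v-u]_h)_+$ times a time cutoff.

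Alternatively, use the backward Steklov route you already sketched, which does give the right sign. Apply the weak formulation on $[t-h,t]$ with the time-independent test function $w(\cdot,t)$ and divide by $h$. Then use $\tfrac12a_+^2-\tfrac12b_+^2\le a_+(a-b)$ with $a=(v-u)(t)$ and $b=(v-u)(t-h)$; this bounds the parabolic term from below by $\tfrac1{2h}\int\bigl(w^2(t)-w^2(t-h)\bigr)$. Integrating in $t$ and using $C(L^2_{\mathrm{loc}})$ continuity then gives $\tfrac12\int w^2(\cdot,t)\,\dd x\le 0$.
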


\begin{proof}
Let $\varphi(x,t) \coloneqq \max\{v(x,t)-u(x,t),0\}$. Let $\bar t \in (\tau_1,\tau_2)$. Since $u \geq v$ in $\bigl(\R^d\setminus \mathcal{K} \bigr) \times (\tau_1,\tau_2]$, we have $\varphi \in L^p(\tau_1,\bar t;W^{s,p}_0(\mathcal{K}))$.  Showing that $\varphi \in H^1(\tau_1,\bar t;L^2(\mathcal{K}))$ is more delicate, but a rigorous justification of this step can be obtained using Steklov averages; see \cite[Appendix B]{L}. Testing the equations for $u$ and $v$ with $\varphi$, we obtain
\[
\begin{aligned}
&\int_{\mathcal{K}} u(x,t)\varphi(x,t)\,\dd x \bigg|_{\tau_1}^{\bar t}
 - \int_{\tau_1}^{\bar t}\int_{\mathcal{K}} u(x,t)\,\partial_t\varphi(x,t)\,\dd x \dd t \\
&\qquad
 + \int_{\tau_1}^{\bar t} \mathcal{E}(u(\cdot,t),\varphi(\cdot,t))\,\dd t
\ge 0.
\end{aligned}
\]
and
\[
\begin{aligned}
&\int_{\mathcal{K}} v(x,t)\varphi(x,t)\,\dd x \bigg|_{\tau_1}^{\bar t}
 - \int_{\tau_1}^{\bar t}\int_{\mathcal{K}} v(x,t)\,\partial_t\varphi(x,t)\,\dd x \dd t \\
&\qquad
 + \int_{\tau_1}^{\bar t} \mathcal{E}(v(\cdot,t),\varphi(\cdot,t))\,\dd t
\le 0,
\end{aligned}
\]
where the quantity $\mathcal{E}(\cdot,\cdot)$ is as in Definition \ref{def:weak-sol-definition}.

We first show that, for a.e. $t \in [\tau_1,\bar t]$, we have
\begin{equation}\label{ineq:energy-nonpositive}
    \mathcal{E}(u(\cdot,t),\varphi(\cdot,t)) - \mathcal{E}(v(\cdot,t),\varphi(\cdot,t)) \leq 0.
\end{equation}
Define
\[
    U_t^+ \coloneqq \{x \in \R^d \colon v(x,t) > u(x,t)\} \quad \text{and} \quad U_t^- \coloneqq \{x \in \R^d \colon v(x,t) \leq u(x,t)\},
\]
and notice that $\varphi = v-u$ in $U_t^+$, and $\varphi = 0$ in $U_t^-$. We can then write the LHS of \eqref{ineq:energy-nonpositive} as
\begin{align*}
& \int_{U_t^+}\int_{U_t^+}
\bigl(J_p(u(x,t)-u(y,t)) - J_p(v(x,t)-v(y,t))\bigr)\bigl(\varphi(x,t)-\varphi(y,t)\bigr)\,d\mu\\
&+ 2\int_{U_t^-}\int_{U_t^+}
\bigl(J_p(u(x,t)-u(y,t)) - J_p(v(x,t)-v(y,t))\bigr)\varphi(x,t)\,d\mu
\end{align*}
where $d\mu = |x-y|^{-d-sp}\dd x\dd y$. Each of the terms above is nonpositive. To see this, recall first that, for every $a,b\in\R$, we have
\[
    (J_p(a) - J_p(b))(a-b) \geq 0.
\]
Therefore, for $x,y \in U_t^+$, we have
\[
    \int_{U_t^+}\int_{U_t^+}
\bigl(J_p(u(x,t)-u(y,t)) - J_p(v(x,t)-v(y,t))\bigr)\bigl(\varphi(x,t)-\varphi(y,t)\bigr)\,d\mu \leq 0.
\]
Now, if $x \in U_t^+$ and $y \in U_t^-$, $J_p(u(x,t) - u(y,t)) \leq J_p(v(x,t) - v(y,t))$, which gives
\[  
    \int_{U_t^-}\int_{U_t^+}
\bigl(J_p(u(x,t)-u(y,t)) - J_p(v(x,t)-v(y,t))\bigr)\varphi(x,t)\,d\mu \leq 0.
\]
and thus \eqref{ineq:energy-nonpositive} holds. Subtracting the super and subsolution inequalities and using \eqref{ineq:energy-nonpositive}, we obtain
\begin{align*}
    0 & \leq \int_{\mathcal{K}}u(x,t)\varphi(x,t)\dd x \bigg|_{\tau_1}^{\bar t} - \int_{\tau_1}^{\bar t}\int_{\mathcal{K}}u(x,t) \partial_t \varphi(x,t)\dd x\dd t\\
    & \quad - \int_{\mathcal{K}}v(x,t)\varphi(x,t)\dd x \bigg|_{\tau_1}^{\bar t}  + \int_{\tau_1}^{\bar t}\int_{\mathcal{K}}v(x,t) \partial_t \varphi(x,t)\dd x\dd t\\
    & = \int_{\mathcal{K}}(u(x,t) - v(x,t))\varphi(x,t)\dd x \bigg|_{\tau_1}^{\bar t} + \int_{\tau_1}^{\bar t}\int_{\mathcal{K}}(v(x,t)-u(x,t)) \partial_t \varphi(x,t)\dd x\dd t\\
    & = -\int_{\mathcal{K}}\varphi^2(x,t)\dd x \bigg|_{\tau_1}^{\bar t} + \frac{1}{2}\int_{\tau_1}^{\bar t}\int_{\mathcal{K}}\partial_t(\varphi^2(x,t))\dd x\dd t,
\end{align*}
which gives
\[
    \|\varphi(\cdot,\bar t)\|_{L^2(\mathcal{K})} \leq \|\varphi(\cdot,\tau_1)\|_{L^2(\mathcal{K})}.
\]
Since at $\tau_1$ we have $\varphi(x,\tau_1) = 0$ for all $x \in \mathcal{K}$, we get $\varphi(\cdot,\bar t) \equiv 0$ in $\mathcal{K}$. Since $\bar t$ is arbitrary, this implies $\varphi \equiv 0$ in $\mathcal{K} \times (\tau_1,\tau_2]$, from which follows $u \geq v$ a.e. in $\mathcal{K} \times (\tau_1,\tau_2]$.
\end{proof}

Next, we obtain the comparison principle for viscosity solutions. First, we prove a lemma stating that if we can touch the solution with a sufficiently smooth test function, then $(-\Delta_p)^s u$ is well-defined at the touching point and can be used in the equation satisfied by the test function.

\begin{lemma}\label{l:pointwise-equation}
Let \(u\) be a viscosity subsolution (respectively, supersolution) of \eqref{eq:parabolic-fractional-p-laplacian} in \(\Omega \times (t_1,t_2]\). Assume that \(\varphi \in C^2(Q_r(x_0,t_0))\), where
\[
    Q_r(x_0,t_0) \subset \Omega \times (t_1,t_2],
\]
and that $\varphi$ touches \(u\) from above (respectively, below) at \((x_0,t_0)\) in \(Q_r(x_0,t_0)\). Then
\[
    \partial_t \varphi(x_0,t_0) + (-\Delta_p)^s u(x_0,t_0) \le (\ge)\,0.
\]
\end{lemma}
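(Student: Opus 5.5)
We argue in the spirit of \cite[Lemma 3.8]{KKL}: shrink the region where the test function is used, and pass to the limit in the viscosity inequality. We treat the subsolution case; the supersolution case is symmetric. For $0<\rho<r$, define the patched functions $\varphi_\rho$ equal to $\varphi$ in $Q_\rho(x_0,t_0)$ and to $u$ otherwise. Since $\varphi\in C^2(Q_\rho(x_0,t_0))$ still touches $u$ from above at $(x_0,t_0)$ in $Q_\rho(x_0,t_0)\subset Q_r(x_0,t_0)$, Definition \ref{def:viscosity-solution} gives
\[
    \partial_t\varphi(x_0,t_0) + (-\Delta_p)^s\varphi_\rho(x_0,t_0) \le 0 \qquad\text{for every } \rho\in(0,r).
\]
At time $t_0$, the function $\varphi_\rho(\cdot,t_0)$ coincides with $\varphi(\cdot,t_0)$ on $B_\rho(x_0)$ and with $u(\cdot,t_0)$ outside it. Moreover, $\varphi_\rho(x_0,t_0)=u(x_0,t_0)$. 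It therefore remains to show that $(-\Delta_p)^s u(x_0,t_0)$ is well defined, with values in $[-\infty,\infty)$ a priori, and that it is bounded above by $\liminf_{\rho\to0}(-\Delta_p)^s\varphi_\rho(x_0,t_0)$.

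Split the operator into the regions $B_\rho(x_0)$ and $\R^d\setminus B_\rho(x_0)$. Outside $B_\rho(x_0)$ the integrands for $\varphi_\rho$ and for $u$ agree exactly. Inside $B_\rho(x_0)$, Lemma \ref{l:Korvenpaa-Kuusi-Lindgren} applied to $w=\varphi(\cdot,t_0)\in C^2$ shows that the principal value of the integral of $J_p(\varphi(x_0,t_0)-\varphi(y,t_0))|x_0-y|^{-d-sp}$ has absolute value at most $c_\rho$, with $c_\rho\to0$. Hence
\[
    \int_{\R^d\setminus B_\rho(x_0)} \frac{J_p(u(x_0,t_0)-u(y,t_0))}{|x_0-y|^{d+sp}}\dd y \le -\partial_t\varphi(x_0,t_0) + c_\rho .
\]
The left-hand side is finite for each $\rho$, because $u(\cdot,t_0)\in L^{p-1}_{sp}(\R^d)$; this is where the assumption $u\in C_{\mathrm{loc}}(t_1,t_2;L^{p-1}_{sp})$ enters, since it makes the tail at the fixed time $t_0$ meaningful.

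The main obstacle is showing that the limit as $\rho\to0$ exists, so that $(-\Delta_p)^s u(x_0,t_0)$ is genuinely defined rather than merely bounded by a $\liminf$. We use the touching from above. Since $u(y,t_0)\le\varphi(y,t_0)$ on $B_r(x_0)$ with equality at $x_0$, and $J_p$ is increasing, we have
\[
    J_p(u(x_0,t_0)-u(y,t_0)) \ge J_p(\varphi(x_0,t_0)-\varphi(y,t_0)) \quad\text{for } y\in B_r(x_0).
\]
Consequently the difference
\[
    \frac{J_p(u(x_0,t_0)-u(y,t_0))-J_p(\varphi(x_0,t_0)-\varphi(y,t_0))}{|x_0-y|^{d+sp}}
\]
is nonnegative on $B_r(x_0)$. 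Its integral over the annulus $B_r\setminus B_\rho$ is monotone in $\rho$ and converges as $\rho\to0$ to an element of $[0,\infty]$ by monotone convergence. Adding back the convergent principal value of the $\varphi$-integrand, again via Lemma \ref{l:Korvenpaa-Kuusi-Lindgren}, and the fixed tail over $\R^d\setminus B_r$, the principal value of $(-\Delta_p)^s u(x_0,t_0)$ exists in $(-\infty,+\infty]$.

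Passing to the limit in the displayed inequality then shows that this limit is at most $-\partial_t\varphi(x_0,t_0)<\infty$. It follows that the limit is finite, the nonnegative difference above is absolutely integrable near $x_0$, and
\[
    \partial_t\varphi(x_0,t_0)+(-\Delta_p)^s u(x_0,t_0)\le 0 .
\]
For supersolutions, all inequalities reverse.
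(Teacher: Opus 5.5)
Your proof is correct and follows essentially the same route as the paper. You apply the viscosity inequality to the patched functions $\varphi_\rho$ on shrinking cylinders $Q_\rho(x_0,t_0)$, observe that the integrands coincide outside $B_\rho(x_0)$, and use Lemma \ref{l:Korvenpaa-Kuusi-Lindgren} so that the inner-ball contribution vanishes as $\rho\to0$.

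Your extra monotone-convergence step, based on $J_p(u(x_0,t_0)-u(y,t_0))\ge J_p(\varphi(x_0,t_0)-\varphi(y,t_0))$ on $B_r(x_0)$, is a useful addition. It justifies that the principal value defining $(-\Delta_p)^s u(x_0,t_0)$ actually exists, a point the paper passes over. One small slip: for subsolutions the a priori range is $(-\infty,+\infty]$, as you state later, not $[-\infty,\infty)$ as your first paragraph says.
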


\begin{proof}
We prove the result for subsolutions since the other case is identical. For each \(0<\rho<r\), consider
\[
    \varphi_\rho(x,t) =
    \begin{cases}
    \varphi(x,t) & \text{if } (x,t)\in Q_\rho(x_0,t_0),\\[2mm]
    u(x,t) & \text{otherwise}.
    \end{cases}
\]
Since \(u\) is a viscosity subsolution of \eqref{eq:parabolic-fractional-p-laplacian}, we have
\[
    \partial_t\varphi(x_0,t_0)+(-\Delta_p)^s\varphi_\rho(x_0,t_0)\le 0.
\]
We decompose \( (-\Delta_p)^s\varphi_\rho(x_0,t_0)\) as the $\mathrm{P.V.}$ of the integral over $B_\rho(x_0)$ and outside. By Lemma \ref{l:Korvenpaa-Kuusi-Lindgren}, the integral over $B_\rho(x_0)$ goes to zero as $\rho\to 0$. Therefore, in the limit, we obtain
\[
    \partial_t\varphi(x_0,t_0)+\mathrm{P.V.}\int_{\R^d}\frac{J_p\!\big(u(x_0,t_0)-u(y,t_0)\big)}{|\,x_0-y\,|^{d+sp}}\dd y\le 0.
\]
This completes the proof. 
\end{proof}

We next establish the comparison principle for viscosity supersolutions and subsolutions to \eqref{eq:parabolic-fractional-p-laplacian}. 

\begin{theorem}\label{t:comparison-viscosity}
Let \(u\) and \(v\) be a viscosity supersolution and a viscosity subsolution, respectively, of \eqref{eq:parabolic-fractional-p-laplacian} in \(\Omega \times (t_1,t_2]\). Assume, in addition, that 
\[
 -u,v \in \mathrm{USC} (\overline{\Omega}\times[t_1,t_2]).
\]
If
\begin{align*}
    &u\geq v \text{ on } \p\Omega\times(t_1,t_2] \text{ and on } \Omega \times \{t_1\}\\
    \intertext{and}
    &u \ge v, \quad \text{a.e. in } \bigl(\R^d\setminus \Omega \bigr) \times (t_1,t_2],
\end{align*}
then
\[
    u \ge v \quad \text{in } \Omega \times (t_1,t_2].
\]
\end{theorem}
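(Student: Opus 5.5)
We argue by contradiction, using a strict perturbation in time followed by doubling of the space and time variables. This reduces the problem to comparing two nonlocal operators at nearby points, in the spirit of \cite{KKL}.

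\emph{Step 1: strict perturbation.} Suppose $\sup_{\Omega\times(t_1,t_2]}(v-u)>0$. For small $\varepsilon>0$ set $v_\varepsilon(x,t)\coloneqq v(x,t)-\varepsilon/(t_2-t+\varepsilon)$, or simply $v-\varepsilon(t-t_1)$. Then $v_\varepsilon$ is a strict viscosity subsolution: any test function touching $v_\varepsilon$ from above gives $\partial_t\phi+(-\Delta_p)^s\phi_r\le -\varepsilon$. This holds because the correction depends only on $t$, so it leaves all spatial differences, and hence the nonlocal operator, unchanged. For $\varepsilon$ small we still have $\sup(v_\varepsilon-u)=:\theta>0$. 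By upper semicontinuity of $v_\varepsilon-u$ on the compact set $\overline\Omega\times[t_1,t_2]$, this supremum is attained. The boundary and initial hypotheses force every maximum point to lie in $\Omega\times(t_1,t_2]$.

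\emph{Step 2: doubling.} Consider
\[
\Phi_j(x,y,t,\tau)=v_\varepsilon(x,t)-u(y,\tau)-\tfrac{j}{4}|x-y|^4-\tfrac{j}{2}(t-\tau)^2 .
\]
The quartic penalization is chosen because $p>2$: its Hessian vanishes at $x=y$, which avoids the singular-gradient issue in the nonlocal $p$-Laplacian. Let $(x_j,y_j,t_j,\tau_j)$ be maximum points. By the standard argument, $j|x_j-y_j|^4+j(t_j-\tau_j)^2\to0$, and the points accumulate at an interior maximum of $v_\varepsilon-u$, with $v_\varepsilon(x_j,t_j)\to v_\varepsilon(\hat x,\hat t)$ and $u(y_j,\tau_j)\to u(\hat x,\hat t)$. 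Freezing $(y_j,\tau_j)$, the function $\phi(x,t)=u(y_j,\tau_j)+\tfrac j4|x-y_j|^4+\tfrac j2(t-\tau_j)^2+\text{const}$ touches $v_\varepsilon$ from above at $(x_j,t_j)$. Symmetrically, a test function touches $u$ from below at $(y_j,\tau_j)$, and the two time derivatives coincide, both being equal to $j(t_j-\tau_j)$. Lemma \ref{l:pointwise-equation} then yields
\[
(-\Delta_p)^s v_\varepsilon(x_j,t_j)\le -\varepsilon - j(t_j-\tau_j),\qquad (-\Delta_p)^s u(y_j,\tau_j)\ge -j(t_j-\tau_j).
\]
Subtracting gives
\[
(-\Delta_p)^s v_\varepsilon(x_j,t_j)-(-\Delta_p)^s u(y_j,\tau_j)\le-\varepsilon .
\]

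\emph{Step 3: contradiction (main obstacle).} The remaining task is to show that the left-hand side is bounded below by a quantity whose liminf is at least $0$. Write it as an integral in $z$ of
\[
J_p\big(v_\varepsilon(x_j,t_j)-v_\varepsilon(x_j+z,t_j)\big)-J_p\big(u(y_j,\tau_j)-u(y_j+z,\tau_j)\big)
\]
against $|z|^{-d-sp}$, using the same shift $z$ at both points.

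\begin{itemize}
\item Near $z=0$: maximality of $\Phi_j$ in $(x,y)$ with $t,\tau$ frozen gives $v_\varepsilon(x_j+z,t_j)-u(y_j+z,\tau_j)\le v_\varepsilon(x_j,t_j)-u(y_j,\tau_j)$. Since $J_p$ is monotone, the integrand is then nonnegative for every $z$ with $x_j+z\in\Omega$.

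\item Outside $\Omega$: we use $u\ge v$ there. However, $v_\varepsilon(\cdot,t_j)$ and $u(\cdot,\tau_j)$ are evaluated at different times and at shifted points, so this hypothesis does not apply directly. Here I would invoke the assumption $u,v\in C_{\mathrm{loc}}(t_1,t_2;L^{p-1}_{sp}(\R^d))$. Since $|t_j-\tau_j|\to0$ and $|x_j-y_j|\to0$, translation continuity in $L^{p-1}_{sp}$ shows that the tail integrands converge in $L^1$ (weighted) to the corresponding quantity at the common limit point $(\hat x,\hat t)$. There the integrand is nonnegative, because $v_\varepsilon-u$ attains its maximum at $(\hat x,\hat t)$ and $u\ge v\ge v_\varepsilon$ a.e. outside $\Omega$.
\end{itemize}

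The delicate part is the principal value near $z=0$. I would use the nonnegativity pointwise for $|z|<\delta$ and apply Fatou's lemma to the nonnegative part. This gives
\[
\liminf_{j\to\infty}\Big[(-\Delta_p)^s v_\varepsilon(x_j,t_j)-(-\Delta_p)^s u(y_j,\tau_j)\Big]\ge0,
\]
which contradicts the bound $\le-\varepsilon$ from Step 2. Hence $v\le u$ in $\Omega\times(t_1,t_2]$.
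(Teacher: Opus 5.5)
Your argument is correct, but it closes the proof differently from the paper.

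\textbf{The paper's route.} The paper does not perturb $v$. It doubles variables with quadratic penalties and shows that the exterior contribution, over the set $F^\varepsilon$ where $x_\varepsilon+z$ or $y_\varepsilon+z$ leaves $\Omega$, has nonnegative $\liminf$; it does this by splitting according to the sign of $W_\varepsilon$ and killing the negative part by dominated convergence. Since the total is $\le 0$, it then deduces $W_\varepsilon\to 0$ a.e.\ on $E_{x^\ast}$, which propagates the maximum: $v-u\equiv M$ on $\Omega\times\{t^\ast\}$. The contradiction comes only after letting $\xi\to\partial\Omega$, using upper semicontinuity up to the boundary and the lateral data a second time.

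\textbf{Your route.} The strict perturbation $v-\varepsilon(t-t_1)$ is legitimate. A function of $t$ alone leaves spatial increments, and hence the nonlocal operator, unchanged. It also keeps $v_\varepsilon$ in $C_{\mathrm{loc}}(t_1,t_2;L^{p-1}_{sp}(\R^d))$ and below $u$ on the parabolic boundary and outside $\Omega$. It produces the gap $-\varepsilon$, so you only need $\liminf\ge 0$, and the propagation step disappears.

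Your exterior argument is also more direct than the paper's sign splitting. You get $L^1(B_{2\rho}^c,|z|^{-d-sp}\dd z)$ convergence of the integrand from time continuity plus translation continuity in $L^{p-1}_{sp}$. Combined with a.e.\ convergence of the indicators of the $j$-dependent exterior sets (since $\partial\Omega$ is null), this passes to the limit. In fact it gives a strictly positive limit. On $\{z\colon \hat x+z\notin\Omega\}$, a set of positive measure, the increment difference is at least $\theta>0$, so the limiting integrand is positive a.e.\ there. With this exterior argument, neither your perturbation nor the paper's propagation step is actually needed.

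\textbf{Minor points.}
\begin{itemize}
\item The nonnegativity coming from maximality of $\Phi_j$ requires both $x_j+z$ and $y_j+z$ in $\overline{\Omega}$, not just $x_j+z\in\Omega$.
\item Nothing is delicate near $z=0$. The integrand is nonnegative on that whole interior set for each $j$, and Lemma \ref{l:pointwise-equation} already guarantees the principal values exist, so no Fatou argument is required.
\item The quartic penalization buys nothing over the quadratic one for $p>2$, since Lemma \ref{l:pointwise-equation} accepts any $C^2$ test function.
\end{itemize}
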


\begin{proof}
The argument to obtain this result is the parabolic analog of \cite[Theorem 4.1]{KKL}. The subtle difference is in the treatment of the limits with respect to the time variable when we are outside $\Omega$. However, we overcome this difficulty by assuming $u,v \in C_{\rm loc}\bigl(t_1,t_2;L^{p-1}_{sp}(\mathbb R^d)\bigr)$ in the Definition \ref{def:viscosity-solution}. 

Set
\[
    M\coloneqq \sup_{\Omega\times(t_1,t_2]}(v-u).
\]
Assume, seeking a contradiction, that \(M>0\). By the parabolic boundary ordering, there exists
\[
    (\bar x,\bar t)\in\Omega\times(t_1,t_2]
\]
such that
\[
    M = v(\bar x, \bar t)-u(\bar x, \bar t)>0.
\]
For \(\varepsilon>0\), define
\[
    M_\varepsilon
    \coloneqq
    \sup_{\substack{x,y\in\overline\Omega\\ t,\tau\in[t_1,t_2]}}
    \left[
        v(x,t)-u(y,\tau)
        -\frac{|x-y|^2}{2\varepsilon}
        -\frac{|t-\tau|^2}{2\varepsilon}
    \right].
\]
Let $(x_\varepsilon,y_\varepsilon,t_\varepsilon,\tau_\varepsilon)$ be a point at which \(M_\varepsilon\) is attained. It then follows that $M_\varepsilon\ge M>0$. After passing to a subsequence,
\[
    x_\varepsilon,y_\varepsilon\to x^\ast,
    \qquad
    t_\varepsilon,\tau_\varepsilon\to t^\ast,
\]
and
\[
    v(x_\varepsilon,t_\varepsilon) - u(y_\varepsilon,\tau_\varepsilon) \to M.
\]
In particular, for \(\varepsilon\) small enough,
\[
    x_\varepsilon,y_\varepsilon\in\Omega,
    \qquad
    t_\varepsilon,\tau_\varepsilon>t_1.
\]
Define
\[
    \phi_\varepsilon(x,t)
    \coloneqq
    u(y_\varepsilon,\tau_\varepsilon)
    +M_\varepsilon
    +\frac{|x-y_\varepsilon|^2}{2\varepsilon}
    +\frac{|t-\tau_\varepsilon|^2}{2\varepsilon},
\]
and
\[
    \psi_\varepsilon(y,\tau)
    \coloneqq
    v(x_\varepsilon,t_\varepsilon)
    -M_\varepsilon
    -\frac{|y-x_\varepsilon|^2}{2\varepsilon}
    -\frac{|\tau-t_\varepsilon|^2}{2\varepsilon}.
\]
Then \(\phi_\varepsilon\) touches \(v\) from above at \((x_\varepsilon,t_\varepsilon)\), and \(\psi_\varepsilon\) touches \(u\) from below at \((y_\varepsilon,\tau_\varepsilon)\). Therefore, by
Lemma \ref{l:pointwise-equation}, we have
\[
    \partial_t\phi_\varepsilon(x_\varepsilon,t_\varepsilon)
    +
    (-\Delta_p)^s v(x_\varepsilon,t_\varepsilon)
    \le0,
\]
and
\[
    \partial_\tau\psi_\varepsilon(y_\varepsilon,\tau_\varepsilon)
    +
    (-\Delta_p)^s u(y_\varepsilon,\tau_\varepsilon)
    \ge0.
\]
Subtracting and using the equality of the time derivatives, we obtain
\begin{align}\label{eq:chiba}
    0
    \ge
    (-\Delta_p)^s v(x_\varepsilon,t_\varepsilon)
    -
    (-\Delta_p)^s u(y_\varepsilon,\tau_\varepsilon) = \int_{\mathbb R^d}
    \Theta_\varepsilon(z)|z|^{-d-sp}\,dz,
\end{align}
where
\[
\begin{aligned}
    \Theta_\varepsilon(z)
    &\coloneqq
    J_p\bigl(
        v(x_\varepsilon,t_\varepsilon)
        -
        v(x_\varepsilon+z,t_\varepsilon)
    \bigr)
    -
    J_p\bigl(
        u(y_\varepsilon,\tau_\varepsilon)
        -
        u(y_\varepsilon+z,\tau_\varepsilon)
    \bigr).
\end{aligned}
\]
For \(a\in\Omega\), set
\[
    E_a\coloneqq \{z\in\mathbb R^d \colon a+z\in\Omega\}.
\]
We decompose
\[
    \mathbb R^d
    =
    (E_{x_\varepsilon}\cap E_{y_\varepsilon})
    \cup
    \left(\mathbb R^d\setminus(E_{x_\varepsilon}\cap E_{y_\varepsilon})\right)
    \eqqcolon
    F_\varepsilon\cup F^\varepsilon.
\]
If \(z\in F_\varepsilon\), then both \(x_\varepsilon+z\) and \(y_\varepsilon+z\) belong to \(\Omega\). Comparing the maximum point with $(x_\varepsilon+z,y_\varepsilon+z,t_\varepsilon,\tau_\varepsilon)$,
we get $W_\varepsilon(z) \ge 0$, where
\[
\begin{aligned}
    W_\varepsilon(z)
    &\coloneqq
    v(x_\varepsilon,t_\varepsilon) - v(x_\varepsilon+z,t_\varepsilon)
    -
    u(y_\varepsilon,\tau_\varepsilon) + u(y_\varepsilon+z,\tau_\varepsilon).
\end{aligned}
\]
By Lemma \ref{l:Jp_est},
\[
    \Theta_\varepsilon(z)
    \ge
    c\left(
        |u(y_\varepsilon,\tau_\varepsilon)
        -
        u(y_\varepsilon+z,\tau_\varepsilon)|
        +
        W_\varepsilon(z)
    \right)^{p-2}
    W_\varepsilon(z) \geq 0 \quad \text{for } z\in F_\varepsilon.
\]
Hence
\[
    \int_{F_\varepsilon}
    \Theta_\varepsilon(z)|z|^{-d-sp}\,dz
    \ge0.
\]
It remains to estimate the exterior part \(F^\varepsilon\). Choose
\[
    \rho\coloneqq \frac14\operatorname{dist}(x^\ast,\partial\Omega)>0.
\]
For \(\varepsilon\) sufficiently small, $x_\varepsilon,y_\varepsilon\in B_\rho(x^\ast)$, and therefore $F^\varepsilon\subset B_{2\rho}^c$. Thus, the kernel is nonsingular on \(F^\varepsilon\). We split
\[
\begin{aligned}
    \int_{F^\varepsilon}
    \Theta_\varepsilon(z)|z|^{-d-sp}\,dz
    &=
    \int_{F^\varepsilon}
    \Theta_\varepsilon(z)
    \mathcal X_{\{W_\varepsilon<0\}}
    |z|^{-d-sp}\,dz
    \\
    &\quad+
    \int_{F^\varepsilon}
    \Theta_\varepsilon(z)
    \mathcal X_{\{W_\varepsilon\ge0\}}
    |z|^{-d-sp}\,dz.
\end{aligned}
\]
On \(\{W_\varepsilon\ge0\}\), monotonicity of \(J_p\) gives $\Theta_\varepsilon(z) \ge 0$. Therefore
\[
    \liminf_{\varepsilon\to0}
    \int_{F^\varepsilon}
    \Theta_\varepsilon(z)
    \mathcal X_{\{W_\varepsilon\ge0\}}
    |z|^{-d-sp}\,dz
    \ge0.
\]
We now prove that the negative part vanishes. Passing to a subsequence if
necessary, we may assume
\[
    \mathcal X_{F^\varepsilon}
    \to
    \mathcal X_{\mathbb R^d\setminus E_{x^\ast}}
    \qquad\text{a.e. in } \mathbb{R}^d. 
\]
The assumption that $u_-,v_+ \in C_{\rm loc}\bigl(t_1,t_2;L^{p-1}_{sp}(\mathbb R^d)\bigr)$, see \cite[Lemma 4.1]{KKL}, gives
\[
    \liminf_{\varepsilon\to0}
    u(y_\varepsilon+z,\tau_\varepsilon)
    \ge
    u(x^\ast+z,t^\ast),
\]
and
\[
    \limsup_{\varepsilon\to0}
    v(x_\varepsilon+z,t_\varepsilon)
    \le
    v(x^\ast+z,t^\ast),
\]
for a.e. \(z\in B_{2\rho}^c\). Since $v(x_\varepsilon,t_\varepsilon) - u(y_\varepsilon,\tau_\varepsilon) \to M$, we obtain
\[
    \liminf_{\varepsilon\to0} W_\varepsilon(z)
    \ge
    M+u(x^\ast+z,t^\ast)-v(x^\ast+z,t^\ast)
\]
for a.e. \(z\in B_{2\rho}^c\). On \(\mathbb R^d\setminus E_{x^\ast}\), we have
\(x^\ast+z\in\mathbb R^d\setminus\Omega\). Hence, the exterior ordering at time \(t^\ast\) gives
\[
    u(x^\ast+z,t^\ast)\ge v(x^\ast+z,t^\ast)
    \qquad\text{for a.e. }z\in\mathbb R^d\setminus E_{x^\ast}.
\]
Consequently,
\[
    \liminf_{\varepsilon\to0}
    W_\varepsilon(z)\mathcal X_{F^\varepsilon}(z)
    \ge
    M\mathcal X_{\mathbb R^d\setminus E_{x^\ast}}(z)
    \ge0
    \qquad\text{for a.e. } z \in \mathbb{R}^d.
\]
In particular, $\mathcal X_{\{W_\varepsilon<0\}}\mathcal X_{F^\varepsilon} \to 0$ a.e. in $\mathbb{R}^d$. We next dominate the negative part of \(\Theta_\varepsilon\). Since
\[
    v(x_\varepsilon,t_\varepsilon) - u(y_\varepsilon,\tau_\varepsilon) \to M,
\]
and \(v\) is upper semicontinuous, and \(u\) is lower semicontinuous near \((x^\ast,t^\ast)\), the quantities $v(x_\varepsilon,t_\varepsilon)$ and $u(y_\varepsilon,\tau_\varepsilon)$ are uniformly bounded. Elementary bounds for \(J_p\) therefore imply
\[
    \Theta_\varepsilon(z)
    \ge
    -C\left(
        1
        +
        v_+^{p-1}(x_\varepsilon+z,t_\varepsilon)
        +
        u_-^{p-1}(y_\varepsilon+z,\tau_\varepsilon)
    \right)
\]
for a.e. \(z\in F^\varepsilon\). By the assumption $u_-,v_+ \in C_{\rm loc}\bigl(t_1,t_2;L^{p-1}_{sp}(\mathbb R^d)\bigr)$ we have
\[
    v_+(x_\varepsilon+\cdot,t_\varepsilon) \to v_+(x^\ast+\cdot,t^\ast)
    \qquad 
    u_-(y_\varepsilon+\cdot,\tau_\varepsilon) \to u_-(x^\ast+\cdot,t^\ast)
\]
in $L^{p-1}_{sp}\bigl(B_{2\rho}^c\bigr)$. Since $\mathcal X_{\{W_\varepsilon<0\}}\mathcal X_{F^\varepsilon} \to 0$ a.e. in $\mathbb{R}^d$, the Dominated convergence theorem yields
\[
\begin{aligned}
    &\int_{F^\varepsilon}
    \left(
        1
        +
        v_+^{p-1}(x_\varepsilon+z,t_\varepsilon)
        +
        u_-^{p-1}(y_\varepsilon+z,\tau_\varepsilon)
    \right)
    \mathcal X_{\{W_\varepsilon<0\}}
    |z|^{-d-sp}\,dz
    \to0.
\end{aligned}
\]
Using the lower bound for \(\Theta_\varepsilon\), we conclude
\[
    \int_{F^\varepsilon}
    \Theta_\varepsilon(z)\mathcal X_{\{W_\varepsilon<0\}}
    |z|^{-d-sp}\,dz
    \to0.
\]
Thus
\[
    \liminf_{\varepsilon\to0}
    \int_{F^\varepsilon}
    \Theta_\varepsilon(z)|z|^{-d-sp}\,dz
    \ge0.
\]
Combining \eqref{eq:chiba} with the estimate on \(F_\varepsilon\), we obtain
\[
    \liminf_{\varepsilon\to0}
    \int_{F_\varepsilon}
    \Theta_\varepsilon(z)|z|^{-d-sp}\,dz
    \le0.
\]
But this can only happen if
\[
    \liminf_{\varepsilon\to0}W_\varepsilon(z)=0
    \qquad\text{for a.e. }z\in E_{x^\ast}.
\]
For such \(z\in E_{x^\ast}\), we have \(x^\ast+z\in\Omega\). By the upper semicontinuity of \(v\) and the lower semicontinuity of \(u\),
\[
    \liminf_{\varepsilon\to0}W_\varepsilon(z)
    \ge
    M-
    \bigl(
        v(x^\ast+z,t^\ast)-u(x^\ast+z,t^\ast)
    \bigr).
\]
Therefore
\[
    v(x^\ast+z,t^\ast)-u(x^\ast+z,t^\ast)
    \ge M
    \qquad\text{for a.e. }z\in E_{x^\ast}.
\]
Equivalently,
\[
    v(\xi,t^\ast)-u(\xi,t^\ast)\ge M
    \qquad\text{for a.e. }\xi\in\Omega.
\]
Since \(v-u\) is upper semicontinuous, this inequality holds for every \(\xi\in\Omega\).

Letting \(\xi\to\partial\Omega\) from inside and using the lateral boundary
ordering, we obtain
\[
    0
    \ge
    \limsup_{\Omega\ni \xi\to\xi_0}
    \bigl(v(\xi,t^\ast)-u(\xi,t^\ast)\bigr)
    \ge M>0,
\]
a contradiction. Hence \(M\le0\), and the claim follows. 
\end{proof}

\subsection{Weak implies viscosity}

We aim at proving that weak solutions to \eqref{eq:parabolic-fractional-p-laplacian} are also viscosity solutions. We will need the following result.

\begin{lemma}\label{l:classical-are-weak}
Let $w \in C^{1,1}(Q_r(x_0,t_0)) \cap L^{p-1}(t_0-r^2,t_0;L^{p-1}_{sp}(\R^d))$. If $\partial_t w + (-\Delta_p)^s w \geq 0$ in the pointwise sense in $Q_r(x_0,t_0)$, then $w$ is also a supersolution in the weak sense in $Q_r(x_0,t_0)$.
\end{lemma}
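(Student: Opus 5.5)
The plan is to fix a nonnegative test function $\varphi$ as in Definition \ref{def:weak-sol-definition}, supported in a compact $\mathcal{K}\subset B_r(x_0)$ and on a time interval $[\tau_1,\tau_2]\subset(t_0-r^2,t_0]$, multiply the pointwise inequality by $\varphi$, and integrate over $\mathcal{K}\times[\tau_1,\tau_2]$. For each fixed $t$ we will show that
\[
    \int_{\mathcal{K}} (-\Delta_p)^s w(x,t)\,\varphi(x,t)\dd x = \tfrac12\,\mathcal{E}(w(\cdot,t),\varphi(\cdot,t)),
\]
with the normalization of $\mathcal{E}$ matching the paper's convention (the factor $1/2$, or its absence, only changes constants and does not affect the sign). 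The time part is handled by integration by parts in $t$: since $w$ is Lipschitz in time on $Q_r(x_0,t_0)$ and $\varphi\in H^1(\tau_1,\tau_2;L^2(\mathcal{K}))$, we have
\[
    \int_{\tau_1}^{\tau_2}\int_{\mathcal{K}}\partial_t w\,\varphi = \int_{\mathcal{K}} w\varphi\Big|_{\tau_1}^{\tau_2} - \int_{\tau_1}^{\tau_2}\int_{\mathcal{K}} w\,\partial_t\varphi .
\]
This gives exactly the supersolution inequality. If one prefers smooth test functions first, a density argument in $L^p(W^{s,p}_0)\cap H^1(L^2)$ extends it, provided each term is continuous in $\varphi$. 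The regularity requirements on $w$ in the definition follow from its $C^{1,1}$ regularity inside and from the tail assumption outside.

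The key step is the spatial identity, obtained by symmetrization. We first fix $t$ and $\varepsilon>0$, and write the truncated operator
\[
    (-\Delta_p)^s_\varepsilon w(x,t)=\int_{|x-y|>\varepsilon}\frac{J_p(w(x,t)-w(y,t))}{|x-y|^{d+sp}}\dd y .
\]
The truncated kernel is integrable against $\varphi(x)$ because $\varphi$ is supported in $\mathcal{K}$, $w(\cdot,t)\in L^{p-1}_{sp}$, and $w$ is bounded on $\mathcal{K}$. Exchanging $x$ and $y$ and using that $J_p$ is odd, we get
\[
    \int (-\Delta_p)^s_\varepsilon w\,\varphi\dd x=\tfrac12\iint_{|x-y|>\varepsilon}\frac{J_p(w(x)-w(y))(\varphi(x)-\varphi(y))}{|x-y|^{d+sp}}\dd x\dd y .
\]
We then let $\varepsilon\to0$. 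On the right-hand side we use dominated convergence: with $\operatorname{dist}(\mathcal{K},\partial B_r)=\delta$, near the diagonal inside $B_{r-\delta/2}$ we have $|J_p(w(x)-w(y))|\le C|x-y|^{p-1}$, and H\"older's inequality together with $\varphi\in W^{s,p}$ and $w\in W^{s,p}_{\mathrm{loc}}$ controls the integrand. Away from the diagonal, the tail bound $\int |w(y)|^{p-1}(1+|y|)^{-d-sp}$ controls it. On the left-hand side, Lemma \ref{l:Korvenpaa-Kuusi-Lindgren} (applied with $D=B_{r-\delta/2}\supset\mathcal{K}$) gives $(-\Delta_p)^s_\varepsilon w\to(-\Delta_p)^s w$ uniformly in $x\in\mathcal{K}$, since the difference is the principal value over $B_\varepsilon(x)$. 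That lemma is stated for $C^2$ functions, while $w$ here is only $C^{1,1}$. Its proof works verbatim for $C^{1,1}$ when $p\ge2$: symmetrizing, $|J_p(a+b)+J_p(-a+b)|\lesssim |a|^{p-2}|b|+|b|^{p-1}$ with $a=\nabla w\cdot z$ and $b=O(|z|^2)$, which gives an integrand of order $|z|^{p-sp-d}$, integrable near the origin.

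Finally we integrate in $t$. The bounds above are uniform in $t$ on $[\tau_1,\tau_2]$, using $w\in L^{p-1}(L^{p-1}_{sp})$ in time, the $C^{1,1}$ bounds, and $\varphi\in L^p(W^{s,p}_0)$. So Fubini applies, and summing the two pieces yields the weak supersolution inequality. I expect the main obstacle to be justifying the exchange of the principal value with integration against $\varphi$ uniformly in time, i.e., the $C^{1,1}$ version of Lemma \ref{l:Korvenpaa-Kuusi-Lindgren} combined with integrability in $t$ of the tail term. The proof will write this step out carefully, and treat the time integration by parts as routine.
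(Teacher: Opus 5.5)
Your route is the one the paper intends. The paper omits the proof and refers to \cite[Lemma 3.10]{KKL}, which is exactly: test against $\varphi$, truncate at $|x-y|>\varepsilon$, symmetrize, and remove the truncation with the small-ball estimate of Lemma \ref{l:Korvenpaa-Kuusi-Lindgren}. Your additions are the natural parabolic ones and they are fine: the $C^{1,1}$ version of that estimate (the symmetrized integrand is $O(|z|^{p-d-sp})$) and the integration by parts in time.

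The step that fails as written is your claim that the factor $\tfrac12$ ``only changes constants and does not affect the sign.'' That is true in the elliptic homogeneous setting of \cite{KKL}. Here, however, the spatial term is added to the time terms, so its coefficient matters. With the paper's conventions your (correct) identity $\int_{\mathcal K}(-\Delta_p)^s w\,\varphi\dd x=\tfrac12\mathcal E(w,\varphi)$ turns the integrated pointwise inequality into
\[
\int_{\mathcal K} w\varphi\dd x\Big|_{\tau_1}^{\tau_2}-\int_{\tau_1}^{\tau_2}\int_{\mathcal K} w\,\partial_t\varphi\dd x\dd t+\frac12\int_{\tau_1}^{\tau_2}\mathcal E(w(\cdot,t),\varphi(\cdot,t))\dd t\ge 0 .
\]
Since $\mathcal E(w,\varphi)$ can be negative, this does not imply the inequality of Definition \ref{def:weak-sol-definition}, where $\mathcal E$ enters with coefficient $1$. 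Read literally, that definition is the weak form of $\partial_t u+2(-\Delta_p)^s u\ge 0$, and the lemma is then false. Take $w=ct+g(x)$ with $g$ smooth, bounded, and with a strict minimum at $x_0$, so that $(-\Delta_p)^s g(x_0)<0$. Choose $-(-\Delta_p)^s g(x_0)<c<-2(-\Delta_p)^s g(x_0)$. Then $\partial_t w+(-\Delta_p)^s w>0$ but $\partial_t w+2(-\Delta_p)^s w<0$ near $x_0$. So you must fix a consistent normalization explicitly: a factor $\tfrac12$ in $\mathcal E$, a factor $2$ in front of the principal value, or a rescaling of time by $2$. This inconsistency is in the paper's own conventions, hidden by citing an elliptic result. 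Once it is repaired, your argument proves the lemma.

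Two smaller inaccuracies. First, under $w\in L^{p-1}(L^{p-1}_{sp})$ the tail bounds are not uniform in $t$. The time-integrable majorant you need is $\bigl(1+\|w(\cdot,t)\|^{p-1}_{L^{p-1}_{sp}(\R^d)}\bigr)\|\varphi(\cdot,t)\|_{L^1(\mathcal K)}$. It is integrable because $\varphi\in H^1(\tau_1,\tau_2;L^2(\mathcal K))\subset C([\tau_1,\tau_2];L^2(\mathcal K))$, not because $\varphi\in L^p(W^{s,p}_0)$. Second, the membership $w\in L^\infty_{\mathrm{loc}}(t_1,t_2;L^{p-1}_{sp}(\R^d))$ required in Definition \ref{def:weak-sol-definition} does not follow from the $L^{p-1}$-in-time hypothesis, contrary to what you state. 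It holds in the paper's applications, but strictly it is an extra assumption.
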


\begin{proof}
The proof of this lemma follows the same lines as \cite[Lemma 3.10]{KKL} and we skip it.    
\end{proof}

The following lemma establishes the continuity of the fractional Laplacian at smooth solutions.

\begin{lemma}\label{l:cont_frala}
Let $Q_r(x_0,t_0)\subset\Omega\times(t_1,t_2]$. Suppose $\phi\in  C_{\mathrm{loc}}(t_1,t_2;L^{p-1}_{sp}(\R^d))$ and for $(x,t)\in Q_r(x_0,t_0)$,
\[
    \phi(x,t)=\phi(x_0,t_0)+b(t-t_0)+\xi \cdot(x-x_0)+\frac{1}{2}(x-x_0)\cdot M(x-x_0).
\]
Then the function
\[
    (x,t)\to(-\Delta_p)^s\phi(x,t),
\]
is continuous at $(x_0,t_0)$.
\end{lemma}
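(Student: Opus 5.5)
Fix a sequence $(x_k,t_k)\to(x_0,t_0)$ with $(x_k,t_k)\in Q_{r}(x_0,t_0)$. The plan is to show that $(-\Delta_p)^s\phi(x_k,t_k)\to(-\Delta_p)^s\phi(x_0,t_0)$ by splitting each integral into a near part and a far part. Choose $\delta\in(0,r/4)$ small and assume $|x_k-x_0|<\delta$ and $t_k$ close to $t_0$. Then
\[
    (-\Delta_p)^s\phi(x_k,t_k)=\mathrm{P.V.}\int_{B_{\delta}(x_k)}\frac{J_p(\phi(x_k,t_k)-\phi(y,t_k))}{|x_k-y|^{d+sp}}\dd y+\int_{\R^d\setminus B_{\delta}(x_k)}\frac{J_p(\phi(x_k,t_k)-\phi(y,t_k))}{|x_k-y|^{d+sp}}\dd y,
\]
and similarly at $(x_0,t_0)$.

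\textbf{Near part.} For $k$ large, $B_\delta(x_k)\subset B_r(x_0)$ and $t_k\in(t_0-r^2,t_0]$, so on this ball $\phi(\cdot,t_k)$ is the explicit quadratic polynomial. Its $C^2$ norm is bounded uniformly in $k$, since only the constant term $b(t_k-t_0)$ varies. Lemma \ref{l:Korvenpaa-Kuusi-Lindgren} then bounds the near part by $c_\delta$, uniformly in $k$, with $c_\delta\to0$. For a quadratic one can even compute directly: the difference is $-\xi\cdot z-\frac12 z\cdot Mz$, independent of $x_k$ and $t_k$. So the principal value over $B_\delta$ equals the same number for all $k$, namely $\mathrm{P.V.}\int_{B_\delta}J_p(-\xi\cdot z-\frac12 z\cdot Mz)|z|^{-d-sp}\dd z$, which is finite by Lemma \ref{l:Korvenpaa-Kuusi-Lindgren}. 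Either way the near parts cause no trouble.

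\textbf{Far part.} After the change of variables $y=x_k+z$, the far part becomes an integral over $|z|\ge\delta$ of $J_p(\phi(x_k,t_k)-\phi(x_k+z,t_k))|z|^{-d-sp}$. The values $\phi(x_k,t_k)\to\phi(x_0,t_0)$ are bounded. By the assumption $\phi\in C_{\mathrm{loc}}(t_1,t_2;L^{p-1}_{sp}(\R^d))$, we have $\phi(\cdot,t_k)\to\phi(\cdot,t_0)$ in $L^{p-1}_{sp}$. Translation by $x_k-x_0$ is continuous in the weighted space, because on $|z|\ge\delta$ the weights $|z|^{-d-sp}$ and $(1+|x_k+z|)^{-d-sp}$ are comparable uniformly in $k$. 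Hence $\phi(x_k+\cdot,t_k)\to\phi(x_0+\cdot,t_0)$ in $L^{p-1}(B_\delta^c,|z|^{-d-sp}\dd z)$.

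Two facts then give convergence of the far part:
\begin{itemize}
\item the elementary inequality $|J_p(a)-J_p(b)|\le C(|a|+|b|)^{p-2}|a-b|$;
\item H\"older's inequality with exponents $\frac{p-1}{p-2}$ and $p-1$ against the finite measure $|z|^{-d-sp}\dd z$ on $B_\delta^c$.
\end{itemize}
Alternatively, one can pass to an a.e.\ convergent subsequence and apply a generalized dominated convergence theorem, using $|J_p(a)|\le|a|^{p-1}$ and the convergence of the $L^{p-1}$ norms. Applying this to every subsequence gives convergence of the full sequence.

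\textbf{Main obstacle.} The delicate step is this far-part convergence. Two points need care. First, the translation in space and the change in time must be handled together in the weighted $L^{p-1}$ space. Second, the portion of $B_\delta^c$ that still lies inside $Q_r$, where $\phi$ is the polynomial, contributes only a bounded continuous integrand, which is harmless. Once the far part converges, combining it with the near part (which is either exactly constant in $k$ or $O(c_\delta)$) and letting $k\to\infty$ and then $\delta\to0$ proves continuity at $(x_0,t_0)$.
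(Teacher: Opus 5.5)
Your proof is correct. It is organized differently from the paper's, and one side remark in it is false.

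\textbf{Comparison with the paper.} The paper splits the claim into two parts.
\begin{itemize}
\item Continuity in $x$ is quoted from \cite[Lemma 3.8]{KKL}.
\item Continuity in $t$ is proved only at the fixed point $x_0$. For $y\in B_r(x_0)$ the increment $\phi(x_0,t)-\phi(y,t)$ does not depend on $t$, so the local contributions cancel exactly. The exterior contribution is handled by the same bound $|J_p(a)-J_p(b)|\le C(|a|+|b|)^{p-2}|a-b|$ and H\"older inequality that you use, together with $\phi(\cdot,t)\to\phi(\cdot,t_0)$ in $L^{p-1}_{sp}(\R^d)$.
\end{itemize}
You instead run a single argument along arbitrary sequences $(x_k,t_k)\to(x_0,t_0)$. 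For the near part you use a shrinking ball $B_\delta(x_k)$ and the uniform smallness from Lemma \ref{l:Korvenpaa-Kuusi-Lindgren}. The price of the moving center is that the far part needs continuity of translations in the weighted space.

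That step is slightly under-justified. Comparability of the weights only gives uniform boundedness of $f\mapsto f(x_k+\cdot)$ from $L^{p-1}_{sp}(\R^d)$ into $L^{p-1}(B_\delta^c,|z|^{-d-sp}\dd z)$. Convergence then needs density of compactly supported continuous functions, which you should state. Alternatively, keep a fixed exterior region $\R^d\setminus B_{r/2}(x_0)$ and let the kernel $|x_k-y|^{-d-sp}$ vary under dominated convergence, as in \cite{KKL}.

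What your route buys is joint continuity directly. That is what the proof of Theorem \ref{t:weak-are-viscosity} uses when it produces the cylinder $Q_{\bar r}(x_0,t_0)$. The paper's split, as written, gives continuity in each variable separately; to conclude, one has to observe that its time estimate is uniform for $x$ near $x_0$. The paper's route is shorter because it exploits the exact time invariance of the local term and outsources the spatial part.

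\textbf{False side remark.} Your claim that the principal value over $B_\delta$ is the same number for all $k$ is wrong and should be dropped. For $p>2$ it fails, since
\[
\phi(x_k,t_k)-\phi(x_k+z,t_k)=-\bigl(\xi+\tfrac12(M+M^{T})(x_k-x_0)\bigr)\cdot z-\tfrac12\, z\cdot Mz
\]
depends on $x_k$ through its linear coefficient, and $J_p$ is nonlinear. The increment is independent of $t_k$ only, which is precisely the paper's observation. Your conclusion uses only the uniform bound $c_\delta$, so the proof itself is unaffected.
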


\begin{proof}
The continuity in space is exactly \cite[Lemma 3.8]{KKL}. We only prove the continuity in time. Regarding the local term, since 
\[
    \phi(x_0, t)-\phi(y, t)=\phi(x_0, t_0)-\phi(y, t_0),
\]
we directly get
\begin{align*}
        \int_{B_r(x_0)}\frac{J_p(\phi(x_0, t)-\phi(y, t))-J_p(\phi(x_0,t_0)-\phi(y,t_0))}{|x_0-y|^{d+sp}}\dd y=0.
\end{align*}
Regarding the nonlocal term, we use the continuity of the tail. In fact,
\begin{align*}
     &  \int_{B^c_r(x_0)}\frac{|J_p(\phi(x_0,t)-\phi(y,t))-J_p(\phi(x_0,t_0)-\phi(y,t_0))|}{|x_0-y|^{d+sp}}\dd y\\
    &\leq C\int_{B^c_r(x_0)}\frac{ \left(  1+|\phi(y,t)|+|\phi(y,t_0)|  \right)^{p-2}|\phi(x_0,t_0)-\phi(x_0,t)|}{|x_0-y|^{d+sp}}\dd y\\
    &\quad + C\int_{B^c_r(x_0)}\frac{ \left(  1+|\phi(y,t)|^{p-2} \right)|\phi(y,t_0)-\phi(y,t)|}{|x_0-y|^{d+sp}}\dd y\\
    & \quad + C\int_{B^c_r(x_0)}\frac{ |\phi(y,t_0)-\phi(y,t)|^{p-1}}{|x_0-y|^{d+sp}}\dd y\\
    &\leq C_1\,\left( |\phi(x_0,t_0)-\phi(x_0,t)|+ \left(\int_{B^c_r(x_0)}\frac{ |\phi(y,t_0)-\phi(y,t)|^{p-1}}{|x_0-y|^{d+sp}}\dd y\right)^\frac{1}{p-1}\right),
\end{align*}
where $C_1$ depends further on $\sup_{t \in [t_0 - \bar r,t_0]}\|\phi(\cdot,t)\|_{L^{p-1}_{sp}(\R^d)}$. These two terms clearly go to zero as $t\to t_0$. 
\end{proof}

Now we prove that weak solutions are viscosity solutions.

\begin{theorem}\label{t:weak-are-viscosity}
Let $u$ be a weak subsolution to \eqref{eq:parabolic-fractional-p-laplacian} in $\Omega \times (t_1,t_2]$. If $u$ satisfies 
\[
    u \in C_{\mathrm{loc}}(\Omega \times (t_1,t_2]) \cap C_{\mathrm{loc}}(t_1,t_2;L^{p-1}_{sp}(\R^d)),
\]
then it is also a viscosity subsolution in $\Omega \times (t_1,t_2]$. A similar statement holds for supersolutions.    
\end{theorem}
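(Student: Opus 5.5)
We argue by contradiction, following the scheme of \cite[Theorem 3.9]{KKL} together with the comparison principle for weak solutions, Theorem \ref{t:comparison-weak}. By continuity, $u$ is upper semicontinuous, and the membership $u\in C_{\mathrm{loc}}(t_1,t_2;L^{p-1}_{sp}(\R^d))$ is assumed, so only the test-function inequality has to be checked. Suppose $\phi\in C^2(Q_r(x_0,t_0))$ touches $u$ from above at $(x_0,t_0)$ but
\[
    \partial_t\phi(x_0,t_0)+(-\Delta_p)^s\phi_r(x_0,t_0)>0 .
\]

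We first reduce to a quadratic test function. Replacing $\phi$ by its second-order Taylor polynomial in space plus a small penalty $\eta(|x-x_0|^4+(t_0-t))$, and shrinking $r$, we may assume $\phi$ touches strictly and has the form required in Lemma \ref{l:cont_frala}, with $\phi_r$ equal to $u$ outside $Q_r$. Lemma \ref{l:Korvenpaa-Kuusi-Lindgren} shows that these modifications perturb the local part of $(-\Delta_p)^s\phi_r(x_0,t_0)$ by an arbitrarily small amount, so the strict inequality is preserved. Lemma \ref{l:cont_frala} then gives continuity of $(x,t)\mapsto \partial_t\phi_r+(-\Delta_p)^s\phi_r$ at $(x_0,t_0)$, so for some small $\rho<r$,
\[
    \partial_t\phi_r+(-\Delta_p)^s\phi_r\ge c_0>0 \quad\text{in } Q_\rho(x_0,t_0).
\]

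Next we perturb the barrier downward. Since the touching is strict, $\phi-u\ge\mu>0$ on the parabolic boundary of $Q_\rho(x_0,t_0)$, namely on $\partial B_\rho\times(t_0-\rho^2,t_0]$ and on $B_\rho\times\{t_0-\rho^2\}$. Set $\psi\coloneqq\phi_r-\delta\chi$ with $\chi$ a smooth cutoff equal to $1$ on $B_{\rho}$, supported in $B_{2\rho}$, and independent of $t$; we keep $2\rho<r$ so that $\psi=u$ outside $Q_r$. For $\delta<\mu$ small, the strict inequality $\partial_t\psi+(-\Delta_p)^s\psi\ge c_0/2$ persists in $Q_\rho$. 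Indeed, the nonlocal operator depends continuously on smooth perturbations supported in $B_{2\rho}$: Lemma \ref{l:Jp_est} gives a bound $C\delta$, uniform on $Q_\rho$, because $\phi$ is $C^2$ and the tail of $u$ is bounded for $t$ near $t_0$.

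We now apply comparison. The function $\psi$ is $C^{1,1}$ in $Q_\rho(x_0,t_0)$, has finite tail, and is a pointwise supersolution there, so by Lemma \ref{l:classical-are-weak} it is a weak supersolution. Moreover $\psi\ge u$ outside $B_\rho$ for $t\in(t_0-\rho^2,t_0]$: on $B_r\setminus B_\rho$ this follows from $\phi\ge u$ together with the strict gap $\mu$ and $\delta<\mu$, and outside $B_r$ we have $\psi=u$. Also $\psi\ge u$ at the initial time $t_0-\rho^2$. Theorem \ref{t:comparison-weak} then gives $\psi\ge u$ a.e. in $Q_\rho$, and by continuity everywhere. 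But $\psi(x_0,t_0)=u(x_0,t_0)-\delta<u(x_0,t_0)$, a contradiction.

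The main obstacle is making this perturbation argument rigorous in the parabolic setting. Two points need care. First, $\psi$ must be an admissible weak supersolution, which requires finiteness of its tail in $L^{p-1}(t_0-\rho^2,t_0;L^{p-1}_{sp})$; this holds by the assumed continuity of the tail. Second, the uniform continuity of $(-\Delta_p)^s\psi$ on $Q_\rho$ rests on Lemma \ref{l:cont_frala} at every point of $Q_\rho$, not only at $(x_0,t_0)$, so we would use the time-continuity of the tail again to obtain that uniformity.
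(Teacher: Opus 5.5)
Your proof follows the paper's overall structure: contradiction, reduction to a quadratic test function, Lemma \ref{l:cont_frala} to spread the strict inequality to a small cylinder, a downward perturbation, Lemma \ref{l:classical-are-weak}, and Theorem \ref{t:comparison-weak}. The perturbation, however, is genuinely different.

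The paper subtracts $\delta$ times a spatial bump supported in $B_{\bar r/2}(x_0)$, multiplied by a time ramp vanishing at $t_0-\bar r^2$. The perturbed function then agrees with $\phi$ on the parabolic boundary, so non-strict touching suffices. The cost is that the bump has nonzero gradient inside the cylinder. The crude bound from Lemma \ref{l:Jp_est} then only gives an integrand of order $\delta|x-y|^{p-1-d-sp}$, which is not integrable when $p-1\le sp$. This is why the paper defers to the small-ball splitting of \cite[Lemma 3.9]{KKL}, built on Lemma \ref{l:Korvenpaa-Kuusi-Lindgren}, and it must also absorb an $O(\delta)$ change in $\partial_t$.

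You instead pay with strict touching and lower by a time-independent cutoff with $\chi\equiv 1$ on $B_\rho$. For $x\in B_\rho$ this gives $0\le 1-\chi(y)\le C|x-y|^2$. Lemma \ref{l:Jp_est} then bounds the integrand by $C\delta|x-y|^{p-d-sp}$ inside $B_r$ and by a tail term outside, so the uniform $C\delta$ bound follows in one line, with $\partial_t$ unchanged. You should say explicitly that this flatness of $\chi$ on $B_\rho$, not the $C^2$ regularity of $\phi$, is what makes the estimate work. Your closing worry is unnecessary: continuity at $(x_0,t_0)$ already gives the inequality on a small cylinder, and the perturbation bound above is uniform there.

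One step needs repair. The penalty $\eta|x-x_0|^4$ cannot absorb the $o(|x-x_0|^2)$ remainder left by replacing $\phi$ with its Taylor polynomial. So the new function need not stay above $u$, and it is not of the quadratic form required by Lemma \ref{l:cont_frala}. Use $\eta(|x-x_0|^2+(t_0-t))$ instead, i.e.\ enlarge the Hessian by $2\eta I$ and lower the time slope by $\eta$. In a small cylinder this gives $\phi-u\ge\frac{\eta}{2}(|x-x_0|^2+(t_0-t))$. Hence you get a gap $\mu=\eta\rho^2/2$ on all of $(B_{2\rho}(x_0)\setminus B_\rho(x_0))\times(t_0-\rho^2,t_0]$ and on $B_\rho(x_0)\times\{t_0-\rho^2\}$. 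That is where you actually use the gap, not merely on the parabolic boundary of $Q_\rho$ as you wrote.
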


\begin{proof}
We prove only the subsolution part. Let $\phi \in C^2(Q_r(x_0,t_0))$, for some cylinder $Q_r(x_0,t_0)\subset \Omega \times (t_1,t_2]$, and assume that $\phi$ touches $u$ from above at $(x_0,t_0)$ in $Q_r(x_0,t_0)$. We must show that
\[
    \partial_t \phi(x_0,t_0) + (-\Delta_p)^s \phi_r(x_0,t_0)\le 0,
\]
where
\[
    \phi_r(x,t)\coloneqq
    \begin{cases}
    \phi(x,t) & \text{if } (x,t)\in Q_r(x_0,t_0),\\[2mm]
    u(x,t) & \text{otherwise}.
\end{cases}
\]

Suppose, seeking a contradiction, that
\[
    \partial_t \phi(x_0,t_0) + (-\Delta_p)^s \phi_r(x_0,t_0)\geq \varepsilon> 0,
\]
for some $\varepsilon>0$.

Up to taking $r$ smaller, we can suppose that $\phi$ is a quadratic polynomial
\[
    \phi(x,t)=u(x_0,t_0)+b(t-t_0)+\xi\cdot(x-x_0)+\frac{1}{2}(x-x_0)\cdot M(x-x_0).
\]
Note that $\p_t\phi(x,t)-\p_t\phi(x_0,t_0) = 0$.

Using Lemma \ref{l:cont_frala}, we can find a small $0<\bar r<r$ such that 
\[
    \partial_t \phi + (-\Delta_p)^s \phi_r > \varepsilon/2 \quad \text{in} \quad Q_{\bar r}(x_0,t_0).
\] 
In this step we use assumption $u\in C_{\mathrm{loc}}(t_1,t_2;L^{p-1}_{sp}(\R^d)) $. 

We now develop the parabolic analog of \cite[Lemma 3.9]{KKL}. We can choose functions
\[
\eta \in C_0^2(B_{\bar r/2}(x_0)), \qquad 0 \le \eta \le 1, \qquad \eta(x_0)=1,
\]
and
\[
\psi \in C^1([t_0-\bar r^2,t_0]), \qquad 0 \le \psi \le 1, \qquad \psi(t_0-\bar r^2)=0, \qquad \psi(t_0)=1,
\]
such that the function \(\varphi_\delta(x,t)\), defined by
\[
    \varphi_\delta(x,t)=
    \begin{cases}
    \phi(x,t)-\delta\,\eta(x)\psi(t) & \text{if } (x,t)\in Q_r(x_0,t_0),\\[2mm]
    u(x,t) & \text{otherwise},
\end{cases}
\]
satisfies
\begin{equation}\label{small-perturbation-solution}
\sup_{Q_{\bar r}(x_0,t_0)}
\left| \bigl(\partial_t\phi + (-\Delta_p)^s\phi_r\bigr)-\bigl(\partial_t\varphi_\delta + (-\Delta_p)^s\varphi_\delta\bigr)\right|
< \frac{\varepsilon}{4},
\end{equation}
provided that \(\delta>0\) is sufficiently small. Indeed, by the regularity assumption on $\psi$, we have
\[
    \sup_{Q_{\bar r}(x_0,t_0)}|\partial_t \phi - \partial_t \varphi_\delta| \leq \delta \|\partial_t \psi\|_{L^\infty([t_0-\bar r^2,t_0])} \leq \varepsilon/8,
\]
for $\delta$ small enough. Now, given any $(x,t) \in Q_{\bar r}(x_0,t_0)$, we compute
\[
    \mathcal{I}(x,t) \coloneqq |(-\Delta_p)^s \phi_r(x,t) - (-\Delta_p)^s \varphi_\delta(x,t)|.
\]
By Lemma \ref{l:Jp_est}, we have that $\mathcal{I}(x,t)$ can be estimated by
\begin{align*}
    &\int_{B_{ r}(x_0)} \frac{|J_p(\phi(x,t) - \phi(y,t)) - J_p(\phi(x,t) - \phi(y,t) + \delta \psi(t)(\eta(y)-\eta(x)))|}{|y-x|^{d+sp}}\dd y\\
   & \quad + \int_{\R^d \backslash B_{ r}(x_0)} \frac{|J_p(\phi(x,t) - u(y,t)) - J_p(\phi(x,t) - \delta \psi(t)\eta(x) - u(y,t))|}{|y-x|^{d+sp}}\dd y\\
    \leq&\, C_p\int_{B_{ r}(x_0)} \frac{(|\phi(x,t) - \phi(y,t)| + \delta \psi(t)|\eta(x)-\eta(y)|)^{p-2}\delta \psi(t)|\eta(x)-\eta(y)|}{|y-x|^{d+sp}}\dd y\\
   & \quad + C_p \int_{\R^d \backslash B_{ r}(x_0)} \frac{(|\phi(x,t) - u(y,t)| + \delta \psi(t)\eta(x))^{p-2}\delta \psi(t)\eta(x)}{|y-x|^{d+sp}}\dd y\\
    \leq&\, C_p\int_{B_{ r}(x_0)} \frac{(|\phi(x,t) - \phi(y,t)| + \delta |\eta(x)-\eta(y)|)^{p-2}\delta |\eta(x)-\eta(y)|}{|y-x|^{d+sp}}\dd y\\
   & \quad + C_p \int_{\R^d \backslash B_{ r}(x_0)} \frac{(|\phi(x,t) - u(y,t)| + \delta \eta(x))^{p-2}\delta \eta(x)}{|y-x|^{d+sp}}\dd y,
\end{align*}
where we used that $0 \leq \psi \leq 1$ in the last inequality. 

Now we split $B_r(x_0)$ into $B_{\bar r}(x_0)$ and $B_r(x_0)\setminus B_{\bar r}(x_0)$. We can bound the integral over the annulus as follows. Taking $x\in \text{supp } \eta\subset B_{\bar r/2}(x_0)$ and using the boundedness of $\phi$, we get
\begin{align*}
    \int_{B_r(x_0)\setminus B_{\bar r}(x_0)} \frac{(|\phi(x,t) - \phi(y,t)| + \delta \eta(x))^{p-2}\delta \eta(x)}{|y-x|^{d+sp}}\dd y\leq C(\phi,\bar r)\delta,
\end{align*}
which is small for small $\delta$.

From here, following the same steps as in the proof of \cite[Lemma 3.9]{KKL}, we can choose $\delta$ small enough such that
\[
    \mathcal{I}(x,t) < \varepsilon/8, \quad \text{for all} \quad (x,t) \in Q_{\bar r}(x_0,t_0).
\]
This proves \eqref{small-perturbation-solution}, which further gives
\[
    \partial_t \varphi_\delta + (-\Delta_p)^s \varphi_\delta > \varepsilon/4 \quad \text{in} \quad Q_{\bar r}(x_0,t_0).
\]
We can then apply Lemma \ref{l:classical-are-weak} to obtain that $\varphi_\delta$ is also a weak supersolution in $Q_{\bar r}(x_0,t_0)$. We observe that since $\varphi_\delta \geq u$ outside $Q_{\bar r}(x_0,t_0)$, we can use Theorem \ref{t:comparison-weak} to obtain $\varphi_\delta \geq u$ in $Q_{\bar r}(x_0,t_0)$, but
\[
    u(x_0,t_0) - \delta = \phi(x_0,t_0) - \delta = \varphi_\delta(x_0,t_0) \geq u(x_0,t_0),
\]
which is a contradiction.
\end{proof}

\subsection{Viscosity implies weak}

We now prove that viscosity solutions are also weak solutions. 

We define the notion of inf-convolution. Let $u \in \mathrm{LSC}(\Omega \times (t_1,t_2]) \cap L^\infty(\R^d \times (t_1,t_2])$. Given a parameter $\varepsilon>0$, we define
\[
    u_\varepsilon(x,t) \coloneqq \inf_{(y,\tau) \in \R^d \times (t_1,t_2]}\left\{u(y,\tau) + \frac{|y-x|^2 + |t-\tau|}{\varepsilon} \right\}.
\]
Note that $u_\varepsilon$ is well-defined because $u \in L^\infty(\R^d \times (t_1,t_2])$, and it is $C^{1,1}$ in space from above and $C^{0,1}$ in time. A useful property of $u_\varepsilon$ is that if $u$ is a supersolution in $\Omega \times (t_1,t_2]$, then $u_\varepsilon$ is a supersolution in
\begin{equation}\label{eq:converging-domain}
    \mathcal{P}_\varepsilon \coloneqq \Omega_\varepsilon \times (t_1+2\varepsilon\|u\|_\infty,t_2], \quad \text{where} \quad \Omega_\varepsilon \coloneqq \{x \in \Omega \colon B_{\sqrt{2\varepsilon \|u\|_{L^\infty}}}(x) \subset \Omega \}.
\end{equation}

\begin{lemma}\label{l:correios-telegrafos-telefones}
Let $u$ be a viscosity supersolution to \eqref{eq:parabolic-fractional-p-laplacian} in $\Omega \times (t_1,t_2]$ such that $u \in L^\infty(\R^d \times (t_1,t_2])$. Then, $u_\varepsilon$ is a viscosity supersolution in $\mathcal{P}_\varepsilon$. In particular, $u_\varepsilon$ solves the equation almost everywhere in $\mathcal{P}_\varepsilon$.
\end{lemma}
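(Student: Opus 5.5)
The proof follows the standard inf-convolution argument: transfer test functions from $u_\varepsilon$ back to $u$ by translation, use translation invariance of the equation, and then apply Alexandrov-type a.e. differentiability.

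\emph{Localizing the infimum.} Fix $(x,t)\in\mathcal{P}_\varepsilon$. Choosing $(y,\tau)=(x,t)$ in the definition gives $u_\varepsilon(x,t)\le u(x,t)$. Hence any near-minimizer $(y,\tau)$ satisfies
\[
\frac{|y-x|^2+|t-\tau|}{\varepsilon}\le 2\|u\|_{L^\infty}.
\]
Lower semicontinuity of $u$ on $\Omega\times(t_1,t_2]$ then shows that the infimum is attained at some $(x^*,t^*)$ with $|x^*-x|\le\sqrt{2\varepsilon\|u\|_\infty}$ and $|t^*-t|\le 2\varepsilon\|u\|_\infty$. By the definition of $\mathcal{P}_\varepsilon$, this point lies in $\Omega\times(t_1,t_2]$, although $t^*$ may exceed $t$, so one should restrict $\tau\le t$ if needed. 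I expect the LSC statement to be applied on this interior region only.

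\emph{Transferring the test function.} Let $\phi\in C^2(Q_r(x,t))$ touch $u_\varepsilon$ from below at $(x,t)$. Set $h=x^*-x$, $k=t^*-t$, and define $\tilde\phi(z,s)=\phi(z-h,s-k)+c$, with $c$ chosen so that $\tilde\phi(x^*,t^*)=u(x^*,t^*)$. For every $(z,s)$,
\[
u(z,s)+\frac{|h|^2+|k|}{\varepsilon}\ge u_\varepsilon(z-h,s-k)\ge \phi(z-h,s-k).
\]
Equality holds at $(x^*,t^*)$, so $\tilde\phi$ touches $u$ from below at $(x^*,t^*)$ on $Q_r(x^*,t^*)$. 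The supersolution property then gives
\[
\partial_t\phi(x,t)+(-\Delta_p)^s\tilde\phi_r(x^*,t^*)\ge 0.
\]
Outside the cylinder, $\tilde\phi_r=u$, and there $u(\cdot+h,\cdot+k)+\mathrm{const}\ge u_\varepsilon$. Since $J_p$ is monotone and the operator is translation invariant, we get
\[
(-\Delta_p)^s\tilde\phi_r(x^*,t^*)\le(-\Delta_p)^s\phi_r(x,t),
\]
where $\phi_r$ is patched with $u_\varepsilon$ in the sense of Definition \ref{def:viscosity-solution}: the local parts coincide, and the exterior differences $\phi(x,t)-u_\varepsilon(y)\ge \tilde\phi(x^*,t^*)-u(y+h)$ compare favourably. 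Hence $u_\varepsilon$ is a viscosity supersolution.

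\emph{Membership in the admissible class.} We also need $u_\varepsilon\in\mathrm{LSC}\cap C_{\mathrm{loc}}(L^{p-1}_{sp})$. The function $u_\varepsilon$ is Lipschitz in time, being an infimum of functions that are $1/\varepsilon$-Lipschitz in $t$, and it is bounded. Continuity in $L^{p-1}_{sp}$ therefore follows from dominated convergence, since the weight is integrable against constants.

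\emph{Almost everywhere statement.} The function $u_\varepsilon-|x|^2/\varepsilon$ is concave in $x$ and $u_\varepsilon$ is Lipschitz in $t$. By Alexandrov's theorem together with Rademacher's theorem, for a.e. $(x,t)$ the function $u_\varepsilon$ admits a second-order expansion in $x$ and is differentiable in $t$. At such points, the paraboloid $P-\delta|\cdot|^2-\delta|t-\cdot|$ touches from below locally; after smoothing the time part this gives a $C^2$ test function. Lemma \ref{l:pointwise-equation} then yields
\[
\partial_t u_\varepsilon+(-\Delta_p)^s u_\varepsilon\ge -C\delta,
\]
and letting $\delta\to0$, using the continuity in Lemma \ref{l:Korvenpaa-Kuusi-Lindgren} for the local part, gives the equation a.e.

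\emph{Main obstacle.} The hardest point is that joint space-time Alexandrov differentiability is not immediate, because the concavity is only in $x$. I would first try to use the semiconcavity in $x$ together with a.e. time differentiability at points where both hold, and touch from below by $\phi(x)+\partial_t u_\varepsilon(t_0)(t-t_0)-\delta|t-t_0|$. This relies on the time Lipschitz bound being uniform in $x$.
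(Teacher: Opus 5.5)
Your proof that $u_\varepsilon$ is a viscosity supersolution is correct and is the paper's argument. You translate the test function by the displacement $(h,k)=(x^*-x,t^*-t)$ of the minimizer and apply the supersolution property of $u$ at $(x^*,t^*)$. The local parts then coincide, and you compare the exterior parts via $u(\cdot+h,t^*)+c_\varepsilon\ge u_\varepsilon(\cdot,t)$ and the monotonicity of $J_p$. No restriction $\tau\le t$ is needed, since the translation maps $Q_r(x,t)$ onto $Q_r(x^*,t^*)$ whatever the sign of $k$. Your check that $u_\varepsilon$ lies in the admissible class is a detail the paper omits. The problem is the ``in particular'' clause, which the paper does not prove but delegates to \cite{CS}. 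Your own sketch of it has a genuine gap, namely the one you flag yourself, and your proposed fix does not close it.

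To touch $u_\varepsilon$ from below at $(x_0,t_0)$ with a test function whose time derivative is close to $b=\partial_t u_\varepsilon(x_0,t_0)$, you need a joint one-sided expansion
\[
u_\varepsilon(x,t)\ge u_\varepsilon(x_0,t_0)+\xi\cdot(x-x_0)+\tfrac12 (x-x_0)\cdot X(x-x_0)+b(t-t_0)-o\bigl(|x-x_0|^2+|t-t_0|\bigr)
\]
for $t\le t_0$ near $(x_0,t_0)$. Alexandrov's theorem for $u_\varepsilon(\cdot,t_0)$ and Rademacher's theorem for $u_\varepsilon(x_0,\cdot)$ give this only on the slices $\{t=t_0\}$ and $\{x=x_0\}$. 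The uniform bound $|u_\varepsilon(x,t)-u_\varepsilon(x,t_0)|\le|t-t_0|/\varepsilon$ does not bridge them. For $\phi(x)+b(t-t_0)-\delta|t-t_0|$ to lie below $u_\varepsilon$, you need $u_\varepsilon(x,t)-u_\varepsilon(x,t_0)\ge b(t-t_0)-\delta|t-t_0|-\bigl[u_\varepsilon(x,t_0)-\phi(x)\bigr]$. Semiconcavity forces the bracket to be $O(|x-x_0|^2)$ for any $C^2$ function $\phi$ touching at $x_0$. So in the regime $|x-x_0|^2\ll t_0-t$ you would need the backward time increments of $u_\varepsilon(x,\cdot)$ at points $x\neq x_0$ to be bounded below by $b(t-t_0)-2\delta|t-t_0|$. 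That is a continuity of $\partial_t u_\varepsilon$ in $x$ which nothing in your argument provides. The kink in $-\delta|t-t_0|$, by contrast, is harmless, since on the backward cylinder it equals $\delta(t-t_0)$, so no smoothing is needed.

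The standard way out is to penalize time quadratically, with $|t-\tau|^2/\varepsilon$, shrinking $\mathcal{P}_\varepsilon$ in time to $(t_1+\sqrt{2\varepsilon\|u\|_\infty},t_2]$. Then $u_\varepsilon-(|x|^2+t^2)/\varepsilon$ is jointly concave, and Alexandrov's theorem in $\R^{d+1}$ gives a joint second-order expansion a.e. The $t^2$ and mixed terms are $o(|x-x_0|^2+|t-t_0|)$, and your translation argument goes through unchanged.
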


\begin{proof}
Let $\varphi \in C^2(Q_r(x_0,t_0))$ touch $u_\varepsilon$ from below in $Q_r(x_0,t_0)$ at $(x_0,t_0)$. We want to prove that
\[
    \psi(x,t) \coloneqq
    \begin{cases}
    \varphi(x,t) & \text{if } (x,t)\in Q_r(x_0,t_0),\\[2mm]
    u_\varepsilon(x,t) & \text{otherwise},
    \end{cases}
\]
satisfies
\begin{equation}\label{estimate-visc-u_ep}
    \partial_t \psi(x_0,t_0) + (-\Delta_p)^s \psi(x_0,t_0) \geq 0.
\end{equation}
Let $(x_\varepsilon, t_\varepsilon)$ be the point satisfying
\[
    u_\varepsilon(x_0,t_0) = u(x_\varepsilon, t_\varepsilon) + \underbrace{\frac{|x_0- x_\varepsilon|^2 + |t_0-  t_\varepsilon|}{\varepsilon}}_{\coloneqq c_\varepsilon},
\]
and define
\[
    \overline{\varphi}(x,t) \coloneqq \varphi(x+x_0 -  x_\varepsilon, t + t_0 - t_\varepsilon) - c_\varepsilon.
\]
It then follows that $\overline{\varphi}$ touches $u$ from below in $Q_r(x_\varepsilon, t_\varepsilon)$ at $( x_\varepsilon, t_\varepsilon)$. Since $u$ is a viscosity supersolution to \eqref{eq:parabolic-fractional-p-laplacian}, we obtain that
\[
    \overline \psi(x,t) \coloneqq
    \begin{cases}
    \bar \varphi(x,t) & \text{if } (x,t)\in Q_r(x_\varepsilon,t_\varepsilon),\\[2mm]
    u(x,t) & \text{otherwise},
    \end{cases}
\]
satisfies
\[
    \partial_t \overline \psi(x_\varepsilon,t_\varepsilon) + (-\Delta_p)^s \overline \psi(x_\varepsilon,t_\varepsilon) \geq 0.
\]
To obtain \eqref{estimate-visc-u_ep}, it is enough to prove $(-\Delta_p)^s \overline \psi(x_\varepsilon,t_\varepsilon) \leq (-\Delta_p)^s \psi(x_0,t_0)$, since $\partial_t \overline \psi(x_\varepsilon,t_\varepsilon) = \partial_t \psi(x_0,t_0)$.

We note that for $x \in B_r(x_\varepsilon)$, we have
\[
    \overline \psi(x_\varepsilon,t_\varepsilon) - \overline \psi(x,t_\varepsilon) = \varphi(x_0,t_0) - \varphi(x+x_0- x_\varepsilon,t_0).
\]
Moreover, by definition of $u_\varepsilon$ and $\bar \varphi$, we obtain
\[
    \bar \varphi(x_\varepsilon,t_\varepsilon) - u_\varepsilon(x,t_0) \leq u_\varepsilon(x_0,t_0) - u_\varepsilon(x + x_0 - x_\varepsilon,t_0), \quad \text{for} \quad x \in \R^d.
\]
Therefore, by monotonicity of $J_p$, we obtain after changing variables in the integral,
\[
    (-\Delta_p)^s \overline\psi(x_\varepsilon,t_\varepsilon) \leq (-\Delta_p)^s \psi(x_0,t_0).
\]
This concludes the first part of the proof. The fact that the inf-convolution solves the equation almost everywhere is a standard property of these mollifiers; see, for example, \cite[Lemmas 4.3 and 5.7]{CS}.
\end{proof}

We now have the following Caccioppoli-type estimate, which follows as a consequence of \cite[Corollary 2.1]{L}.

\begin{lemma}\label{l:sha-shi}
Let $u$ be a  weak supersolution to \eqref{eq:parabolic-fractional-p-laplacian} in $\Omega\times (t_1,t_2]$ satisfying
\[
    \|u\|_{L^\infty(\R^d\times(t_1,t_2])}\leq M,
\]
for some $M>0$. Then for all $\mathcal{K}\times(\tau_1,\tau_2]\Subset \Omega\times (t_1,t_2]$ we have
\begin{align}\label{eq:cup_fe}
    \int_{\tau_1}^{\tau_2}\iint_{\mathcal{K}\times \mathcal{K}}\frac{|u(x,t)-u(y,t)|^{p}}{|x-y|^{d+sp}}\dd x\dd y \dd t\leq C,
\end{align}
where $C>0$ depends on $\mathcal{K}\times(\tau_1,\tau_2],\Omega\times (t_1,t_2], M$ and universal constants.
\end{lemma}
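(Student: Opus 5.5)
The plan is to test the supersolution inequality of Definition \ref{def:weak-sol-definition} with $\varphi=(M-u)\,\zeta^p(x)\,\chi(t)$. Here $\zeta\in C_0^\infty(\Omega)$ is a spatial cutoff with $0\le\zeta\le 1$, $\zeta\equiv1$ on $\mathcal K$, and $|\nabla\zeta|\le C/\mathrm{dist}(\mathcal K,\partial\Omega)$. The time cutoff $\chi\in C^1$ satisfies $0\le\chi\le1$, $\chi\equiv1$ on $[\tau_1,\tau_2]$, and vanishes near $t_1$; we use it on an interval $(\tau_1-\delta,\tau_2]$ with $\tau_1-\delta>t_1$. This is the standard route behind \cite[Corollary 2.1]{L}, namely an energy estimate with level $k=M$. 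Since $\|u\|_\infty\le M$, the function $w\coloneqq M-u$ is nonnegative and bounded by $2M$ on $\R^d$, so $\varphi\ge0$ is admissible. As in Theorem \ref{t:comparison-weak}, the time regularity of $\varphi$ would be handled via Steklov averages \cite[Appendix B]{L}.

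\textbf{Energy term.} With $w=M-u$, we have $J_p(u(x)-u(y))=-J_p(w(x)-w(y))$. Hence $\mathcal E(u,\varphi)=-\iint J_p(w(x)-w(y))\bigl(w(x)\zeta^p(x)-w(y)\zeta^p(y)\bigr)\dd\mu$, and the supersolution inequality becomes an upper bound for $\int\!\mathcal E(w,w\zeta^p)\chi$ in terms of the time terms. We split $\R^d\times\R^d$ into $B\times B$, with $B\supset\operatorname{supp}\zeta$, and the two cross regions.

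On $B\times B$ we use the classical algebraic inequality
\[
J_p(a-b)(a\zeta_1^p-b\zeta_2^p)\ \ge\ c|a-b|^p\max(\zeta_1,\zeta_2)^p-C\max(a,b)^p|\zeta_1-\zeta_2|^p ,
\]
as in \cite{DCKP}. This produces $c\iint_{B\times B}|w(x)-w(y)|^p\zeta^p\dd\mu$ minus an error $CM^p\iint|\zeta(x)-\zeta(y)|^p\dd\mu\le CM^p$, which is finite because $\zeta$ is Lipschitz and $sp<p$.

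On the cross region $x\in\operatorname{supp}\zeta$, $y\notin B$, the integrand is $-J_p(w(x)-w(y))w(x)\zeta^p(x)|x-y|^{-d-sp}$. It is bounded below by $-(2M)^{p}|x-y|^{-d-sp}$, and $|x-y|$ is bounded away from zero there, so this term is at least $-CM^p$. This is exactly where the assumption $u\in L^\infty(\R^d\times(t_1,t_2])$ replaces the tail.

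\textbf{Time terms.} The parabolic part is $\int w\,\partial_t\varphi$ together with the boundary terms. Expressed in $w$, and using the identity $\partial_t w\cdot w=\tfrac12\partial_t w^2$ as in the proof of Theorem \ref{t:comparison-weak}, these reduce to $\tfrac12\int w^2\zeta^p\chi'$ plus a nonnegative term $\tfrac12\int_{\mathcal K'} w^2\zeta^p\big|_{t=\tau_2}$ that can be discarded. Both are bounded by $CM^2|\Omega|\,\|\chi'\|_\infty$. Collecting all pieces gives
\[
\int_{\tau_1}^{\tau_2}\iint_{\mathcal K\times\mathcal K}\frac{|u(x,t)-u(y,t)|^p}{|x-y|^{d+sp}}\dd x\dd y\dd t\le C\bigl(M^p+M^2\bigr),
\]
with $C$ depending on $\mathrm{dist}(\mathcal K,\partial\Omega)$, $\tau_1-t_1$, $|\Omega|$, and universal constants, which is \eqref{eq:cup_fe}.

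\textbf{Main obstacle.} The delicate step is the sign bookkeeping in the energy term: one must check that testing the \emph{super}solution inequality with $(M-u)\zeta^p\chi$ gives the correct direction. This is equivalent to $w=M-u$ being a weak subsolution, which follows from the odd symmetry of $J_p$. One must also verify the admissibility of $\varphi$ in $L^p(W^{s,p}_0)\cap H^1(L^2)$. If the direct sign check fails, the fallback is to invoke \cite[Corollary 2.1]{L} for the subsolution $w$ with level $k=0$, taking $w_+=w$, which yields the same bound.
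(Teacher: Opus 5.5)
Your proposal is correct. It differs from the paper mainly in how the energy estimate is obtained and used.

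\textbf{How the paper argues.} The paper does not test the equation itself. It quotes the Caccioppoli estimate of \cite[Corollary 2.1]{L} on cylinders $Q_{2r}(x_0,t_0)\subset\Omega\times(t_1,t_2]$. It bounds each right-hand term by $C(r,M)$; the tail contribution reduces to $M^{p-1}\iint_Q|u|$ precisely because $\|u\|_{L^\infty(\R^d\times(t_1,t_2])}\le M$. It then passes to an arbitrary $\mathcal K\times(\tau_1,\tau_2]$ by a covering argument as in \cite[Lemma 5]{KKP}, where the off-diagonal pairs $|x-y|\ge r$ are controlled trivially by $M$.

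\textbf{How you argue.} You re-derive that energy estimate directly, at level $M$ and with a cutoff $\zeta\equiv1$ on all of $\mathcal K$. This gives the full $\mathcal K\times\mathcal K$ integral at once, with no covering, and an explicit bound $C(M^p+M^2)$. It also shows exactly where the global $L^\infty$ bound replaces the tail. The price is that you must carry out the Steklov-average justification and the algebraic inequality yourself, which the citation hides.

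\textbf{The sign check works.} Because $\chi$ vanishes at the initial time, the terms carrying $M$ cancel: $M\int\varphi|_{\tau_2}-M\iint\partial_t\varphi=M\int\varphi|_{\tau_1-\delta}=0$. What remains is $\int\mathcal E(w,w\zeta^p)\chi\le\frac12\iint w^2\zeta^p\chi'-\frac12\int w^2\zeta^p|_{\tau_2}$, so the fallback you mention is not needed.

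\textbf{Two small points.}
\begin{itemize}
\item Choose $B$ with $\operatorname{dist}(\operatorname{supp}\zeta,\R^d\setminus B)>0$, so that the cross term really is bounded by $CM^p$.
\item Your constant also depends on $t_2-t_1$, through the time integration of the $CM^p$ error terms. The statement allows this.
\end{itemize}
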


\begin{proof}
By \cite[Corollary 2.1]{L}, we obtain, for any $Q \coloneqq Q_{2r}(x_0,t_0)\subset \Omega\times (t_1,t_2]$,
    \begin{align*}
& \int_{t_0 -r^2}^{t_0} \int_{B_r(x_0)} \int_{B_r(x_0)}
\frac{\lvert u(x,t) - u(y,t) \rvert^p}
{\lvert x - y \rvert^{d+sp}}
\dd x\dd y\dd t \\
&\le 
\gamma r^{-sp}
\iint_{Q} u^p(x,t)\dd x\dd t  
+ \gamma r^{-sp} M^{p-1}
\iint_{Q} |u(x,t)|\dd x\dd t
 \\
&\quad
+ \gamma r^{-2}
\iint_{Q} u^2(x,t)\dd x\dd t \\
&\le C(r,M).
\end{align*}

Now, for any arbitrary set $\mathcal{K}\times(\tau_1,\tau_2]\Subset \Omega\times (t_1,t_2]$, we can use a standard covering argument (see, for example, \cite[Lemma 5]{KKP}) to obtain \eqref{eq:cup_fe}.
\end{proof}

The next result states that if $u$ can be approximated by a sequence of weak supersolutions, then it must be itself a weak supersolution.

\begin{proposition}\label{p:brez-bunk}
Let $(u_j)_{j \in \mathbb{N}}$ be a sequence of weak supersolutions to \eqref{eq:parabolic-fractional-p-laplacian} such that
\[
    \|u_j\|_{L^\infty(\R^d \times (t_1,t_2])} \leq M,  \quad \text{for all } j \in \mathbb{N}.
\]
Suppose that $u_j$ converges to a function $u$ almost everywhere in $\R^d \times (t_1,t_2]$ as $j\to \infty$. Then, $u$ is a weak supersolution in $\Omega \times (t_1,t_2]$ as well.
\end{proposition}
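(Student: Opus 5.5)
The plan is to pass to the limit in the weak supersolution inequality of Definition \ref{def:weak-sol-definition}, using the uniform energy bound of Lemma \ref{l:sha-shi} for compactness.

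\textbf{Step 1: regularity and compactness.} Fix $\mathcal{K}\times(\tau_1,\tau_2]\Subset\Omega\times(t_1,t_2]$. Lemma \ref{l:sha-shi} bounds $\int_{\tau_1}^{\tau_2}[u_j(\cdot,t)]^p_{W^{s,p}(\mathcal{K})}\,dt$ uniformly in $j$. Combined with the uniform $L^\infty$ bound, this gives $u_j$ bounded in $L^p(\tau_1,\tau_2;W^{s,p}(\mathcal{K}))$. By Fatou's lemma and the a.e.\ convergence, $u$ inherits the same bound, and $|u|\le M$. In particular $u\in L^\infty(t_1,t_2;L^{p-1}_{sp}(\R^d))$ trivially.

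\textbf{Step 2: passing to the limit in the linear terms.} Fix a nonnegative test function $\varphi$ as in the definition. The integrals $\int_{\tau_1}^{\tau_2}\int_{\mathcal{K}}u_j\,\partial_t\varphi$ converge by dominated convergence, with dominating function $M|\partial_t\varphi|$.

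The slice terms $\int_{\mathcal{K}}u_j(\cdot,\tau_i)\varphi(\cdot,\tau_i)$ are more delicate, since a.e.\ convergence gives nothing on a fixed time slice. I would first prove the inequality for a.e.\ $\tau_1<\tau_2$; along a subsequence, $u_j(\cdot,\tau)\to u(\cdot,\tau)$ a.e.\ for a.e.\ $\tau$ by Fubini. Alternatively, I would use test functions vanishing near $\tau_1,\tau_2$, which gives the time-integrated form of the inequality.

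\textbf{Step 3: the energy term (main obstacle).} I need
\[
\int_{\tau_1}^{\tau_2}\mathcal{E}(u_j,\varphi)\,dt\to\int_{\tau_1}^{\tau_2}\mathcal{E}(u,\varphi)\,dt .
\]
I would split the double integral into $\mathcal{K}'\times\mathcal{K}'$, with $\operatorname{supp}\varphi\subset\mathcal{K}\Subset\mathcal{K}'\Subset\Omega$, and the remainder.

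On the remainder, one variable lies in $\mathcal{K}$ and the other outside $\mathcal{K}'$. There the kernel is integrable, $|J_p(u_j(x)-u_j(y))|\le (2M)^{p-1}$, and dominated convergence applies, using $|\varphi(x)|(1+|y|)^{-d-sp}$ as the dominating function.

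On $\mathcal{K}'\times\mathcal{K}'$, write the integrand as $J_p(u_j(x)-u_j(y))|x-y|^{-(d+sp)(p-1)/p}$ times $(\varphi(x)-\varphi(y))|x-y|^{-(d+sp)/p}$. The second factor lies in $L^p$. The first factor is bounded in $L^{p'}(\mathcal{K}'\times\mathcal{K}'\times(\tau_1,\tau_2))$ by Lemma \ref{l:sha-shi}, because $|J_p|^{p'}=|\cdot|^p$. It also converges a.e. Since a sequence bounded in $L^{p'}$ with $p'>1$ that converges a.e.\ converges weakly in $L^{p'}$, testing against the second factor gives convergence of this part.

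\textbf{Step 4: continuity in time.} Combining the limits yields the supersolution inequality for $u$. The remaining requirement is $u\in C_{\rm loc}(t_1,t_2;L^2_{\rm loc}(\Omega))$. I expect this to be the genuinely delicate point. My first attempt would be to deduce it from the integrated inequality, which shows that $t\mapsto\int u\varphi$ has a representative of bounded variation. If that does not suffice, I would state the limit inequality for a.e.\ times, or use the fact that in the intended application (Lemma \ref{l:correios-telegrafos-telefones}) $u_\varepsilon$ converges monotonically to a continuous viscosity solution, so that continuity can be verified separately.
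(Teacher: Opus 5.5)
Your Steps 1--3 follow the paper's route. The paper also combines the uniform energy bound of Lemma~\ref{l:sha-shi}, dominated convergence for the linear terms, and a limit argument for the energy term (which it delegates to [KKP, Theorem 10]). Your weak-$L^{p'}$ argument on $\mathcal{K}'\times\mathcal{K}'$, with $\mathcal{K}\Subset\mathcal{K}'$, is a clean self-contained version of that last step. Enlarging to $\mathcal{K}'$ keeps the off-diagonal part away from the singularity of the kernel, so dominated convergence applies there directly. You are also right that a.e. convergence in space-time gives nothing on a fixed time slice. The paper simply asserts that the whole left-hand side, slice terms included, converges by dominated convergence. Your Fubini repair is correct: the inequality for $u$ holds whenever $\tau_1<\tau_2$ both lie in a fixed full-measure set of times.

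The genuine gap is Step 4, and it cannot be closed under the stated hypotheses, because the statement is false as written. Take $t^*\in(t_1,t_2)$ and $u_j(x,t)=\min\{1,j(t-t^*)_+\}$.
\begin{itemize}
\item These functions are constant in $x$, so $\mathcal{E}(u_j,\varphi)=0$.
\item They are Lipschitz and nondecreasing in $t$, so after integrating by parts the inequality of Definition~\ref{def:weak-sol-definition} reduces to $\int_{\tau_1}^{\tau_2}\int_{\mathcal{K}}\partial_t u_j\,\varphi\ge 0$. Hence they are weak supersolutions with $|u_j|\le 1$.
\item Their a.e. limit is $\mathbf{1}_{\{t>t^*\}}$. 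It satisfies the integral inequality but has no representative in $C_{\mathrm{loc}}(t_1,t_2;L^2_{\mathrm{loc}}(\Omega))$.
\end{itemize}
This example also rules out your option (i): bounded variation of $t\mapsto\int u\varphi$ permits exactly such upward jumps. So you must either weaken the conclusion, as in your option (ii), to the inequality for a.e. $\tau_1<\tau_2$ without the continuity requirement, or add the hypothesis $u\in C_{\mathrm{loc}}(t_1,t_2;L^2_{\mathrm{loc}}(\Omega))$. Under that hypothesis the inequality extends to every $[\tau_1,\tau_2]$:
\begin{itemize}
\item pick good times $\tau_1'\downarrow\tau_1$ and $\tau_2'\uparrow\tau_2$;
\item restrict $\varphi$ to $[\tau_1',\tau_2']$, which keeps it in the admissible test class;
\item pass to the limit using continuity of $t\mapsto\int_{\mathcal{K}}u(\cdot,t)\varphi(\cdot,t)\,\dd x$.
\end{itemize}

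For the paper's actual use in Theorem~\ref{t:ind_imp_phip}, your option (iii) is the right fix, but not for the reason you give. A viscosity supersolution is only lower semicontinuous, not continuous. What saves the argument is that Definition~\ref{def:viscosity-solution} builds in $u\in C_{\mathrm{loc}}(t_1,t_2;L^{p-1}_{sp}(\R^d))$. Together with $|u|\le M$, interpolation then gives $u\in C_{\mathrm{loc}}(t_1,t_2;L^2_{\mathrm{loc}}(\Omega))$ for the limit. The paper's own proof does not address this point at all.
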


\begin{proof}
Let $\mathcal{K}\times [\tau_1, \tau_2] \Subset \Omega\times (t_1,t_2]$ and $\varphi$ be a nonnegative function such that
\[
   \varphi \in L^p(\tau_1,\tau_2;W^{s,p}_0(\mathcal{K})) \cap H^1(\tau_1,\tau_2;L^2(\mathcal{K})).
\]
By the definition of weak supersolution, we have
\begin{equation}\label{eq:liv_espae}
\begin{aligned}
    &-\int_{\mathcal{K}}u_j(x,t)\varphi(x,t)\dd x \bigg|_{\tau_1}^{\tau_2} + \int_{\tau_1}^{\tau_2}\int_{\mathcal{K}}u_j(x,t) \partial_t \varphi(x,t)\dd x\dd t \\
    \leq &\int_{\tau_1}^{\tau_2} \mathcal{E}(u_j(\cdot,t),\varphi(\cdot,t))\dd t \\
    =& \int_{\tau_1}^{\tau_2} \mathcal{E}(u(\cdot,t),\varphi(\cdot,t))\dd t + \int_{\tau_1}^{\tau_2}\Big[\mathcal{E}(u_j(\cdot,t),\varphi(\cdot,t)) -  \mathcal{E}(u(\cdot,t),\varphi(\cdot,t))\Big]\dd t,
\end{aligned}
\end{equation}
where the quantity $\mathcal{E}(v(\cdot,t),\varphi(\cdot,t))$ is defined as
\[
    \iint_{\R^d \times \R^d} \frac{J_p(v(x,t) - v(y,t))(\varphi(x,t) - \varphi(y,t))}{|x-y|^{d+sp}}\dd y\dd x. 
\]
Now, we split $\mathcal{E}(u_j(\cdot,t),\varphi(\cdot,t)) -  \mathcal{E}(u(\cdot,t),\varphi(\cdot,t))$ as follows
\begin{align*}
    & \iint_{\mathcal{K} \times \mathcal{K}} \frac{\Big[J_p(u_j(x,t) - u_j(y,t)) - J_p(u(x,t) - u(y,t))\Big](\varphi(x,t) - \varphi(y,t))}{|x-y|^{d+sp}}\dd y\dd x\\
    & + 2\int_\mathcal{K}\int_{\mathcal{K}^c } \frac{\Big[J_p(u_j(x,t) - u_j(y,t)) - J_p(u(x,t) - u(y,t))\Big]\varphi(x,t)}{|x-y|^{d+sp}}\dd y\dd x\\
    & = A_{1,j}(t) + 2A_{2,j}(t).
\end{align*}
We wish to take the limit as $j\to \infty$ in equation \eqref{eq:liv_espae}. By the dominated convergence theorem, we obtain that the LHS converges to
\[
    -\int_{\mathcal{K}}u(x,t)\varphi(x,t)\dd x \bigg|_{\tau_1}^{\tau_2} + \int_{\tau_1}^{\tau_2}\int_{\mathcal{K}}u(x,t) \partial_t \varphi(x,t)\dd x\dd t.
\]
The proof follows by showing that
\[
    \lim_{j \to \infty} \int_{\tau_1}^{\tau_2} \left(A_{1,j}(t) + 2A_{2,j}(t) \right) = 0,
\]
and this can be done by reasoning as in the proof of \cite[Theorem 10]{KKP}, using the bound from Lemma \ref{l:sha-shi}.
\end{proof}

We conclude this section with the implication that viscosity solutions are also weak solutions.

\begin{theorem}\label{t:ind_imp_phip}
Let $u$ be a viscosity supersolution to \eqref{eq:parabolic-fractional-p-laplacian} in $\Omega \times (t_1,t_2]$. If 
\[
    u  \in L^\infty(\R^d \times (t_1,t_2]) \cap \mathrm{LSC} (\R^d \times (t_1,t_2]),
\]
then $u$ is a weak supersolution as well. A similar statement holds for subsolutions.
\end{theorem}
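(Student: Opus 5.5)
We follow the standard approximation strategy via the inf-convolution, combining Lemma \ref{l:correios-telegrafos-telefones}, Lemma \ref{l:sha-shi} and Proposition \ref{p:brez-bunk}. Let $M \coloneqq \|u\|_{L^\infty(\R^d\times(t_1,t_2])}$ and consider the inf-convolutions $u_\varepsilon$. They satisfy $-M\le u_\varepsilon \le u$, so $\|u_\varepsilon\|_{L^\infty}\le M$ uniformly. Since $u$ is lower semicontinuous on $\R^d\times(t_1,t_2]$, $u_\varepsilon$ increases pointwise to $u$ as $\varepsilon\to0$. Moreover, the domains $\mathcal P_\varepsilon$ from \eqref{eq:converging-domain} exhaust $\Omega\times(t_1,t_2]$. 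Hence, by Proposition \ref{p:brez-bunk} applied on subcylinders $\Omega'\times(t_1',t_2]\Subset\Omega\times(t_1,t_2]$, it suffices to show that each $u_\varepsilon$ is a weak supersolution in $\mathcal P_\varepsilon$. Since the weak formulation is local in compact sets, a diagonal argument over an exhaustion then gives the claim on all of $\Omega\times(t_1,t_2]$.

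\textbf{Step 1: $u_\varepsilon$ is a weak supersolution in $\mathcal P_\varepsilon$.} By Lemma \ref{l:correios-telegrafos-telefones}, $u_\varepsilon$ is a viscosity supersolution in $\mathcal P_\varepsilon$ and satisfies $\partial_t u_\varepsilon + (-\Delta_p)^s u_\varepsilon \ge 0$ almost everywhere there. In addition, $u_\varepsilon$ is Lipschitz in time and semiconcave in space: $u_\varepsilon - |x|^2/\varepsilon$ is concave. We would like to invoke Lemma \ref{l:classical-are-weak}, but $u_\varepsilon$ is only $C^{1,1}$ from above, not $C^{1,1}$.

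Our first attempt is to imitate \cite[Theorem 10]{KKP} and \cite{JJ}. We mollify in space and time, $u_{\varepsilon,\delta}\coloneqq u_\varepsilon * \rho_\delta$. Semiconcavity passes to $u_{\varepsilon,\delta}$ with the same constant. It gives a one-sided bound on the second-order difference quotients, which in turn makes the principal value in $(-\Delta_p)^s u_{\varepsilon,\delta}$ uniformly controlled from one side.

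We then test the a.e. inequality for $u_\varepsilon$ directly against $\varphi\ge0$. The time term is handled by integrating by parts, using that $u_\varepsilon$ is Lipschitz in time. For the spatial term we must justify
\[
\int\!\!\int \frac{J_p(u_\varepsilon(x)-u_\varepsilon(y))(\varphi(x)-\varphi(y))}{|x-y|^{d+sp}}\,dy\,dx=\int (-\Delta_p)^s u_\varepsilon(x)\,\varphi(x)\,dx .
\]
We do this by truncating the kernel at $|x-y|>\eta$, symmetrizing, and letting $\eta\to0$. The semiconcavity gives a lower bound on the truncated operators, uniform in $\eta$, so Fatou's lemma yields the inequality in the correct direction. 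Since $u_\varepsilon$ is Lipschitz in space, it lies in $W^{s,p}_{\rm loc}$, and boundedness provides the tail integrability. This establishes that $u_\varepsilon$ is a weak supersolution.

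\textbf{Step 2: passage to the limit.} Lemma \ref{l:sha-shi} gives a uniform energy bound on each compact subcylinder, since $\|u_\varepsilon\|_\infty\le M$. Proposition \ref{p:brez-bunk} then shows that the a.e. limit $u$ is a weak supersolution. It remains to check the regularity classes in Definition \ref{def:weak-sol-definition}. $W^{s,p}_{\rm loc}$ follows from lower semicontinuity of the energy under a.e. convergence together with the bound of Lemma \ref{l:sha-shi}. The tail bound follows from boundedness. The continuity $u\in C_{\rm loc}(t_1,t_2;L^2_{\rm loc})$ is more delicate: we would obtain it from the equation itself via a standard Steklov argument, or else interpret the boundary-time terms in the a.e. sense as in \cite{KKP}. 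The subsolution case follows by applying the result to $-u$, using sup-convolutions.

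\textbf{Main obstacle.} We expect the hardest step to be Step 1, namely passing from the a.e. pointwise inequality for the merely semiconcave, time-Lipschitz $u_\varepsilon$ to the weak formulation. The key is to use one-sided second-order control to apply Fatou's lemma in the favourable direction. This is exactly the parabolic counterpart of the corresponding step in \cite{KKP}, and the time variable adds only the Lipschitz integration by parts.
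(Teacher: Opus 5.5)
Your proposal follows the paper's proof step by step: inf-convolution, Lemma \ref{l:correios-telegrafos-telefones}, the uniform energy bound of Lemma \ref{l:sha-shi}, Proposition \ref{p:brez-bunk}, and sup-convolutions for subsolutions. The one difference is that the paper simply invokes Lemma \ref{l:classical-are-weak} for $u_\varepsilon$, although $u_\varepsilon$ is neither $C^{1,1}$ nor a pointwise-everywhere supersolution, whereas you justify that step directly by kernel truncation and Fatou's lemma using the one-sided semiconcavity bound; this is correct and more careful (the mollification $u_{\varepsilon,\delta}$ plays no role and can be dropped). One minor point: $u\in C_{\rm loc}(t_1,t_2;L^2_{\rm loc}(\Omega))$ is not delicate, because it follows at once from the requirement $u\in C_{\rm loc}(t_1,t_2;L^{p-1}_{sp}(\R^d))$ in Definition \ref{def:viscosity-solution} together with $u\in L^\infty$, so no Steklov argument is needed (and the supersolution inequality alone could not give time continuity anyway, since it allows upward jumps in time).
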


\begin{proof}
To prove the supersolution property, let $Q \Subset \Omega \times (t_1,t_2]$, and $\varepsilon>0$ be small enough such that $Q \subset \mathcal{P}_\varepsilon$, where $\mathcal{P}_\varepsilon$ is given by \eqref{eq:converging-domain}. 

Let $u_\varepsilon$ be the inf-convolution of $u$, which solves the same equation almost everywhere in $Q \subset \mathcal{P}_\varepsilon$ by Lemma \ref{l:correios-telegrafos-telefones}. Now, we use Lemma \ref{l:classical-are-weak} to obtain that $u_\varepsilon$ is a weak supersolution to \eqref{eq:parabolic-fractional-p-laplacian} in $Q$ for every $\varepsilon$ small enough. By standard properties of the inf-convolution, we obtain that $u_\varepsilon \to u$ almost everywhere in $\R^d \times (t_1,t_2]$ as $\varepsilon \to 0$. We then use Proposition \ref{p:brez-bunk} to obtain that $u$ is a weak supersolution to \eqref{eq:parabolic-fractional-p-laplacian} in $Q$. The proof is done once we recall that $Q$ is arbitrary.

Regarding the subsolution property, we need to consider sup-convolutions, which are defined as $u^\varepsilon \coloneqq -(-u)_\varepsilon$. They satisfy properties similar to those of the inf-convolution and approximate $u$ from above. The very same reasoning applies to obtain that $u$ is a weak subsolution.
\end{proof}

\section{Parabolic Lipschitz regularity}\label{s:lipschitz-regularity}

In this section, we establish a Lipschitz estimate in the spatial variables for solutions of \eqref{eq:parabolic-fractional-p-laplacian}, thereby extending to the parabolic setting the corresponding results in \cite{BT} and \cite{BS}.

For a viscosity solution $u$ to \eqref{eq:parabolic-fractional-p-laplacian}, we crucially assume that 
\[
    u \in C_{\rm loc}(-1,0;L^{p-1}_{sp}(\R^d)),
\]
and work in the normalized setting
\begin{equation}\label{assumption:normalized-setting}
    \|u\|_{L^\infty(Q_1)} + \sup_{t \in (-1,0]}\|u(\cdot,t)\|_{L^{p-1}_{sp}(\R^d)} \leq 1.
\end{equation}

Moreover, we shall use the following bootstrap hypothesis. For some
\(\kappa\in(0,1)\), assume that there exist constants \(C_1(\kappa)\) and
\(C_2\) such that, for every \(x,y\in B_1\) and \(t,\tau\in(-1,0]\),
\begin{equation}\label{assumption:bat-ind}
    |u(x,t)-u(y,\tau)|
    \leq
    C_1(\kappa)|x-y|^\kappa
    +
    C_2|t-\tau|^{\kappa_0}.
\end{equation}
The initial exponent \(\kappa=\kappa_0\) is provided by Theorem
\ref{t:NL_Bra_nobo} for weak solutions, which, using the implication in Theorem \ref{t:weak-are-viscosity}, are also viscosity solutions, and therefore the viscosity argument below applies to them. The Ishii--Lions method will be used to improve the spatial exponent in \eqref{assumption:bat-ind}; iterating this improvement yields the desired
spatial Lipschitz estimate.

\subsection{The setup}

Our aim is to prove that if $u$ is a viscosity solution to \eqref{eq:parabolic-fractional-p-laplacian} in $Q_1$, then there exists a universal constant $L$ such that
\[
    \|\nabla u\|_{L^\infty(Q_{1/2})} \leq L.
\]
For that purpose, we set up a parabolic Ishii-Lions type argument. 

For constants $L, L_2, \beta^\ast$ to be fixed, and $t_0 \in (-1,0]$, define the function with doubled spatial variables
\[
    \Phi(x,y,t) \coloneqq u(x,t)-u(y,t)-L\,\omega(\abs{x-y})-L_2 \,\psi(x) - L_2(t_0-t)^{1+\beta^\ast},
\]
for $x,y \in B_1$ and $t \in (-1,t_0]$, where $\omega$ is either a H\"older or Lipschitz modulus of continuity, to be defined later, and $\psi(x)\coloneqq \psi_0(x)^m$, $m>2$, for a smooth nonnegative $\psi_0$ that vanishes on $\overline{B}_{1/2}$ and is strictly positive on $B_1\setminus \overline{B}_{1/2}$. This choice implies that there exists a constant $C>0$ such that
\begin{align}\label{eq:inequality_psi}
    |\nabla \psi(x)|\leq C\psi(x)^\frac{m-1}{m}, \quad \text{ for all } x\in B_1.
\end{align}
We will prove that, for a choice of $L_2$, $m$, and $\beta^\ast$ large and universal, there exists $L$ large enough such that the function $\Phi$ is nonpositive, which implies that $u$ has modulus of continuity $\omega$ in space at $t = t_0$. For the sake of simplicity, we assume $t_0 = 0$.

Seeking a contradiction, we assume there exists a triplet $(\bar x, \bar y, \bar t)$ such that
\begin{align*}
    \Phi(\bar x, \bar y, \bar t) = \sup_{x,y \in \overline{B_1} \text{ and } t \in [-1,0]} \Phi(x,y,t) = M_L>0.
\end{align*}
Now we double the time variable as well. Define
\[
\begin{aligned}
\Psi_K(x,y,t,\tau)\coloneqq{}&
u(x,t)-u(y,\tau)-L\,\omega(\abs{x-y})-L_2\psi(x) \\
&\qquad -L_2(t_0-t)^{1+\beta^\ast}-K(t-\tau)^2.
\end{aligned}
\]
Then the following must hold 
\begin{align}\label{eq:IL_contradiction}
    M_K \coloneqq \sup_{x,y \in \overline{B_1} \text{ and } t,\tau \in [-1,0]} \Psi_K(x,y,t,\tau) \geq M_L > 0.
\end{align}
We moreover define
\[
    a_K=x_K-y_K,\quad b_K \coloneqq t_K-\tau_K.
\]
From the above considerations, we immediately get
\[
    L\omega(|a_K|) + L_2 \psi(x_K) + L_2(-t_K)^{1+\beta^\ast} + K(t_K-\tau_K)^2 < 2.
\]
By making $L_2$ large, we force $x_K\in B_{5/8}$ and $|t_K|<1/4$. From $\omega(|a_K|)<2/L$, we see that $|a_K|\to 0$ as $L\to\infty$ and therefore $y_K\in B_{6/8}$ by taking $L$ large. Finally, from $K(t_K-\tau_K)^2 < 2$, we get $|b_K|\to 0$ as $K\to \infty$ and so we can take $K$ large so that $|\tau_K|<1/2$.

Since $M_K\geq M_L>0$ we also know that, up to subsequence $a_K\to \tilde a$ as $K\to \infty$ for some $\tilde a$ satisfying $|\tilde a|>0$. 

In the argument of this section, we will consider $L$ to be a constant fixed large enough to satisfy some conditions which will appear throughout the section, but which is independent of $K$. Then, we take $K$ large enough so that $|a_K|>|\tilde a|/2$. Note that $\tilde a$ depends implicitly on $L$, which is fixed. We can now take $K$ possibly even larger to ensure the following
\begin{align}\label{eq:InshSil}
    |b_K|^{\kappa_0} + &\left(\displaystyle \int_{B_{\frac{1}{16}}^c} |u(y_K+z, t_K)-u(y_K+z, \tau_K)|^{p-1}\frac{\dd z}{|z|^{d+sp}}\right)^\frac{1}{p-1}\\
    &\leq \frac{|\tilde a|}{2}\leq |a_K|.
\end{align}
Therefore, $K$ depends on $L$, but this dependence is harmless. Moreover, $K$ depends on the modulus of continuity of the $\Tail$ of $u$. But since $L$ does not depend on $K$, this dependence is harmless as well.

Define the auxiliary function 
\begin{equation}\label{eq:phi-def}
\phi(x,y) \coloneqq  L\omega(|x-y|) + L_2 \psi(x).
\end{equation}

In order to obtain viscosity inequalities, we define the following test functions 
\begin{align*}
w_1(x,t)
&=
\begin{cases}
\Lambda_1 + \phi(x,y_K) + L_2|t|^{1+\beta^\ast} + K(t-\tau_K)^2
& \text{for } x\in B_{\delta_1\,|a_K|}(x_K),\\
u(x,t)
& \text{otherwise},
\end{cases}
\\[1em]
w_2(y,\tau)
&=
\begin{cases}
\Lambda_{2} - \phi(x_K,y) - K(t_K-\tau)^2
& \text{for } y\in B_{\delta_1\,|a_K|}(y_K),\\
u(y,\tau)
& \text{otherwise},
\end{cases}
\end{align*}
where $\Lambda_{1} = u(y_K,\tau_K) + M_K$ and $\Lambda_{2} = u(x_K,t_K) - M_K -L_2|t_K|^{1+\beta^\ast}$. The constant $\delta_1$ will be chosen to be small, possibly depending on $|a_K|$. The time variables $t$ and $\tau$ are defined in a neighborhood of $t_K$ and $\tau_K$, respectively.

Note that $w_1$ touches $u$ from above at $(x_K,t_K)$ and $w_2$ touches $u$ from below at $(y_K,\tau_K)$. Therefore, by the viscosity condition, we have the following inequalities
\begin{align*}
     &\p_t w_1(x_K,t_K) + (-\Delta_p)^s w_1(x_K,t_K) \leq \, 0,\\
     &\p_t w_2(y_K,\tau_K) + (-\Delta_p)^s w_2(y_K, \tau_K) \geq \, 0.
\end{align*}
Subtracting the inequalities, we obtain 
\begin{equation}\label{eq:evaluate-at-max}
   \begin{aligned}
       \II\,&\coloneqq(-\Delta_p)^s w_1(x_K,t_K) - (-\Delta_p)^s w_2(y_K, \tau_K) \\
       &\leq \p_tw_2(y_K,\tau_K)-\p_tw_1(x_K,t_K)= L_2(1+\beta^\ast)(-t_K)^{\beta^\ast}.
   \end{aligned}
\end{equation}

For the sake of organization, we will split the analysis of the term $\II$ and the time derivative part into several lemmas, and put all of them together in Section \ref{ss:ael_tomuma_noiist}  to get a contradiction in the different scenarios, with the fact that we are assuming \eqref{eq:IL_contradiction}.

The next lemma gives a bound for  $\p_tw_2(y_K,\tau_K)-\p_tw_1(x_K,t_K)$, using the already known regularity. 

\begin{lemma}\label{l:estimate_time}
Let $u$ be a viscosity solution to \eqref{eq:parabolic-fractional-p-laplacian}, and assume \eqref{assumption:normalized-setting} and \eqref{assumption:bat-ind}. Let $K$ be large enough so that \eqref{eq:InshSil} holds.  Then, the quantity $\II$ defined in \eqref{eq:evaluate-at-max} satisfies
\begin{equation}\label{eq:decay-I-term}
        \II \leq C(\kappa)|a_K|^\frac{\kappa \beta^\ast}{1+\beta^\ast},
\end{equation}    
where $C>0$ depends on $L_2$, $\beta^\ast$, $\kappa$, and universal constants.
\end{lemma}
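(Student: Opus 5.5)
The bound follows by combining \eqref{eq:evaluate-at-max}, which reduces the claim to a bound on $(-t_K)^{\beta^\ast}$, with the positivity of the maximum $M_K$ and the bootstrap hypothesis \eqref{assumption:bat-ind}. No analysis of the nonlocal terms is needed here.

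By \eqref{eq:evaluate-at-max}, we already know $\II \le L_2(1+\beta^\ast)(-t_K)^{\beta^\ast}$. It therefore suffices to show
\[
(-t_K)^{1+\beta^\ast}\le C(\kappa)\,|a_K|^{\kappa},
\]
after which raising to the power $\beta^\ast/(1+\beta^\ast)$ gives the claim.

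To get this, we use that the maximum value is positive. From \eqref{eq:IL_contradiction}, $\Psi_K(x_K,y_K,t_K,\tau_K)=M_K>0$. Since $\omega$, $\psi$ and $K(t_K-\tau_K)^2$ are all nonnegative, dropping these terms yields
\[
L_2(-t_K)^{1+\beta^\ast} < u(x_K,t_K)-u(y_K,\tau_K).
\]
We bound the right-hand side using \eqref{assumption:bat-ind}, which applies because $x_K,y_K\in B_{6/8}$ and $t_K,\tau_K\in(-1,0]$. This gives
\[
u(x_K,t_K)-u(y_K,\tau_K)\le C_1(\kappa)|a_K|^{\kappa}+C_2|b_K|^{\kappa_0}.
\]

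The only point needing care is the time term $|b_K|^{\kappa_0}$. A priori it is not controlled by $|a_K|$, and this is exactly where the choice of $K$ in \eqref{eq:InshSil} enters: $K$ was taken large enough that $|b_K|^{\kappa_0}\le |a_K|$. Since $|a_K|\le 2$ and $\kappa<1$, we also have $|a_K|\le 2^{1-\kappa}|a_K|^\kappa$. Hence
\[
L_2(-t_K)^{1+\beta^\ast}\le \bigl(C_1(\kappa)+2C_2\bigr)|a_K|^\kappa .
\]

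Consequently,
\[
(-t_K)^{\beta^\ast}\le \Bigl(\tfrac{C_1(\kappa)+2C_2}{L_2}\Bigr)^{\frac{\beta^\ast}{1+\beta^\ast}}|a_K|^{\frac{\kappa\beta^\ast}{1+\beta^\ast}}.
\]
Substituting into \eqref{eq:evaluate-at-max} gives \eqref{eq:decay-I-term}, with a constant depending only on $L_2$, $\beta^\ast$, $C_1(\kappa)$ and universal quantities. In particular, it does not depend on $K$ or on the modulus of continuity of the tail, since \eqref{eq:InshSil} was used only qualitatively.
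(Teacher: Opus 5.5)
Your proposal is correct and matches the paper's proof: positivity of $M_K$ gives $L_2(-t_K)^{1+\beta^\ast}<u(x_K,t_K)-u(y_K,\tau_K)$. The bootstrap hypothesis \eqref{assumption:bat-ind} bounds this by $C(\kappa)|a_K|^\kappa+C_2|b_K|^{\kappa_0}$, and \eqref{eq:InshSil} absorbs the time term before you insert the result into \eqref{eq:evaluate-at-max}; the only cosmetic difference is that you use $|a_K|\le 2^{1-\kappa}|a_K|^\kappa$ where the paper uses $|a_K|\le|a_K|^\kappa$, which is valid since $|a_K|$ is small.
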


\begin{proof} 
From the previous discussion, we know that the point where the supremum \eqref{eq:IL_contradiction} is attained is interior. Since the supremum in \eqref{eq:IL_contradiction} is positive, we have
\[
    L_2(-t_K)^{1+\beta^\ast} < u(x_K,t_K) - u(y_K,\tau_K).
\]
Combining it with assumption \eqref{assumption:bat-ind}, we obtain
\[
    L_2(-t_K)^{1+\beta^\ast} < C(\kappa)|a_K|^\kappa +C_1|b_K|^{\kappa_0}.
\]
In view of \eqref{eq:evaluate-at-max}, we obtain
\begin{align*}
   \II & \leq L_2(1+\beta^\ast)|t_K|^{\beta^\ast}\\
   & = (1+\beta^\ast)L_2^{\frac{1}{1+\beta^\ast}} C^{\frac{\beta^\ast}{1+\beta^\ast}} \left(C(\kappa)|a_K|^\kappa +C_1|b_K|^{\kappa_0}\right)^\frac{\beta^\ast}{1+\beta^\ast}.
\end{align*}
Using that $|b_K|^{\kappa_0}\leq |\tilde a|/2\leq |a_K|\leq |a_K|^\kappa$, we conclude.
\end{proof}

\subsection{The cone of concavity}Take $a \in B_{1/2}$, and let $\delta_0$ be a very small number, to be fixed later, possibly depending on $a$. We define the cone $\C(a)$ as
\begin{equation} \label{def:cone}
    \C(a) \coloneqq \left\{ z \in B_{|a|/2} \colon |\langle a, z \rangle| \geq \sqrt{1 - \delta_0^2} \, |a|\,|z| \right\}.
\end{equation}

We will consider two different moduli of continuity, the first one of H\"older type and then one of Lipschitz type, modified by a lower order term to make it strictly concave. We denote them respectively by
\begin{align}\label{eq:Holder_omega}
    \omega_\gamma(r) \coloneqq r^\gamma,
\end{align}
and
\begin{align}\label{eq:Lipschitz_omega}
    \widetilde\omega(r) \coloneqq r + \frac{r}{20 \log (r/4)}.
\end{align}

We use the notation $\delta^2 f$ to denote the second increment of a generic function $f$ as
\[
    \delta^2 f(x,z) \coloneqq f(x) + \nabla f(x) \cdot z - f(x + z).
\]
In particular, when we write $\delta^2 \omega(|\cdot|)(a,z)$, we mean
\[ 
    \delta^2 \omega(|\cdot|)(a,z) = \omega(|a|) + \omega'(|a|) \, \frac {a}{|a|} \cdot z - \omega(|a+z|). 
\]
We start by bounding the second incremental quotient of the moduli of continuity.

\begin{lemma} \label{l:2nd-increment-omega}
Let $a \in B_{1/2}$ and $z \in \C(a)$. Assume that $\delta_0\leq \bar c_0$ for some small universal constant $\bar c_0$. Then the following estimates hold
\begin{equation} \label{eq:2nd-increment-Holder}        c|a|^{\gamma-2}|z|^2\leq \delta^2 \omega_\gamma(|\cdot|)(a,z) \leq C|a|^{\gamma-2}|z|^2
\end{equation}
and
\begin{equation} \label{eq:2nd-increment-Lipschitz} \frac{c}{|a| \log^2 |a|} |z|^2 \leq \delta^2 \widetilde\omega(|\cdot|)(a,z) \leq \frac{C}{|a| \log^2 |a|} |z|^2,
\end{equation}
where $c$ and $C$ are universal constants.
\end{lemma}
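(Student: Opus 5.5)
The plan is to reduce everything to a one-dimensional Taylor expansion of the radial profile $f(r)=\omega(r)$ along the direction of $a$, and then use the cone condition to control the transversal contribution. Write $z = z_\parallel + z_\perp$ with $z_\parallel = \frac{\langle a,z\rangle}{|a|^2}a$. For $z\in\C(a)$ we have $|z_\perp|\le \delta_0|z|$ and $|z_\parallel|\ge \sqrt{1-\delta_0^2}\,|z|$, and also $|z|\le|a|/2$, so that $|a+z|\in[|a|/2,3|a|/2]$.

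Set $\rho\coloneqq |a+z|$ and $h\coloneqq \frac{a}{|a|}\cdot z$. Then
\[
\rho^2 = (|a|+h)^2 + |z_\perp|^2 ,
\]
so $\rho = |a|+h+e$ with $0\le e\le |z_\perp|^2/(|a|+h) \le C\delta_0^2|z|^2/|a|$. This allows us to decompose
\[
\delta^2\omega(|\cdot|)(a,z) = \bigl[\omega(|a|)+\omega'(|a|)h-\omega(|a|+h)\bigr] + \bigl[\omega(|a|+h)-\omega(\rho)\bigr].
\]
We treat the two brackets separately.

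For the first bracket, Taylor's theorem with integral remainder gives $-\tfrac12\omega''(\xi)h^2$ for some $\xi$ between $|a|$ and $|a|+h$. Since $\xi\in[|a|/2,3|a|/2]$, it suffices to check that $-\omega''(r)$ is comparable to its value at $|a|$ on this range.

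\emph{H\"older case.} Here $-\omega_\gamma''(r)=\gamma(1-\gamma)r^{\gamma-2}\simeq |a|^{\gamma-2}$.

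\emph{Lipschitz case.} Differentiating $\widetilde\omega(r) = r + \frac{r}{20\log(r/4)}$ gives
\[
\widetilde\omega'(r) = 1 + \frac{1}{20\log(r/4)} - \frac{1}{20\log^2(r/4)}.
\]
Differentiating once more,
\[
\widetilde\omega''(r) = -\frac{1}{20\,r\log^2(r/4)} + \frac{2}{20\,r\log^3(r/4)}.
\]
Since $r<1$, we have $\log(r/4)<-\log 4$, so the second term has the same (negative) sign as the first. Hence $-\widetilde\omega''(r)\simeq \frac{1}{r\log^2(r/4)}\simeq \frac{1}{|a|\log^2|a|}$ for $r\in[|a|/2,3|a|/2]$, using $\log(r/4)\simeq \log|a|$ when $|a|<1/2$.

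Combining with $h^2\simeq |z|^2$, which holds by the cone condition, the first bracket lies between $c\,\theta(|a|)|z|^2$ and $C\,\theta(|a|)|z|^2$, where $\theta(r)=r^{\gamma-2}$ or $\theta(r)=\frac{1}{r\log^2 r}$ respectively.

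The second bracket is the main point where $\delta_0$ enters, and it is the main obstacle. By the mean value theorem and $0\le e\le C\delta_0^2|z|^2/|a|$,
\[
\bigl|\omega(|a|+h)-\omega(\rho)\bigr|\le \sup_{[|a|/2,2|a|]}\omega'\cdot e \le C\delta_0^2\,\frac{\omega'(|a|)}{|a|}\,|z|^2 .
\]
The upper bound is then immediate. For the lower bound we need this error to be absorbed by the main term, which requires $\omega'(|a|)/|a| \lesssim \theta(|a|)$ after multiplication by $\delta_0^2$.

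\emph{H\"older case.} We have $\omega'(|a|)/|a| \simeq |a|^{\gamma-2}=\theta(|a|)$, so choosing $\delta_0\le\bar c_0$ small universal works.

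\emph{Lipschitz case.} Here $\omega'(|a|)/|a|\simeq 1/|a|$, which is larger than $\theta(|a|)$ by a factor $\log^2|a|$. So we either need $\delta_0\lesssim 1/|\log|a||$, which the statement allows since $\delta_0$ may depend on $a$, or a sharper observation. Note that $e\ge0$ and $\omega$ is increasing, so the second bracket is nonnegative, because $\rho\ge |a|+h$. Hence it only helps the lower bound, and for the upper bound we can use the dependence of $\delta_0$ on $a$, taking $\delta_0\le \bar c_0/|\log|a||$. I would first try to exploit this sign observation, which makes the lower bound hold for any small universal $\delta_0$, and handle the upper bound through the $a$-dependent choice of $\delta_0$.
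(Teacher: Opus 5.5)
Your decomposition (a one-dimensional Taylor expansion along $a$ plus a transversal correction) is the same as the paper's, and your H\"older case is correct. The error is the sign of the second bracket in your last paragraph. Since $\rho=|a+z|\ge |a|+h$ and $\omega$ is increasing, $\omega(|a|+h)-\omega(\rho)\le 0$, not $\ge 0$. The paper records the same fact as $0\le \omega_\gamma(|a+z_1+z_2|)-\omega_\gamma(|a+z_1|)$, a quantity that is \emph{subtracted}. So the transversal term gives the upper bound for free, and it is exactly what works against the lower bound.

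Your plan of using this sign to get the Lipschitz lower bound for an arbitrary small universal $\delta_0$ therefore fails, and that claim is in fact false. Take $z\in\C(a)$ with $|z_\perp|=\delta_0|z|$. Then $\rho-(|a|+h)=|z_\perp|^2/(\rho+|a|+h)\ge \delta_0^2|z|^2/(3|a|)$, and $\widetilde\omega'\ge 1/2$ on $(0,1)$. Hence the second bracket is at most $-\delta_0^2|z|^2/(6|a|)$, while the first is at most $C|z|^2/(|a|\log^2|a|)$. So $\delta^2\widetilde\omega(|\cdot|)(a,z)<0$ once $|a|$ is small, for any fixed $\delta_0>0$.

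The correct route is the first alternative you mention. In the Lipschitz case take $\delta_0\le \bar c_0|\log|a||^{-1}$. Your mean-value bound then gives $|\omega(|a|+h)-\omega(\rho)|\le C\delta_0^2|z|^2/|a|\le C\bar c_0^2|z|^2/(|a|\log^2|a|)$. For $\bar c_0$ small this is absorbed by the first bracket, which gives the lower bound. The upper bound needs no restriction on $\delta_0$, because the bracket is nonpositive.

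This is precisely the regime in which the paper applies the Lipschitz estimate: Steps 3 and 5 in the proof of Theorem~\ref{t:lip-in-space_Chibrata} take $\delta_0=\bar c_0|\log|a_K||^{-1}$. The hypothesis ``$\delta_0\le\bar c_0$ universal'' in the statement is adequate only for the H\"older part. For the Lipschitz part, the paper does not give a proof and instead cites Lemma 6.4 of GJS.
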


\begin{proof}
Since \eqref{eq:2nd-increment-Lipschitz} is exactly \cite[Lemma 6.4]{GJS}, we only prove \eqref{eq:2nd-increment-Holder}. When $a$ and $z$ are colinear, we can use a Taylor expansion for $\omega_\gamma$ to write
\[
    \delta^2 \omega_\gamma(|\cdot|)(a,z)= -\frac{1}{2}\omega_\gamma''(\xi)\,|z|^2,
\]
for some $\xi$ in the line segment $[a,a+z]\subset[a/2, 3a/2]$, where we have the estimate $\omega_\gamma''(\xi)\approx |a|^{\gamma-2}$, by which we mean that there exist two constants $0<c\leq C$ such that $-C|a|^{\gamma-2}\leq \omega_\gamma''(\xi)\leq -c |a|^{\gamma-2}$. This concludes the first case. 

When $a$ and $z$ are not colinear, we write $z=z_1+z_2$, where $z_1$ points in the direction of $a$ and $z_2$ is perpendicular to $ a$. By Taylor expansion in $z_2$,
\begin{align*}
    0\leq &\, \omega_\gamma(|a+z_1+z_2|)-\omega_\gamma(|a+z_1|)\\
    =&\,\nabla_z\omega_\gamma(|a+z_1|)\cdot z_2+\frac{1}{2}z_2\cdot D^2_z\omega_\gamma(|\xi|)z_2,
\end{align*}
where now $\xi$ is some point belonging to the line segment $[a+z_1,a+z_1+z_2]$. We can compute the Hessian to get
\[
    D^2_z\omega_\gamma(|\xi|)=\omega_\gamma''(|\xi|)\frac{\xi\otimes \xi}{|\xi|^2}+\frac{\omega_\gamma'(|\xi|)}{|\xi|}\left(I-\frac{\xi\otimes\xi}{|\xi|^2}\right),
\]
and hence
\[
    |D^2_z\omega_\gamma(|\xi|)|\leq C|\xi|^{\gamma-2}\leq C|a|^{\gamma-2}.
\]
Moreover, by radial symmetry, $\nabla_z \omega_\gamma(|a+z|)$ is perpendicular to $z_2$ at $z=z_1$. Thus, we proved
\[
    0\leq  \omega_\gamma(|a+z_1+z_2|)-\omega_\gamma(|a+z_1|)\leq C|a|^{\gamma-2}|z_2|^2.
\]
By the first part of this proof,
\[
    \omega_\gamma(|a|)+\omega_\gamma'(|a|)\frac{a}{|a|}\cdot z_1-\omega(|a+z_1|)\approx |a|^{\gamma-2}|z_1|^2.
\]
Therefore,
\begin{align*}
    \delta^2\omega_\gamma(|\cdot|)(a,z)=& \omega_\gamma(|a|)+\omega'_\gamma(|a|)\frac{a}{|a|}\cdot(z_1+z_2)-\omega_\gamma(|a+z_1+z_2|)\\
    =&\, \left[ \omega_\gamma(|a|)+\omega_\gamma'(|a|)\frac{a}{|a|}\cdot z_1-\omega_\gamma(|a+z_1|) \right]\\
    & -\big[ \omega_\gamma(|a+z_1+z_2|)-\omega_\gamma(|a+z_1|) \big]\\
    \leq&\, C|a|^{\gamma-2}|z_1|^2.
\end{align*}
We can also bound it from below by
\begin{align*}
    \delta^2\omega_\gamma(|\cdot|)(a,z)\geq &\, c_1|a|^{\gamma-2}|z_1|^2-c_2
|a|^{\gamma-2}|z_2|^2\\
\geq &\, c_1|a|^{\gamma-2}|z|^2,
\end{align*}
where we used the condition $z\in \C(a)$ in the last estimate. This concludes the proof.
\end{proof}

Once we have estimated the second increment of the modulus of continuity $\omega$, the next result follows easily.

\begin{lemma}\label{l:estimate-2o-incremental}
Let $\phi$ be as in \eqref{eq:phi-def}. Let $x,y \in B_{3/4}$ and assume that $|a| \coloneqq |x-y|$ is sufficiently small. Then, the following estimates hold for $z \in \C(a)$. 
\begin{itemize}
\item[(i)] For \( \omega(r) = \omega_\gamma(r)\), we have
\[
    c L |a|^{\gamma - 2} |z|^2 
    \;\le\;
    \delta^2 \phi(\cdot, y)( x,z)
    \;\le\;
    C L | a|^{\gamma - 2} |z|^2.
\]

\item[(ii)] For $\omega(r)=\widetilde \omega(r)$, we have
\[
    c L \frac{|z|^2}{|a| \log^2(|a|)}
    \;\le\;
    \delta^2 \phi(\cdot,y)( x,z)
    \;\le\;
    C L \frac{|z|^2}{|a| \log^2(| a|)}.
\]

\item[(iii)] The same estimates hold for \(\delta^2 \phi( x, \cdot)( y,z)\) in both cases above.

\end{itemize}
The constants $c$ and $C$ above are positive and universal.
\end{lemma}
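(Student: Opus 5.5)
The plan is to split $\phi$ into its two summands. Since second increments are linear in the function,
\[
    \delta^2 \phi(\cdot,y)(x,z) = L\,\delta^2\omega(|\cdot - y|)(x,z) + L_2\,\delta^2\psi(x,z).
\]
For the first summand, translation invariance gives $\delta^2\omega(|\cdot-y|)(x,z) = \delta^2\omega(|\cdot|)(a,z)$ with $a = x-y$. Indeed, $\nabla_x \omega(|x-y|) = \omega'(|a|)\,a/|a|$, and $\omega(|x+z-y|) = \omega(|a+z|)$. The hypotheses give $x,y\in B_{3/4}$ and $|a|$ small, so $a\in B_{1/2}$. Since $z\in\C(a)$, Lemma \ref{l:2nd-increment-omega} applies. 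It yields the two-sided bound $L|a|^{\gamma-2}|z|^2$ in case (i) and $L|z|^2/(|a|\log^2|a|)$ in case (ii), up to universal constants.

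For the second summand, $\psi$ is smooth on $\overline{B_1}$ (it is $\psi_0^m$ with $m>2$). Taylor's theorem then gives $|\delta^2\psi(x,z)| \le \tfrac12\|D^2\psi\|_{L^\infty(B_1)}|z|^2 \le C|z|^2$. Here $x+z\in B_1$ because $|z|<|a|/2$ is small. This is an error term whose size $CL_2|z|^2$ does not depend on $L$ or $a$.

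The remaining step, and the only real point of the argument, is absorbing this error into the main term. In case (i), $\gamma<1$ gives $|a|^{\gamma-2}\ge |a|^{-1}$. In case (ii), $1/(|a|\log^2|a|)\to\infty$ as $|a|\to0$. In both cases the coefficient of the main term therefore blows up as $|a|\to 0$. Taking $|a|$ sufficiently small, depending only on the universal $L_2$ and on $\gamma$, we get $CL_2|z|^2 \le \tfrac12 c L\,|a|^{\gamma-2}|z|^2$. The analogous inequality holds with $1/(|a|\log^2|a|)$ in case (ii). This holds for $L\ge1$; alternatively one can use that $L$ is taken large relative to $L_2$. We can then halve the lower constant and double the upper one. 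This is exactly where the hypothesis that $|a|$ is sufficiently small enters.

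For (iii), $\phi(x,\cdot)$ is just $L\omega(|x-\cdot|)$, plus the constant $L_2\psi(x)$, which does not affect second increments. Setting $a' = y-x = -a$, we again get $\delta^2\phi(x,\cdot)(y,z) = L\,\delta^2\omega(|\cdot|)(a',z)$ with no error term at all. The cone is symmetric: $\C(-a) = \C(a)$, since its defining condition involves only $|\langle a,z\rangle|$ and $|a|$. Hence Lemma \ref{l:2nd-increment-omega} gives the claimed bounds directly.
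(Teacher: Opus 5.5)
Your proposal is correct and follows the paper's argument. You reduce the $\omega$-part to $L\,\delta^2\omega(|\cdot|)(a,z)$ and apply Lemma~\ref{l:2nd-increment-omega}, bound $|\delta^2\psi(x,z)|\le C|z|^2$ by the Hessian of $\psi$, and absorb this error for $|a|$ small because the main coefficient blows up. You also carry out case (ii) with the same absorption, where the paper just quotes it from the literature, and you spell out (iii): there the $\psi$ term drops out entirely and $\C(-a)=\C(a)$.
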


\begin{proof}
The estimate for the case $\omega=\widetilde\omega$ is exactly the same as  \cite[Corollary 6.5]{GJS}. We only prove the estimate for $\omega=\omega_\gamma$. 

The second increment of $\omega_\gamma$ is given by Lemma \ref{l:2nd-increment-omega}. The function $\psi$ is smooth, hence its second increment is bounded by the maximum of its Hessian. So we get
\[
    |\delta^2 \psi(x,z)|\leq C|z|^2\leq \bar c|a|^{\gamma-2}|z|^2,
\]
where we can make $\bar c$ small by taking $|a|$ smaller. Therefore, we make it so small that we can absorb this term into the first one. This concludes the proof.
\end{proof}

In the above lemma, we obtain a positive growth for $\delta^2\phi$ for directions in $\C(a)$. For the remaining directions $z\in B_{|a|/2}$, the best estimate we can hope for is $|D^2\phi(x,y)|\leq C L \omega'(|a|)/|a|$. Thus, for both cases of $\omega$, if $|z|<|a|/2$, then we have the useful bound
\begin{align}\label{eq:gross_bound_phi}
    |\delta^2\phi(x,\cdot)(y,z)|\leq CL\frac{|\omega'(|a|)|}{|a|}|z|^2.
\end{align}

\vspace{0.1cm}

\subsection{Decomposition of the domain}~
In this section, we estimate the left-hand side of \eqref{eq:evaluate-at-max}, which we denoted by $\II$, using ideas from \cite{BS} (see also \cite{GJS}), by splitting the domain of integration into four parts and estimating each of them separately. 

Recall that we denoted $a_K=x_K-y_K$.
Let $\mathcal{C} = \mathcal{C}(a_K)$ be the cone of directions defined in \eqref{def:cone}, with parameter $\delta_0$. Let also
\[
    \mathcal{D}_1 = B_{\delta_1\,|a_K|} \cap \mathcal{C}^c,
    \qquad
    \mathcal{D}_2 = B_{1/16} \setminus (\mathcal{D}_1 \cup \mathcal{C}).
\]
Here $\delta_1$ is a small quantity to be chosen later, depending possibly on $|a_K|$. We will assume that $\delta_1\ll \delta_0$.

Given $D \subset \R^d$, we introduce the useful notation
\begin{equation*}
    \begin{aligned}
    &\LL[D]w(x)  \coloneqq \int_D J_p\bigl(w(x)-w(x+z)\bigr)\,|z|^{-d-sp}\dd z\\
    & = \int_D \bigl|w(x)-w(x+z)\bigr|^{p-2}\bigl(w(x)-w(x+z)\bigr)\,|z|^{-d-sp}\dd z.
\end{aligned}
\end{equation*}
We then write the quantity $\II$ from \eqref{eq:evaluate-at-max} as 
\begin{eqnarray*}
   \II &  = &\LL[\mathcal{C}]w_1(x_K,t_K)-\LL[\mathcal{C}]w_2(y_K,\tau_K)\\
    & &+ \LL[\mathcal{D}_1]w_1(x_K,t_K)-\LL[\mathcal{D}_1]w_2(y_K,\tau_K)\\
     & &+  \LL[\mathcal{D}_2]w_1(x_K,t_K)-\LL[\mathcal{D}_2]w_2(y_K,\tau_K)\\
     & &+ \LL[B_{1/16}^c]w_1(x_K,t_K)-\LL[B_{1/16}^c]w_2(y_K,\tau_K)\\
     & \eqqcolon &\II_1 + \II_2 + \II_3 + \II_4.
\end{eqnarray*}

We start with the estimate in the concavity cone.

\begin{lemma}[Estimate in $\C$]\label{l:estimate_I1}
For $L$ sufficiently large, we have
\begin{align}\label{eq:estimate_I1}
    \II_1\geq c(L\omega'(|a_K|))^{p-2}\int_{\C} |z|^{p-2-d-sp}\delta^2\phi(\cdot, y_K)(x_K,z)\dd z.
\end{align}
The constants $L$ and $c$ are universal.
\end{lemma}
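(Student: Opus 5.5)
Since $\mathcal{C}\subset B_{|a_K|/2}$ and we take $\delta_1\ll\delta_0$, I would enlarge the patching balls so that on $\mathcal C$ both $w_1$ and $w_2$ are given by the test functions; I expect this is the intended choice, as the proof cannot work otherwise. On that region the spatial parts of the test functions are $\phi(\cdot,y_K)$ and $-\phi(x_K,\cdot)$, and the time terms are constant in $z$, so the increments are
\[
w_1(x_K,t_K)-w_1(x_K+z,t_K)=\phi(x_K,y_K)-\phi(x_K+z,y_K)=-\nabla_x\phi\cdot z+\delta^2\phi(\cdot,y_K)(x_K,z),
\]
\[
w_2(y_K,\tau_K)-w_2(y_K+z,\tau_K)=\phi(x_K,y_K+z)-\phi(x_K,y_K)=\nabla_y\phi\cdot z-\delta^2\phi(x_K,\cdot)(y_K,z).
\]
Here the gradients are evaluated at $(x_K,y_K)$. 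We have $\nabla_x\phi=L\omega'(|a_K|)e+L_2\nabla\psi(x_K)$ and $\nabla_y\phi=-L\omega'(|a_K|)e$, where $e=a_K/|a_K|$. Write $A(z)=w_1$-increment and $B(z)=w_2$-increment, so that $\II_1=\int_{\C}(J_p(A)-J_p(B))|z|^{-d-sp}\dd z$. The cone is symmetric under $z\mapsto -z$, so I would symmetrize, pairing $z$ with $-z$. The linear parts of $A$ and $B$ then have opposite signs at $\pm z$.

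The key step is to compare $A$ with $-B(-z)$. Direct computation gives
\[
A(z)=-L\omega'e\cdot z-L_2\nabla\psi\cdot z+\delta^2\phi(\cdot,y_K)(x_K,z),
\]
\[
B(-z)=L\omega'e\cdot z-\delta^2\phi(x_K,\cdot)(y_K,-z),
\]
so $J_p(A(z))-J_p(B(z))$, summed over $\pm z$, becomes a sum of two differences of the form $J_p(\ell+\text{positive}) - J_p(\ell)$. Each such difference involves a linear part $\ell=\pm L\omega' e\cdot z$ plus the small gradient perturbation $L_2\nabla\psi\cdot z$. For this to work, the $\psi$-gradient term must be negligible compared with $L\omega'(|a_K|)|e\cdot z|$. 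On $\C$ we have $|e\cdot z|\ge\sqrt{1-\delta_0^2}\,|z|$, and $|\nabla\psi|\le C$, so this holds once $L\omega'(|a_K|)\ge L/2\gg L_2$; here $\omega'(r)\ge cr^{\gamma-1}\ge c$, or $\ge 1/2$ in the Lipschitz case. Absorbing the $\psi$-gradient into the linear part in this way is exactly why $L$ must be large.

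Then I would apply Lemma~\ref{l:Jp_est}. For $x,h$ with $|x|\approx L\omega'|z|$ and $0\le h$, it gives $J_p(x+h)-J_p(x)\ge c(|x|+h)^{p-2}h\ge c(L\omega')^{p-2}|z|^{p-2}h$, using $p>2$. The second increments are nonnegative on $\C$ by Lemma~\ref{l:estimate-2o-incremental}, and $h$ is either $\delta^2\phi(\cdot,y_K)(x_K,z)$ or $\delta^2\phi(x_K,\cdot)(y_K,\mp z)$. Both are comparable, by Lemma~\ref{l:estimate-2o-incremental}(iii), to $L\omega'(|a_K|)|a_K|^{-1}|z|^2$ up to log factors. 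One remaining subtlety is the term mixing the $\psi$-gradient with a sign change of $A$. I handle it by writing $A=\ell_1+h_1$ with $\ell_1=-(L\omega' e+L_2\nabla\psi)\cdot z$; then $|\ell_1|\ge cL\omega'|z|$ still holds, and the same monotonicity argument applies to each half.

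Summing the two halves and discarding the nonnegative $\delta^2\phi(x_K,\cdot)$ contribution, or keeping it since it is comparable, yields \eqref{eq:estimate_I1}. I expect the main obstacle to be the bookkeeping of this symmetrization and of how $J_p(A)-J_p(B)$ is grouped so that every difference is monotone in the right direction. In particular, the $\psi$ term breaks the exact antisymmetry between $A$ and $B$, so I would first try grouping $J_p(A(z))-J_p(-B(-z))$. I would check that $A(z)+B(-z)=-L_2\nabla\psi\cdot z+\delta^2\phi(\cdot,y_K)(x_K,z)+\ldots$, and control the residual $-L_2\nabla\psi\cdot z$ after symmetrization via Lemma~\ref{l:Jp_est} by $(L\omega')^{p-2}L_2|z|^{p-1}$. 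Any leftover of this size would be pushed into the other estimates, using \eqref{eq:inequality_psi} if needed.
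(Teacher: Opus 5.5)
Your computation on the cone matches the paper's. The odd part $\nabla_x\phi(x_K,y_K)\cdot z$, with $L_2\nabla\psi(x_K)\cdot z$ kept inside it as in your fourth paragraph, integrates to zero over the symmetric set $\C$. Moreover, $\delta^2\phi>0$ there, and $|\nabla_x\phi(x_K,y_K)\cdot z|\ge \frac{L}{2}\omega'(|a_K|)|z|$ once $L\gg L_2$. Lemma~\ref{l:Jp_est} then supplies the weight $(L\omega'(|a_K|))^{p-2}|z|^{p-2}$, and the $w_2$ term is handled the same way and can be discarded.

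The gap is your first step. You claim the lemma cannot be proved with the patches $B_{\delta_1|a_K|}$, and you replace them by balls containing $\C$, hence of radius about $|a_K|/2$. What you are missing is the comparison the paper's proof starts from: maximality of $\Psi_K$ in one spatial variable at a time. The time terms cancel, so $\Psi_K(x_K+z,y_K,t_K,\tau_K)\le\Psi_K(x_K,y_K,t_K,\tau_K)$ gives
\[
w_1(x_K,t_K)-w_1(x_K+z,t_K)\ge \phi(x_K,y_K)-\phi(x_K+z,y_K)\qquad\text{for all } z\in\C,
\]
whether or not $x_K+z$ lies in the patch. Likewise, maximality in $y$ gives $w_2(y_K,\tau_K)-w_2(y_K+z,\tau_K)\le \phi(x_K,y_K+z)-\phi(x_K,y_K)$. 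By monotonicity of $J_p$, $\II_1\ge\int_{\C}[J_p(A(z))-J_p(B(z))]|z|^{-d-sp}\dd z$ with your $A,B$. Your computation then applies to the $\II_1$ that is actually defined.

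Your enlargement also damages the rest of Section~\ref{s:lipschitz-regularity}. A ball around $x_K$ containing $\C$ also contains $\mathcal D_2\cap B_{|a_K|/2}$. On that set Lemma~\ref{l:estimate_I3} needs $w_1=u$ and $w_2=u$, because it uses the joint shift $\Psi_K(x_K+z,y_K+z,t_K,\tau_K)\le\Psi_K(x_K,y_K,t_K,\tau_K)$ to get a bound independent of $L$. With test functions there instead, $\delta^2\phi$ is negative for directions far from $\pm a_K$. You would be left with the crude bound of Lemma~\ref{l:estimate_I2} with $\delta_1$ replaced by $1/2$. That is a loss of order $(L\omega'(|a_K|))^{p-1}|a_K|^{p(1-s)-1}$, not small relative to the cone gain. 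The contradiction in Theorem~\ref{t:lip-in-space_Chibrata} could then not be closed with these estimates. Shrinking the patch only strengthens the viscosity inequalities, which is why the paper keeps it tiny.

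Finally, drop your last paragraph. After $z\mapsto -z$ the correct pairing is $J_p(A(z))-J_p(B(-z))$, not $J_p(A(z))-J_p(-B(-z))$. Once $L_2\nabla\psi(x_K)\cdot z$ is part of the odd linear part, no residual remains. A residual of size $(L\omega')^{p-2}L_2|z|^{p-1}$ could not be moved into other estimates without changing the statement of the lemma.
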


\begin{proof}
Recall that 
\[
    \II_1=\LL[\C]w_1(x_K, t_K)-\LL[\C]w_2(y_K, \tau_K).
\]
We estimate only the term with $x_K$, since the other term can be bound identically.

Denote $\ell (z) \coloneqq \nabla_x \phi(x_K, y_K)\cdot z $, and note that since $\C$ is symmetric, we get
\[
    \LL[\C](\ell)=0.
\]
From \eqref{eq:IL_contradiction}, we obtain 
\[
    \Psi_K(x_K+z,y_K, t_K, \tau_K)\leq \Psi_K(x_K, y_K, t_K, \tau_K), \quad z\in \C.
\]
We note that the terms depending on time cancel perfectly, and we obtain
\[
    w_1(x_K+z, t_K)-w_1(x_K,t_K)\leq \phi(x_K+z, y_K)-\phi(x_K, y_K), \quad z\in \C.
\]
From the monotonicity of $J_p$ and applying Lemma \ref{l:Jp_est}, we get
\begin{align*}
\LL[\mathcal C]w_1(x_K, t_K)
&\ge \LL[\mathcal C]\phi(\cdot, y_K)(x_K) \\
&= \LL[\mathcal C]\phi(\cdot, y_K)(x_K)
   - \LL[\mathcal C]\ell(x_K) \\
&= (p-1)\int_{\mathcal C}\int_0^1 
   \bigl|\ell(z) + \eta \delta^2\phi(\cdot, y_K)(x_K,z)\bigr|^{p-2} \\
&\qquad\qquad \times \delta^2\phi(\cdot, y_K)(x_K,z)
   \, d\eta \, |z|^{-d-sp}\, dz .
\end{align*}
To further bound this integral, we start by noting that Lemma \ref{l:estimate-2o-incremental} implies that $\delta^2\phi(\cdot, y_K)(x_K,z)$ is strictly positive for $z\in \C$. Note also that 
\[
    \nabla_x \phi(x,y)=L\omega'(|x-y|)\frac{x-y}{|x-y|}+L_2\nabla \psi(x)
\]
and that, for both cases $\omega=\omega_\gamma$ and $\omega=\widetilde \omega$, we have that $\omega'(|x_K-y_K|)$ is away from $0$. Hence, by taking $L$ large, depending only on $L_2$, we get
\[
    |\ell(z)|\geq \frac{L}{2}\omega'(|a_K|) |z|, \quad z\in \C.
\]
Thus, we can further estimate
\[
    \LL[\C]w_1(x_K, t_K)\geq c\left(L\omega'(|a_K|)\right)^{p-2}\int_{\C}|z|^{p-2-d-sp}\delta^2 \phi(\cdot, y_K)(x_K, z)\dd z,
\]
as intended. Similarly, we get
\[
    \LL[\C]w_2(y_K, \tau_K)\leq -c\left(L\omega'(|a_K|)\right)^{p-2}\int_{\C}|z|^{p-2-d-sp}\delta^2 \phi(x_K, \cdot)(z, y_K)\dd z.
\]
This concludes the proof.
\end{proof}

Next, we write the estimate on $\mathcal{D}_1$. Recall that $\delta_1$ is the parameter appearing in the definition of $\mathcal{D}_1$.

\begin{lemma}[Estimate in $\mathcal{D}_1$]\label{l:estimate_I2}
Let $(x_K, y_K, t_K,\tau_K)$ satisfy \eqref{eq:IL_contradiction}. Then for $L$ sufficiently large, we have
\begin{align}\label{eq:estimate_I2}
    \II_2\geq -C (L\omega'(|a_K|))^{p-1} \delta_1^{p(1-s)}|a_K|^{p(1-s)-1}.
\end{align}
The constants $L$ and $C$ are universal.
\end{lemma}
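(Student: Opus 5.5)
On $\mathcal{D}_1\subset B_{\delta_1|a_K|}$ both $w_1$ and $w_2$ coincide with the test functions, so the time terms cancel. The plan is to estimate $\LL[\mathcal{D}_1]w_1(x_K,t_K)$ from below and $\LL[\mathcal{D}_1]w_2(y_K,\tau_K)$ from above separately. For $z\in\mathcal{D}_1$ we have
\[
w_1(x_K,t_K)-w_1(x_K+z,t_K)=\phi(x_K,y_K)-\phi(x_K+z,y_K)=-\nabla_x\phi(x_K,y_K)\cdot z+\delta^2\phi(\cdot,y_K)(x_K,z).
\]
The analogous identity holds for $w_2$ at $y_K$, with $-\phi(x_K,\cdot)$ in place of $\phi(\cdot,y_K)$.

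Write $\ell(z)=\nabla_x\phi(x_K,y_K)\cdot z$. The set $\mathcal{D}_1$ is symmetric, since both the ball and the complement of the symmetric cone are symmetric, so $\LL[\mathcal{D}_1]\ell=0$ in the principal value sense. By Lemma \ref{l:Jp_est},
\[
\LL[\mathcal{D}_1]w_1(x_K,t_K)=(p-1)\int_{\mathcal{D}_1}\int_0^1\bigl|{-\ell(z)}+\eta\,\delta^2\phi(\cdot,y_K)(x_K,z)\bigr|^{p-2}\,\delta^2\phi(\cdot,y_K)(x_K,z)\,d\eta\,|z|^{-d-sp}\,dz.
\]
Since $\delta_1<1/2$, we have $|z|<|a_K|/2$, and the gross bound \eqref{eq:gross_bound_phi} gives
\[
|\delta^2\phi(\cdot,y_K)(x_K,z)|\le CL\,\omega'(|a_K|)|z|^2/|a_K|\le CL\,\omega'(|a_K|)\,\delta_1|z|.
\]
The same argument applies to $\psi$, because its Hessian is bounded and is absorbed for $|a_K|$ small.

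We also need $|\nabla_x\phi(x_K,y_K)|\le CL\,\omega'(|a_K|)$. This holds once $L$ is large relative to $L_2$, because $\omega'(|a_K|)$ is bounded below for both $\omega_\gamma$ and $\widetilde\omega$. The weight therefore satisfies $|\ell+\eta\delta^2\phi|^{p-2}\le C(L\omega'(|a_K|))^{p-2}|z|^{p-2}$, and this is where we use $p>2$. It follows that
\[
\LL[\mathcal{D}_1]w_1(x_K,t_K)\ge -C(L\omega'(|a_K|))^{p-1}|a_K|^{-1}\int_{B_{\delta_1|a_K|}}|z|^{p-d-sp}\,dz.
\]

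The remaining integral converges at the origin because $p-sp>0$, and it equals $C(\delta_1|a_K|)^{p(1-s)}$. This gives
\[
\LL[\mathcal{D}_1]w_1(x_K,t_K)\ge -C(L\omega'(|a_K|))^{p-1}\delta_1^{p(1-s)}|a_K|^{p(1-s)-1}.
\]
By the same computation for $w_2$, using Lemma \ref{l:estimate-2o-incremental}(iii) and \eqref{eq:gross_bound_phi} in the $y$ variable, we get
\[
\LL[\mathcal{D}_1]w_2(y_K,\tau_K)\le C(L\omega'(|a_K|))^{p-1}\delta_1^{p(1-s)}|a_K|^{p(1-s)-1}.
\]
Subtracting the two bounds yields \eqref{eq:estimate_I2}.

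The one delicate point is justifying that $\LL[\mathcal{D}_1]\ell=0$ and the splitting at the level of principal values. Near the origin the integrand involving $\ell$ is only of order $|z|^{p-1-d-sp}$, which need not be integrable. I would handle this by working on $\mathcal{D}_1\setminus B_\epsilon$, using the symmetry there, applying the bounds above uniformly in $\epsilon$, and then letting $\epsilon\to0$.
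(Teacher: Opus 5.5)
Your proposal is correct and follows essentially the same route as the paper. Both arguments remove the odd linear term $\ell$ by the symmetry of $\mathcal{D}_1$, linearize with Lemma \ref{l:Jp_est}, bound $\delta^2\phi$ through the Hessian bound \eqref{eq:gross_bound_phi} and the weight by $C(L\omega'(|a_K|))^{p-2}|z|^{p-2}$, and then integrate $|z|^{p-d-sp}$ over $B_{\delta_1|a_K|}$. Your write-up is slightly more careful in three places: on $\mathcal{D}_1$ you use equality with the test functions rather than monotonicity, you get the sign in front of $\ell$ right, and you justify the principal-value cancellation by truncating at $B_\epsilon$.
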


\begin{proof}
As before, from \eqref{eq:IL_contradiction} and the monotonicity of $J_p$, we have
\begin{align*}
    &\LL[\mathcal{D}_1]w_1(x_K, t_K)\\
    \geq\,& (p-1)\int_{\mathcal{D}_1}\int_0^1|\ell(z)+\eta \delta^2\phi(\cdot,y_K)(x_K, z)|^{p-2}\delta^2\phi(\cdot,y_K)(x_K, z)\dd\eta |z|^{-d-sp}\dd z.
\end{align*}
We estimate $\delta^2\phi(\cdot,y_K)(x_K, z)$ using \eqref{eq:gross_bound_phi}. Moreover, by taking $L$ larger, depending on $L_2$ and universal constants, we obtain, for every $\eta \in [0,1]$,
\begin{align*}
    |\ell(z)+\eta\delta^2\phi(\cdot, y_K)(x_K, z)|\leq &\, CL|\omega'(|a_K|)|\, |z|+CL\frac{|\omega'(|a_K|)}{|a_K|}|z|^2\\
    \leq &\,CL|\omega'(|a_K|)|\, |z|.
\end{align*}
Therefore, we get
\begin{align*}
    \LL[\mathcal{D}_1]w_1(x_K, t_K)\geq&\, -C (L\omega'(|a_K|))^{p-1} |a_K|^{-1} \int_{\mathcal{D}_1} |z|^{p-d-sp\dd z}\\
    =&\,-C (L\omega'(|a_K|))^{p-1} \delta_1^{p(1-s)}|a_K|^{p(1-s)-1}.
\end{align*}
An identical bound holds for $-\LL[\mathcal{D}_1]w_2(y_K, \tau_K)$.
\end{proof}

We now bound the term in $\mathcal{D}_2$.

\begin{lemma}[Estimate in $\mathcal{D}_2$]\label{l:estimate_I3}
Suppose that $u$ satisfies \eqref{assumption:bat-ind} for some
\[
    \kappa\in(0,\min\{sp/(p-2),1\}).
\]
Let $K$ be large enough to satisfy \eqref{eq:InshSil}. Then, for any $\theta\in(0,1)$ and for any $L$ large enough universal, we have
\begin{equation}\label{eq:estimate_I3}
    \begin{aligned}
\II_3 \ge -C  \biggl[
&\int_{\delta_1|a_K|}^{|a_K|^{\theta}} r^{\kappa(p-2)+1-sp}\,\dd r
\allowbreak + |a_K|^{\frac{m-1}{m}\kappa}
\int_{\delta_1|a_K|}^{|a_K|^{\theta}} r^{\kappa(p-2)-sp}\,\dd r  \\
&\allowbreak + |a_K|^{\kappa+\theta(\kappa(p-2)-sp)}
\biggr].
\end{aligned}
\end{equation}
Here, $C$ depends on $\kappa, [u]_{C_x^{0,\kappa}}$ and universal constants.
\end{lemma}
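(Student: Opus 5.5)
We estimate $\LL[\mathcal{D}_2]w_1(x_K,t_K)$ from below and $\LL[\mathcal{D}_2]w_2(y_K,\tau_K)$ from above; by symmetry we treat only the first. On $\mathcal{D}_2$ we have $|z|\ge \delta_1|a_K|$, so $x_K+z$ lies outside the ball where $w_1$ equals the test function. Hence $w_1(x_K,t_K)-w_1(x_K+z,t_K)=u(x_K,t_K)-u(x_K+z,t_K)$ for the $z$-part outside $B_{\delta_1|a_K|}$. We split $\mathcal{D}_2$ into $\mathcal{D}_2\cap B_{|a_K|^\theta}$ and $\mathcal{D}_2\setminus B_{|a_K|^\theta}$ (inside $B_{1/16}$).

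The key input is the maximality of $\Psi_K$. Comparing $(x_K,y_K,t_K,\tau_K)$ with $(x_K+z,y_K,t_K,\tau_K)$, the time terms cancel and we get
\[
u(x_K,t_K)-u(x_K+z,t_K)\ge \phi(x_K,y_K)-\phi(x_K+z,y_K)=-\ell(z)+\delta^2\phi(\cdot,y_K)(x_K,z)-\ell(z)+\ell(z),
\]
that is, $u(x_K)-u(x_K+z)\ge -L[\omega(|a_K+z|)-\omega(|a_K|)]-L_2[\psi(x_K+z)-\psi(x_K)]$. Rather than using $L$ (which would give non-universal bounds), we combine this with the crude H\"older bound \eqref{assumption:bat-ind}. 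By monotonicity of $J_p$, the negative part of the integrand is controlled by $|u(x_K,t_K)-u(x_K+z,t_K)|^{p-2}$ times the negative part of the increment.

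The increment is split as a term from $\psi$ plus a term bounded by the $\kappa$-H\"older seminorm. For the $\psi$ term we use \eqref{eq:inequality_psi}: $|\psi(x_K+z)-\psi(x_K)|\le C\psi(x_K)^{(m-1)/m}|z|+C|z|^2$. Since $L_2\psi(x_K)\le u(x_K,t_K)-u(y_K,\tau_K)\le C|a_K|^\kappa+C|b_K|^{\kappa_0}\le C|a_K|^\kappa$ by \eqref{eq:InshSil}, this is at most $C|a_K|^{\frac{m-1}{m}\kappa}|z|+C|z|^2$. Together with $|u(x_K)-u(x_K+z)|\le C|z|^\kappa$, the near part gives $\int r^{\kappa(p-2)}(r^2+|a_K|^{\frac{m-1}{m}\kappa}r)r^{-1-sp}\,dr$. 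These produce the first two integrals of \eqref{eq:estimate_I3}; for the $r^\kappa$ contribution we bound $r^\kappa\le r$-type terms appropriately, or rather keep the $|z|^2$ term. The key pointwise lower bound is
\[
J_p\bigl(u(x_K,t_K)-u(x_K+z,t_K)\bigr)+J_p\bigl(u(x_K,t_K)-u(x_K-z,t_K)\bigr)\ge -C|z|^{\kappa(p-2)}\bigl(|z|^2+|a_K|^{\frac{m-1}{m}\kappa}|z|\bigr),
\]
obtained by symmetrizing $z\mapsto -z$ (since $\mathcal{D}_2$ is symmetric) so that the linear parts $\ell(\pm z)$ cancel.

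The main obstacle is the far region $|a_K|^\theta\le|z|\le 1/16$. Here we do not use the test function at all. Instead we pair the $w_1$ and $w_2$ integrands using maximality in both variables: $u(x_K,t_K)-u(x_K+z,t_K)\ge u(y_K,\tau_K)-u(y_K+z,\tau_K)-L[\omega(|a_K|)-\omega(|a_K|)]-L_2(\psi(x_K+z)-\psi(x_K))$, comparing with the point $(x_K+z,y_K+z)$. This makes the difference of the two $J_p$ terms nonnegative up to the $\psi$ error, which is bounded by $C|a_K|^\kappa$ via the previous observation, and up to the time mismatch. By Lemma \ref{l:Jp_est}, the resulting error is at most $C|z|^{\kappa(p-2)}|a_K|^{\kappa}$. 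Integrating $r^{\kappa(p-2)-sp-1}$ from $|a_K|^\theta$ gives $C|a_K|^{\kappa+\theta(\kappa(p-2)-sp)}$; this is where $\kappa(p-2)<sp$ is used. Collecting the three contributions yields \eqref{eq:estimate_I3}.
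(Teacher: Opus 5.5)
There is a genuine gap in the near region $\mathcal{D}_2\cap B_{|a_K|^\theta}$, which is where the first two terms of \eqref{eq:estimate_I3} come from. Your far-region argument is essentially the paper's, up to a fixable justification discussed below.

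In the near region you estimate the $x_K$-integral on its own. The only information maximality gives there is the comparison with $(x_K+z,y_K,t_K,\tau_K)$, namely $u(x_K,t_K)-u(x_K+z,t_K)\ge -L[\omega(|a_K+z|)-\omega(|a_K|)]-L_2[\psi(x_K+z)-\psi(x_K)]$.

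\begin{itemize}
\item Symmetrizing $z\mapsto -z$ cancels only the linear part $\ell(\pm z)$. What survives are the second increments $\delta^2\phi(\cdot,y_K)(x_K,\pm z)$, whose $L$-part $L\,\delta^2\omega(|\cdot|)(a_K,\pm z)$ is negative off the cone $\C(a_K)$. It has size $cL\omega'(|a_K|)|z|^2/|a_K|$ for $|z|<|a_K|/2$ (take $z\perp a_K$), and order $L\omega(|z|)$ for $|z|\gg|a_K|$.
\item This is exactly why the gross bound \eqref{eq:gross_bound_phi} is affordable only on the small set $\mathcal{D}_1$. On $\mathcal{D}_2$ it produces an error at least as large as the good term $\II_1$.
\item If you discard the $L$-information, as you propose, only $|u(x_K,t_K)-u(x_K\pm z,t_K)|\le C|z|^\kappa$ remains. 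That allows $J_p(A_+)+J_p(A_-)$ to be as negative as $-c|z|^{\kappa(p-1)}$. For $|z|,|a_K|<1$ and $\kappa<1$, this dominates both $|z|^{\kappa(p-2)+2}$ and $|a_K|^{\frac{m-1}{m}\kappa}|z|^{\kappa(p-2)+1}$.
\item So your ``key pointwise lower bound'' does not follow. Integrating the H\"older bound gives only $C\int_{\delta_1|a_K|}^{|a_K|^\theta}r^{\kappa(p-1)-sp-1}\dd r$, which is not bounded by the right-hand side of \eqref{eq:estimate_I3} uniformly in $|a_K|$. Also, $r^\kappa\le r$ is false for $r<1$.
\end{itemize}

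The missing idea is the one you already use in the far region. For $\delta_1|a_K|\le|z|<|a_K|^\theta$ as well, compare with the simultaneously translated point $(x_K+z,y_K+z,t_K,\tau_K)$. This cancels $L\omega(|x-y|)$ and all time penalizations exactly. With $A=u(x_K,t_K)-u(x_K+z,t_K)$, $B=u(y_K,\tau_K)-u(y_K+z,\tau_K)$ and $D=A-B$, you get
\[
D\ge L_2\bigl(\psi(x_K)-\psi(x_K+z)\bigr)\ge -C\bigl(|z|^2+|a_K|^{\frac{m-1}{m}\kappa}|z|\bigr),
\]
using your own bound on $\nabla\psi(x_K)$. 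Now treat $\II_{3,1}$ as a single integral of $J_p(A)-J_p(B)=(p-1)\int_0^1|B+\eta D|^{p-2}\dd\eta\,D$. The weight is at most $C|z|^{\kappa(p-2)}$ by spatial H\"older continuity at the fixed times $t_K,\tau_K$. Integrating gives exactly the first two terms of \eqref{eq:estimate_I3}, with no symmetrization and no separate treatment of $w_1$ and $w_2$.

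In the far region your final bound is right, but the stated reason is not. The one-sided error $L_2(\psi(x_K+z)-\psi(x_K))$ is of order $|z|^2+|a_K|^{\frac{m-1}{m}\kappa}|z|$, which can be of order one for $|z|\sim 1/16$, so it is not $O(|a_K|^\kappa)$. What actually gives $C|a_K|^\kappa$ is the two-sided bound
\[
|D|\le|u(x_K,t_K)-u(y_K,\tau_K)|+|u(x_K+z,t_K)-u(y_K+z,\tau_K)|\le C|a_K|^\kappa,
\]
from \eqref{assumption:bat-ind} and \eqref{eq:InshSil}. Combine this with $(|B|+|D|)^{p-2}\le C|z|^{\kappa(p-2)}$, which holds since $|z|\ge|a_K|^\theta\ge|a_K|$. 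From there your integration giving $|a_K|^{\kappa+\theta(\kappa(p-2)-sp)}$ is correct.
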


\begin{proof}
Let $ \hat \delta \coloneqq |a_K|^{\theta} $ for some $\theta\in(0,1)$ fixed. Since $|a_K|$ is small, we have $\delta_1|a_K| < \hat \delta < \frac{1}{16}$. We divide $\II_3 $ in two parts as follows
\begin{align*}
    \II_3 =&\, \underbrace{\LL[\mathcal{D}_2 \cap B_{\hat \delta}] u(x_K, t_K) -\LL[\mathcal{D}_2  \cap B_{\hat \delta}] u(y_K, \tau_K)}_{\II_{3,1}} \\
    &+\,\underbrace{\LL[\mathcal{D}_2  \cap B_{\hat \delta}^c] u(x_K,  t_K) -\LL[\mathcal{D}_2  \cap B_{\hat \delta}^c] u(y_K, \tau_K)}_{\II_{3,2}}.
\end{align*}
Define $\delta^1 f(x,z) \coloneqq f(x)- f(x+z)$. As before, we write $\II_{3,1}$ as
\begin{align*}
    \II_{3,1} &= (p-1) \int_{E_{\hat \delta}} \int_{0}^1 \Big|\delta^1 u(\cdot, \tau_K)(y_K,z) +\\
    & \quad +\eta(\delta^1 u(\cdot, t_K)(x_K,z) -\delta^1 u(\cdot, \tau_K)(y_K,z))\Big|^{p-2} \times \\ 
    &  \qquad  \times \big(\delta^1 u(\cdot, t_K)(x_K,z) -\delta^1 u(\cdot,\tau_K)(y_K,z) \big)  \dd  \eta |z|^{-d-sp}  \dd  z,
\end{align*}
where $E_{\hat \delta} \coloneqq \mathcal{D}_2 \cap B_{\hat \delta}$. Recalling that $(x_K, t_K)$ and $(y_K, \tau_K)$ are the points where the maximum in \eqref{eq:IL_contradiction} is attained, we see that $\Psi_K(x_K , y_K, t_K, \tau_K) \geq \Psi_K(x_K + z, y_K + z, t_K, \tau_K)$ and
\[
    \delta^1 u(\cdot, t_K)(x_K,z) -\delta^1 u(\cdot, \tau_K)(y_K,z) \geq C L_2 \delta^1 \psi(x_K,z),
\]
which implies 
\begin{align*}
    \II_{3,1}&\, \geq   C L_2 \int_{E_{\hat \delta}} \int_{0}^1 \Big|\delta^1 u(\cdot, \tau_K)(y_K,z) +\\
    & \quad +\eta(\delta^1 u(\cdot, t_K)(x_K,z) -\delta^1 u(\cdot, \tau_K)(y_K,z) )\Big|^{p-2}\times\\
    &\qquad\times\,\delta^1 \psi(x_K,z)  \dd  \eta  |z|^{-d-sp}  \dd  z.
\end{align*}
Since $u\in C^{0,\kappa}(Q_{5/6})$ in space, we get 
\begin{align*}
    |\delta^1 (\cdot, \tau_K)(y_K,z) + \eta(\delta^1 &u(\cdot, t_K)(x_K,z) -\delta^1 u(\cdot, \tau_K)(y_K,z))|\\
    &\leq 3[u]_{C^{0,\kappa}(Q_{5/6})}|z|^\kappa.
\end{align*}
Using Taylor's expansion of $\psi$, we get
\[
    |\delta^1\psi(x_K, z)|\leq C\left(|z|^2+|\nabla \psi(x_K)|\,|z|\right).
\]
We can further bound $\II_{3,1}$ by
\begin{align*}
    \II_{3,1} &\geq - C L_2 \bigg[\int_{\mathcal{D}_2\cap B_{\hat \delta}} |z|^{\kappa(p-2)+2-d-sp}  \dd  z
    \\
    & \quad + |\nabla \psi(x_K)| \int_{\mathcal{D}_2 \cap B_{\hat \delta}} |z|^{\kappa(p-2)+1-d-sp}  \dd  z   \bigg]\\
    &\geq  - CL_2 \left[\int_{\delta_1|a_K|}^{|a_K|^{\theta}} r^{\kappa(p-2)+1-sp}  \dd  r
    + |\nabla \psi(x_K)| \int_{\delta_1|a_K|}^{|a_K|^{\theta}}  r^{\kappa(p-2)-sp}  \dd  r  \right].
\end{align*}
From \eqref{eq:IL_contradiction} and \eqref{eq:InshSil}, we also have
\begin{align*}
    \psi(x_K) & <\frac{1}{L_2}(u(x_K, t_K)-u(y_K, \tau_K))\\
    & \leq \frac{1}{L_2}(C_1(\kappa)|a_K|^\kappa + C_2|b_K|^{\kappa_0}) \leq \frac{C(\kappa)}{L_2}|a_K|^\kappa.
\end{align*}
Recalling \eqref{eq:inequality_psi}, we get
\[
    |\nabla \psi(x_K)|\leq \left(C(\kappa)\frac{|a_K|^\kappa}{L_2}\right)^\frac{m-1}{m} \eqqcolon C_1 |a_K|^{\frac{m-1}{m}\kappa}.
\]
We use this to conclude the estimate for $\II_{3,1}$
\[
    \II_{3,1}\geq -C L_2 \left[\int_{\delta_1|a_K|}^{|a_K|^{\theta}} r^{\kappa(p-2)+1-sp}  \dd  r
    + C_1 |a_K|^{\frac{m-1}{m}\kappa} \int_{\delta_1|a_K|}^{|a_K|^{\theta}}  r^{\kappa(p-2)-sp}  \dd  r  \right].
\]
For the term $\II_{3,2}$, we can proceed in a similar way to get

\begin{align*}
    \II_{3,2} &\geq - C (|a_K|^\kappa+|b_K|^{\kappa_0})\int_{\mathcal{D}_2 \cap B_{\hat \delta}^c} |z|^{\kappa(p-2)-d-sp}  \dd  z \\
    &\geq -  C|a_K|^\kappa \int_{\hat \delta}^{\frac{1}{16}} r^{\kappa(p-2)-sp-1}  \dd  r
    \\
    &\geq -C |a_K|^{\kappa+\theta(\kappa(p-2)-sp)}. 
\end{align*}

The proof follows by combining both estimates.
\end{proof}

Finally, we control the term involving the tail. Here, we use Assumption \eqref{eq:InshSil} in a crucial way. 

\begin{lemma}[Estimate on $B_{1/16}^c$]\label{l:estimate_I4}
Suppose $u$ satisfies Assumption \eqref{assumption:bat-ind} for some $\kappa\in(0,1]$ and let $K$ be large enough to satisfy \eqref{eq:InshSil}. For $L$ large enough, we have
\begin{align}\label{eq:estimate_I4}
    \II_4\geq -C |a_K|^\kappa,
\end{align}
where  $C$ depends on universal constants and $[u]_{C^{\kappa}_x}$.
\end{lemma}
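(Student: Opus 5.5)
We need to bound $\II_4 = \LL[B_{1/16}^c]w_1(x_K,t_K)-\LL[B_{1/16}^c]w_2(y_K,\tau_K)$ from below. On $B_{1/16}^c$ both test functions coincide with $u$, since $\delta_1|a_K|<1/16$. So the plan is to insert an intermediate term at the common time $t_K$. We write
\[
\II_4 = \bigl(\LL[B_{1/16}^c]u(\cdot,t_K)(x_K)-\LL[B_{1/16}^c]u(\cdot,t_K)(y_K)\bigr) + \bigl(\LL[B_{1/16}^c]u(\cdot,t_K)(y_K)-\LL[B_{1/16}^c]u(\cdot,\tau_K)(y_K)\bigr) \eqqcolon A + B,
\]
where we use the convention $w_1(x_K,t_K)=u(x_K,t_K)$ and $w_2(y_K,\tau_K)=u(y_K,\tau_K)$.

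\textbf{The term $A$ (same time, two points).} Apply Lemma \ref{l:Jp_est} to $J_p(u(x_K,t_K)-u(x_K+z,t_K)) - J_p(u(y_K,t_K)-u(y_K+z,t_K))$. The difference of the arguments is
\[
\bigl(u(x_K,t_K)-u(y_K,t_K)\bigr)-\bigl(u(x_K+z,t_K)-u(y_K+z,t_K)\bigr).
\]
The first bracket is bounded by $C|a_K|^\kappa$ using \eqref{assumption:bat-ind}. The second bracket has no sign and no regularity when $x_K+z$ lies outside $B_1$, so we do not linearize it directly. Instead, we change variables so that $A$ becomes a difference of kernels $|x_K-\xi|^{-d-sp}-|y_K-\xi|^{-d-sp}$ integrated against $J_p(u(x_K,t_K)-u(\xi,t_K))$, plus a term that varies only the base value $u(x_K,t_K)$ versus $u(y_K,t_K)$.

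The kernel difference is $O(|a_K|)|\xi|^{-d-sp-1}$ for $\xi$ away from $x_K$. It is integrable against $(1+|u(\xi)|)^{p-1}$ thanks to the normalization \eqref{assumption:normalized-setting}, and contributes $\ge -C|a_K|\ge -C|a_K|^\kappa$. The symmetric-difference regions $(x_K+B_{1/16}^c)\triangle(y_K+B_{1/16}^c)$ lie in an annulus of width $|a_K|$ where $u$ is bounded, so they contribute $O(|a_K|)$.

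The base-value variation is controlled through Lemma \ref{l:Jp_est} by
\[
C|u(x_K,t_K)-u(y_K,t_K)|\int_{B_{1/16}^c}\bigl(1+|u(y_K+z,t_K)|\bigr)^{p-2}|z|^{-d-sp}\dd z\le C|a_K|^\kappa .
\]
Here we use Hölder's inequality and the tail bound, since $p-2<p-1$.

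\textbf{The term $B$ (same point, two times).} Lemma \ref{l:Jp_est} gives $|J_p(\alpha)-J_p(\beta)|\le C(|\alpha|+|\beta|)^{p-2}|\alpha-\beta|$. The difference $\alpha-\beta$ consists of $u(y_K,t_K)-u(y_K,\tau_K)$, bounded by $C|b_K|^{\kappa_0}$, and $u(y_K+z,\tau_K)-u(y_K+z,t_K)$. After Hölder with exponents $(p-1)/(p-2)$ and $p-1$, together with the uniform tail bound, we obtain
\[
|B|\le C\Bigl(|b_K|^{\kappa_0}+\Bigl(\int_{B_{1/16}^c}|u(y_K+z,t_K)-u(y_K+z,\tau_K)|^{p-1}\frac{\dd z}{|z|^{d+sp}}\Bigr)^{\frac1{p-1}}\Bigr).
\]
By \eqref{eq:InshSil} this is at most $C|a_K|\le C|a_K|^\kappa$. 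This is exactly the reason $K$ was chosen after $L$.

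\textbf{Main obstacle.} The delicate step is $A$. The naive linearization fails because $u(\cdot,t_K)$ has no regularity far away, so the translation/kernel-shift argument must be carried out carefully, keeping $C$ independent of $K$ and of the tail modulus. For $A$ the fallback is the sign-based approach from the comparison proof, using $\Psi_K(x_K,y_K,t_K,\tau_K)\ge\Psi_K(x_K+z,y_K+z,t_K,\tau_K)$ where applicable. Near the boundary of $B_1$ the kernel-shift computation above remains the main tool.
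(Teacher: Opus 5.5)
Your proposal is correct and follows the paper's proof: after the changes of variables $\xi=x_K+z$ and $\xi=y_K+z$, the kernel-difference and symmetric-difference pieces of your $A$ are the paper's $\mathcal{J}_1$ and $\mathcal{J}_2$, and your base-value term in $A$ together with $B$ is the paper's $\mathcal{J}_3$, split through the intermediate value $u(y_K,t_K)$ and closed with the same H\"older inequality and \eqref{eq:InshSil}. Two minor points: the kernel bound should read $C|a_K|\,|x_K-\xi|^{-d-sp-1}$ (equivalently $C|a_K|(1+|\xi|)^{-d-sp-1}$), since $|\xi|^{-d-sp-1}$ is not integrable near the origin, which can lie in the integration region; and the ``fallback'' in your last paragraph is unnecessary, because the decomposition you wrote already yields a constant independent of $K$ and of the tail modulus.
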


\begin{proof}
Recall that
\begin{align*}
    |\II_4|&= \Bigg| \int_{B_{\frac{1}{16}}^c(x_K)} J_p(u(x_K, t_K)-u(z, t_K))|x_K-z|^{-d-sp} \dd z \\
    &\qquad -\int_{B_{\frac{1}{16}}^c(y_K)} J_p(u(y_K, \tau_K)-u(z, \tau_K))|y_K-z|^{-d-sp} \dd z \Bigg|.
\end{align*}
We break $\II_4$ into three terms $|\II_4|\leq \mathcal{J}_1+\mathcal{J}_2+\mathcal{J}_3,$
where
\begin{align*}
    \mathcal{J}_1\!\coloneqq &\left|\int_{B^c_{\frac{1}{16}}\!\!(x_K)}J_p(u(x_K, t_K)-u(z, t_K))\!\left( |x_K-z|^{-d-sp}\!-|y_K-z|^{-d-sp}\right)\!\dd z\right|,\\
    \mathcal{J}_2\!\coloneqq &\,\Bigg|\int_{B^c_{\frac{1}{16}}( x_K)}J_p(u(x_K, t_K)-u(z, t_K))|y_K-z|^{-d-sp}\dd z\\
    & \qquad -\int_{B^c_{\frac{1}{16}}(y_K)}J_p(u(x_K, t_K)-u(z, t_K))|y_K-z|^{-d-sp}\dd z\Bigg|
\end{align*}
and
\begin{align*}
    \mathcal{J}_3\coloneqq \left|\int_{B^c_{\frac{1}{16}}(y_K)}\frac{J_p(u(x_K, t_K)-u(z, t_K))-J_p(u(y_K, \tau_K)-u(z, \tau_K))}{|y_K-z|^{d+sp}}\dd z \right|
\end{align*}

When $|x_K-z| > \frac{1}{16}$ and $|a_K|$ is small, for some constant $C$, we have
\[
    \left||x_K-z|^{-d-sp}-|y_K-z|^{-d-sp}\right| \leq C|x_K-z|^{-d-sp-1}|a_K|.
\]
We bound the first term as
\[
    \mathcal{J}_1 \leq C|a_K|.
\]
To bound $\mathcal{J}_2$, we can assume that $|x_K-y_K|<1/32$ up to taking $K$ large. Note that we need only to integrate in the symmetric difference of the balls $B_\frac{1}{16}(x_K)$ and $B_\frac{1}{16}(y_K)$, which we denote by $B_\frac{1}{16}(x_K) \,\Delta \, B_\frac{1}{16}(y_K)$. Since $|B_\frac{1}{16}(x_K)\, \Delta \, B_\frac{1}{16}(y_K)| \leq C|x_K-y_K|$ and for $z \in B_\frac{1}{16}(x_K) \Delta B_{\frac{1}{16}}(y_K)$, we have $|y_K-z| \geq \frac{1}{16}-|a_K| \geq \frac{1}{32}$, we estimate $\mathcal{J}_2$ as 
\[
    \mathcal{J}_2 \leq  C|a_K|.
\]
To estimate $\mathcal{J}_3$ we use Lemma \ref{l:Jp_est} to bound $J_p(u(x_K, t_K)-u(z, t_K))-J_p(u(y_K, \tau_K)-u(z, \tau_K))$. More precisely, by H\"older continuity of $u$ in space and time, we obtain
\begin{align*}
    &|u(x_K,t_K)-u(z,t_K) - (u(y_K,\tau_K)-u(z,\tau_K))|\\
    \leq &\, C|a_K|^\kappa + |u(z,t_K)-u(z,\tau_K)| +C|b_K|^{\kappa_0},
\end{align*}
which, using that $|b_K|^{\kappa_0}\leq |a_K|^\kappa$, allows us to get 
\begin{align*}
    \mathcal{J}_3 \leq C|a_K|^\kappa +  C\int_{B_\frac{1}{16}^c} &\left( \|u\|_{L^{\infty}(Q_1)}+|u(y_K+z,t_K)| +|u(y_K+z,\tau_K)|\right)^{p-2}\times\\
    &\times|u(y_K+z,t_K)-u(y_K+z,\tau_K)| |z|^{-d-sp}\dd z.
\end{align*}
Using H\"older inequality with exponents $p-1$ and $\frac{p-1}{p-2}$, we obtain
\[
    \mathcal{J}_3\leq C  \left[ |a_K|^\kappa + \left(\int_{B_{\frac{1}{16}}^c} |u(y_K+z, t_K)-u(y_K+z, \tau_K)|^{p-1}|z|^{-d-sp} \dd z \right)^{\frac{1}{p-1}}\right].
\]
By Assumption \eqref{eq:InshSil}, we conclude that
\[
    \mathcal{J}_3\leq C  |a_K|^\kappa .
\]
The proof is complete.
\end{proof}

\subsection{Proof of the Lipschitz regularity in space}\label{ss:ael_tomuma_noiist}

We gather the previous results and prove the Lipschitz regularity in the spatial variable. 

\begin{theorem}[Lipschitz in space]\label{t:lip-in-space_Chibrata}
Let $u$ be a viscosity solution to \eqref{eq:parabolic-fractional-p-laplacian} in $Q_1$. Suppose further that $u \in C_{\rm loc}(-1,0;L^{p-1}_{sp}(\R^d))$ and \eqref{assumption:normalized-setting} and \eqref{assumption:bat-ind} are in force. Then, $u$ is Lipschitz in space at $t_0$ and satisfies
\[
    |u(x,t_0)-u(y,t_0)|\leq C|x-y|, \quad \text{for all }  x,y \in B_{1/2}.
\]
The constant $C$ is universal.
\end{theorem}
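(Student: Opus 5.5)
The plan is to combine the four estimates of Lemmas \ref{l:estimate_time}--\ref{l:estimate_I4} into a single inequality at the maximum point $(x_K,y_K,t_K,\tau_K)$, show it is violated for $L$ large, and bootstrap the spatial exponent in \eqref{assumption:bat-ind} until it reaches the Lipschitz modulus $\widetilde\omega$.

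\textbf{Step 1: the key inequality.} From \eqref{eq:evaluate-at-max} and the splitting $\II=\II_1+\II_2+\II_3+\II_4$ we obtain
\[
\II_1\le C|a_K|^{\frac{\kappa\beta^\ast}{1+\beta^\ast}}-\II_2-\II_3-\II_4 .
\]
For the lower bound on $\II_1$ we insert Lemma \ref{l:estimate-2o-incremental} into \eqref{eq:estimate_I1}. The cone $\C(a_K)\subset B_{|a_K|/2}$ has angular measure comparable to $\delta_0^{d-1}$, and $p-sp>0$. For $\omega=\omega_\gamma$ this gives
\[
\II_1\ge c\,\delta_0^{d-1}L^{p-1}|a_K|^{(\gamma-1)(p-2)+\gamma-2+p-sp}=c\,\delta_0^{d-1}L^{p-1}|a_K|^{(\gamma-1)(p-1)+q_c}.
\]
For $\omega=\widetilde\omega$ it gives
\[
\II_1\ge c\,\delta_0^{d-1}L^{p-1}\,\frac{|a_K|^{q_c}}{\log^2|a_K|}.
\]
We fix $\delta_0\le\bar c_0$ universal. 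We then choose $\delta_1$ as a small universal multiple of $\delta_0^{(d-1)/(p(1-s))}$, which makes the $\mathcal D_1$ bound \eqref{eq:estimate_I2} at most half of $\II_1$ in the H\"older case. In the Lipschitz case $\delta_1$ must also absorb the factor $\log^{-2}|a_K|$, so we take $\delta_1^{p(1-s)}\approx \delta_0^{d-1}/\log^2|a_K|$. This is allowed, since $\delta_1$ may depend on $|a_K|$.

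\textbf{Step 2: the lower order terms.} The remaining terms are
\[
|a_K|^{\frac{\kappa\beta^\ast}{1+\beta^\ast}},\qquad \II_3 \text{ from \eqref{eq:estimate_I3}},\qquad \II_4\ge -C|a_K|^\kappa .
\]
Each must be bounded by $C\,|a_K|^{\sigma}$ with $\sigma$ strictly larger than the exponent of $|a_K|$ in $\II_1$. Once that holds, the factor $L^{p-1}$ in front of $\II_1$, together with $|a_K|\le (2/L)^{1/\gamma}$, gives a contradiction for $L$ large. The point is that $L$ is fixed before $K$, and all constants above are independent of $K$ thanks to \eqref{eq:InshSil}.

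For $\II_3$ the integrals are explicit powers of $\delta_1|a_K|$ and $|a_K|^\theta$. We take $\theta$ close to $1$, $m$ large so that $\frac{m-1}{m}\kappa\approx\kappa$, and $\beta^\ast$ large so that $\frac{\kappa\beta^\ast}{1+\beta^\ast}\approx\kappa$. This reduces the comparison to exponents of the form $\kappa(p-1)-sp+1$ and $\kappa$ against $(\gamma-1)(p-1)+q_c$. The latter is negative or small when $\gamma$ is only slightly above $\kappa$. A direct count shows that the improvement
\[
\gamma=\min\{\kappa+\epsilon_0,\,1^-\}
\]
works with a universal increment $\epsilon_0>0$. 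Here we need $\kappa<sp/(p-2)$ to be able to apply Lemma \ref{l:estimate_I3}; this holds because $q_c<1$ gives $sp/(p-2)>1$.

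\textbf{Step 3: iteration.} With the H\"older modulus, $\Phi\le0$ at every $t_0$ gives \eqref{assumption:bat-ind} with exponent $\gamma$. After translating and rescaling to cover $B_{1/2}$, the constants remain universal. Finitely many iterations give any exponent $\kappa<1$. We then run the same argument once more with $\widetilde\omega$, starting from $\kappa$ close to $1$, which yields the Lipschitz bound.

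\textbf{Main obstacle.} I expect the hardest step to be this final Lipschitz step. There $\II_1$ has size only
\[
L^{p-1}|a_K|^{q_c}\log^{-2}|a_K|,
\]
while $\II_4$ and the time term are of order $|a_K|^{\kappa}$ and $|a_K|^{\kappa\beta^\ast/(1+\beta^\ast)}$. Since $q_c<1$, we have $\kappa>q_c$ for $\kappa$ close to $1$, so these terms do have a power gain. The delicate part is $\II_3$. The term
\[
\int_{\delta_1|a_K|} r^{\kappa(p-2)-sp}\,dr\cdot|a_K|^{\kappa(m-1)/m}
\]
has a lower endpoint contribution of about $(\delta_1|a_K|)^{\kappa(p-2)+1-sp}|a_K|^{\kappa}$. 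With the logarithmically small $\delta_1$, this must still be beaten, which needs $\kappa(p-1)+1-sp>q_c$, i.e.\ $\kappa>(p-2)/(p-1)$. I would verify this exponent bookkeeping carefully, choosing $\kappa$ close to $1$ and $m$ large, and in the case $q_c\le0$ compare directly with $\omega_\gamma$ for $\gamma$ close to $1$ before switching to $\widetilde\omega$.
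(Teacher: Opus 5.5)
You reproduce the paper's architecture: the combined inequality from Lemmas \ref{l:estimate_time}--\ref{l:estimate_I4}, a finite H\"older bootstrap, and a final step with $\widetilde\omega$ and a logarithmically small $\delta_1$. But once those lemmas are given, the proof of this theorem \emph{is} the exponent bookkeeping, and the one concrete choice you make for the bootstrap fails.

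You take $\theta$ close to $1$ and say the comparison reduces to the exponents $\kappa(p-1)-sp+1$ and $\kappa$ against $E=\gamma(p-1)-sp$. This drops the far-field term $|a_K|^{\kappa+\theta(\kappa(p-2)-sp)}$ in \eqref{eq:estimate_I3}. That term comes from $\II_{3,2}$, where only the $C^{0,\kappa}$ bound is available on $|z|\ge|a_K|^\theta$. Its exponent decreases in $\theta$, and
\[
\kappa+\theta\bigl(\kappa(p-2)-sp\bigr)-E=(1-\theta)\bigl(sp-\kappa(p-2)\bigr)-(\gamma-\kappa)(p-1),
\]
which is negative at $\theta=1$ for every $\gamma>\kappa$. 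The resulting inequality $cL^{p-1}\le C|a_K|^{-(\gamma-\kappa)(p-1)}$ already follows from $L|a_K|^\gamma\le C|a_K|^\kappa$, so no contradiction arises. The gain has to come from $1-\theta$, which is why the paper takes $\theta$ \emph{small} in Steps 1--3.

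When $E>0$ (past $sp/(p-1)$, and in the Lipschitz step), the near-field term $|a_K|^{\theta(\kappa(p-2)+2-sp)}$ pulls the other way. The paper balances the two by taking the smallest admissible $\theta$: $\theta=E/(2-\kappa)$ in Step 4 and $\theta=(q_c+\varepsilon_1)/(\kappa(p-2)+2-sp)$ in Step 5. It then checks the far-field exponent with $\kappa$ near $1$ via the auxiliary function $f$.

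Your uniform-increment scheme can be repaired along these lines. In the bootstrap, fix for example $\theta=1/2$ and take $\epsilon_0$ a small multiple of $(1-q_c)/(p-1)$, using $sp-\kappa(p-2)\ge 1-q_c$. In the final step, fix $\theta<1$ with $\theta(1+q_c)>q_c$ \emph{before} sending $\kappa\to1$. As written, though, the ``direct count'' you invoke is not the one you describe.

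A second correction concerns the Lipschitz step: there $\delta_0$ cannot be universal. For $\widetilde\omega(|\cdot|)$ the radial second derivative at $a$ is of order $(|a|\log^2|a|)^{-1}$, while the tangential curvature $\widetilde\omega'(|a|)/|a|$ is of order $|a|^{-1}$. So the cone lower bound \eqref{eq:2nd-increment-Lipschitz} requires $\delta_0\lesssim|\log|a_K||^{-1}$. This is what the paper actually uses in Steps 3 and 5, despite the loose wording of Lemma \ref{l:2nd-increment-omega}. It gives $\II_1\gtrsim L^{p-1}|\log|a_K||^{-(d+1)}|a_K|^{q_c}$ and forces $\delta_1=\bar c_1|\log|a_K||^{-q}$ with $qp(1-s)>d+1$. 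The extra logarithms are harmless, since every error term carries a power gain over $|a_K|^{q_c}$.

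Your treatment of the lower-endpoint contribution when $q_c\le 0$ is correct: requiring $\kappa(p-1)+1-sp>q_c$ ensures the negative powers of $\delta_1$ are beaten. This is the mechanism of the paper's Step 3.
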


\begin{proof}
Denote $\bar \gamma \coloneqq \min\{1,sp/(p-1)\}$. For each fixed $L$, we take $K$ large enough to satisfy \eqref{eq:InshSil}.

We split the proof into the following five steps, assuming that $t_0=0$ in each step. 
\begin{enumerate}
    \item We show that $u\in C^{0,\bar \gamma^-}_x$ locally,
    \item We show that $u\in C^{0,\bar \gamma}_x$ locally when $\bar \gamma<1$,
    \item We show that $u\in C^{0,1}_x$ locally when $\bar \gamma=1$,
    \item We show that $u\in C^{0,1^-}_x$ locally when $\bar \gamma<1$,
    \item We show that $u\in C^{0,1}_x$ locally when $\bar \gamma<1$.
\end{enumerate}

\noindent \textbf{Step 1:} We use a bootstrapping argument to prove that $u\in C^{0,\gamma}_x$ locally for every $\gamma<\bar\gamma$. Suppose that $u\in C^{0,\kappa}_x$ locally for some $\kappa\in(0,\gamma)$. Set
\[
    \kappa_1
    =
    \min\left\{
        \gamma,\,
        \kappa+\frac{1}{2(p-1)},\,
        \kappa+\frac{sp-(p-2)\gamma}{2(p-1)}
    \right\}.
\]
Since
\[
    \gamma<\frac{sp}{p-1}<\frac{sp}{p-2},
\]
we have $sp-(p-2)\gamma>0$. Moreover, $\kappa_1<\kappa+1/(p-1)$, and therefore we can choose $m\geq 3$ large enough so that
\begin{align}\label{eq:m_choice_step1}
    \kappa(p-2)+1
    \geq
    \kappa_1(p-1)-\frac{m-1}{m}\kappa.
\end{align}
In this step, we take $\delta_0$ equal to the universal constant $\bar c_0$ in Lemma \ref{l:2nd-increment-omega}, and we choose $\omega$ to be the H\"older profile $\omega_{\kappa_1}$. Define
\[
    \mathcal{E} \coloneqq \kappa_1(p-1)-sp.
\]
Since $\kappa_1<sp/(p-1)$, we have
\[
    \mathcal{E} < 0.
\]
By Lemma \ref{l:estimate_time}, we have
\begin{align}\label{eq:time_again1}
    \II_1+\II_2+\II_3+\II_4
    \leq
    C |a_K|^{\frac{\kappa \beta^\ast}{1+\beta^\ast}}
    \leq C_1.
\end{align}
We estimate each of these four terms. By Lemma \ref{l:estimate_I1}, we have
\[
    \II_1\geq cL^{p-1}|a_K|^{\mathcal{E}}.
\]
For $\II_2$, Lemma \ref{l:estimate_I2} gives
\[
    \II_2\geq
    -CL^{p-1}|a_K|^{\mathcal{E}}\delta_1^{p(1-s)}.
\]
We take $\delta_1$ universally small, depending on $\delta_0$, so that
\begin{align}\label{eq:fix_delta}
    \II_1+\II_2
    \geq
    c_1L^{p-1}|a_K|^{\mathcal{E}}.
\end{align}
We now choose $\theta$. Since
\[
    \kappa_1
    <
    \kappa+\frac{sp-(p-2)\gamma}{p-1}
    <
    \kappa+\frac{sp-(p-2)\kappa}{p-1}
    =
    \frac{sp+\kappa}{p-1},
\]
we obtain
\[
    \mathcal{E} = \kappa_1(p-1)-sp<\kappa.
\]
Also,
\[
    sp-\kappa(p-2)>0,
\]
because $\kappa<\gamma<sp/(p-2)$. Hence we can choose $\theta\in(0,1)$ so small that
\begin{align}\label{eq:theta_choice_step1}
    \kappa+\theta(\kappa(p-2)-sp)>\mathcal{E}.
\end{align}

We next estimate $\II_3$ by applying Lemma \ref{l:estimate_I3} with this value of $\theta$. The third term is directly controlled by \eqref{eq:theta_choice_step1}:
\[
    |a_K|^{\kappa+\theta(\kappa(p-2)-sp)}
    \leq
    |a_K|^{\mathcal{E}},
\]
for $|a_K|<1$. For the second term, using \eqref{eq:m_choice_step1}, we have
\[
    \kappa(p-2)-sp
    \geq
    \mathcal{E}-\frac{m-1}{m}\kappa-1.
\]
Therefore, since $0<r<1$,
\begin{align*}
    |a_K|^{\frac{m-1}{m}\kappa}
    \int_{\delta_1|a_K|}^{|a_K|^\theta}
    r^{\kappa(p-2)-sp}\dd r
    &\leq
    |a_K|^{\frac{m-1}{m}\kappa}
    \int_{\delta_1|a_K|}^{1}
    r^{\mathcal{E}-\frac{m-1}{m}\kappa-1}\dd r   \\
    &\leq
    C_{\delta_1}
    |a_K|^\mathcal{E}.
\end{align*}
Recall $\delta_1$ was chosen universally small. For the first term, we use the choice
\[
    \kappa_1\leq \kappa+\frac{1}{2(p-1)}.
\]
This gives
\[
    \mathcal{E} = \kappa_1(p-1)-sp
    \leq
    \kappa(p-1)+\frac12-sp.
\]
Hence
\[
    \mathcal{E}-1
    \leq
    \kappa(p-1)-sp-\frac12
    <
    \kappa(p-2)+1-sp.
\]
Thus, again because $0<r<1$,
\begin{align*}
   \int_{\delta_1|a_K|}^{|a_K|^\theta}
   r^{\kappa(p-2)+1-sp}\dd r
   &\leq
   \int_{\delta_1|a_K|}^{1}
   r^{\mathcal{E}-1}\dd r  \\
   &\leq
   C_{\delta_1}|a_K|^\mathcal{E}.
\end{align*}
Combining the previous three estimates, we obtain
\[
    \II_3\geq -C_2|a_K|^\mathcal{E},
\]
where $C_2$ may depend on the fixed universal parameter $\delta_1$.

Finally, Lemma \ref{l:estimate_I4} gives
\[
    \II_4\geq -C|a_K|^\kappa\geq -C_3.
\]
Combining these estimates with \eqref{eq:time_again1}, we reach
\[
    |a_K|^\mathcal{E}\left(c_1L^{p-1}-C_2\right)
    \leq
    C_1+C_3.
\]
For $L$ sufficiently large, the factor $c_1L^{p-1}-C_2$ is positive and tends to infinity. Since $\mathcal{E}<0$ and $|a_K|<1$, we have $|a_K|^\mathcal{E}\geq 1$. Therefore, the left-hand side becomes arbitrarily large as $L\to\infty$, contradicting the previous inequality. This proves that $u\in C_x^{0,\kappa_1}$ locally.

If $\kappa_1=\gamma$, we are done. Otherwise, we repeat the argument with $\kappa$ replaced by $\kappa_1$. Since, until the last step,
\[
    \kappa_{j+1}-\kappa_j
    \geq
    \min\left\{
        \frac{1}{2(p-1)},\,
        \frac{sp-(p-2)\gamma}{2(p-1)}
    \right\}>0,
\]
the iteration reaches $\gamma$ after finitely many steps. Hence $u\in C^{0,\gamma}_x$ locally. Since $\gamma<\bar\gamma$ was arbitrary, we conclude that $u\in C^{0,\bar\gamma^-}_x$ locally.

\vspace{0.1in}

\noindent \textbf{Step 2: } We show that $u\in C^{0,\bar \gamma}_x$ locally, when
\[
    \bar \gamma = \frac{sp}{p-1} < 1.
\]
From Step 1, we know that $u \in C^{0,\kappa}_x(Q_1)$ for all $\kappa<\bar \gamma$. 

In this step, we take $\delta_0$  equal to the universal constant $\bar c_0$ in Lemma \ref{l:2nd-increment-omega} and $\omega$ to be the H\"older profile $\omega_{\bar \gamma}$. We use Lemma \ref{l:estimate_time} to get
\begin{align}\label{eq:time_again}
    \II_1 + \II_2 + \II_3 + \II_4\leq C |a_K|^\frac{\kappa \beta^\ast}{1+\beta^\ast}\leq C_0.
\end{align}
We estimate each of these four terms. By Lemmas \ref{l:estimate_I1} and \ref{l:estimate-2o-incremental}, we have, since $\bar \gamma (p-1) -sp = 0$,
\begin{align*}
    \II_1 \geq cL^{p-1}|a_K|^{\bar \gamma(p-1)-sp}\delta_0^{d-1}=cL^{p-1}\delta_0^{d-1}.
\end{align*}
For $\II_2$, we use Lemma \ref{l:estimate_I2} to get
\begin{align*}
    \II_2 \geq -CL^{p-1}|a_K|^{\bar \gamma(p-1)-sp}\delta_1^{p(1-s)}=-CL^{p-1}\delta_1^{p(1-s)}.
\end{align*}
We can take $\delta_1$ to be universally small, depending also on $\delta_0$, so that
\begin{align*}
    \II_1 + \II_2 \geq c_1L^{p-1}.
\end{align*}

Now we deal with the term $\II_3$. We choose $\kappa<\bar \gamma $ large so that
\[
\kappa(p-2)-sp>-1.
\]
Then we take $\theta\in(0,1)$ in Lemma \ref{l:estimate_I3} small enough to satisfy
\[
    \kappa+\theta[\kappa(p-2)-sp]>0.
\]
Since $u\in C^{0,\kappa}_x$ locally, we get from \eqref{eq:estimate_I3} that
\[
    \II_3\geq -C_1,
\]
since all the exponents of $|a_K|$ in the three terms obtained are positive.

From Lemma \ref{l:estimate_I4}, we get
\[
    \II_4\geq -C|a_K|^\kappa\geq -C_2.
\]
Combining these estimates in \eqref{eq:time_again},
\[
    c_1L^{p-1}\leq C_1+C_2+C_3.
\]
Hence, the contradiction follows easily by taking $L$ large enough.

\vspace{0.1cm}

\noindent \textbf{Step 3: }We now consider the case 
\[
    \bar \gamma = 1 \leq \frac{sp}{p-1}.
\]
In this case, we take $\delta_0=\bar c_0|\log(|a_K|)|^{-1}$ and $\delta_1=\bar c_1|\log(|a_K|)|^{-q}$, where $\bar c_0$ is given by Lemma \ref{l:2nd-increment-omega}, $\bar c_1$ is a small number and $q$ is a large number to be fixed universally. We also take $\omega = \widetilde\omega$ to be the Lipschitz profile. 

Then, from Lemma \ref{l:estimate_I1} combined with the bounds from Lemma \ref{l:2nd-increment-omega}, we get
\begin{align*}
    \II_1&\geq cL^{p-2}\int_{\C} |z|^{p-2-d-sp}\left(L(|a_K|\log^2(|a_K|))^{-1}|z|^2\right)\dd z\\
    &\geq cL^{p-1}(|a_K|\log^2(|a_K|))^{-1}\delta_0^{d-1}\int_0^{|a_K|/2}r^{p(1-s)-1}\dd r\\
    &=cL^{p-1}(|a_K|\log^2(|a_K|))^{-1}\delta_0^{d-1}|a_K|^{p(1-s)}\\
    &=cL^{p-1}\delta_0^{d+1}|a_K|^{p(1-s)-1}.
\end{align*}
For $\II_2$, we get from Lemma \ref{l:estimate_I2} that
\[
    \II_2\geq -CL^{p-1}\delta_1^{p(1-s)}|a_K|^{p(1-s)-1}.
\]
Since $|a_K|$ is small, we can take $q$ large, universal, so that
\[
    c\delta_0^{d+1}-C\delta_1^{p(1-s)}>\frac{c}{2}\delta_0^{d+1},
\]
thus getting
\begin{align*}
    \II_1+\II_2\geq c_1L^{p-1}\delta_0^{d+1}|a_K|^{p(1-s)-1}.
\end{align*}
Before bounding $\II_3$, we choose the parameters $\kappa, m, \theta$ as follows. From Step 1, we can suppose $u\in C^{0,\kappa}_x(Q_1)$, for $\kappa\in(0,1)$ chosen large enough so that
\[
    \kappa(p-1)+1-sp>0.
\]
We then take $m$ large enough to satisfy
\[
    \frac{m-1}{m}\kappa +\kappa(p-2)+1-sp>0.
\]
Finally, we pick $\theta\in(0,1)$ small to satisfy
\[
    \kappa+\theta[\kappa(p-2)-sp]>0.
\]
With these choices, we bound each of the terms in \eqref{eq:estimate_I3}. By the choice of $\kappa$,  
\[
    \int_{\delta_1|a_K|}^{|a_K|^\theta} r^{\kappa(p-2)+1-sp}\dd r\leq C|a_K|^{\theta[\kappa(p-2)+2-sp]}.
\]
Since $p-1\leq sp$, we have $\kappa(p-2)-sp<-1$. Thus
\begin{align*}
        |a_K|^{\frac{m-1}{m}\kappa}\int_{\delta_1|a_K|}^{|a_K|^\theta} r^{\kappa(p-2)-sp}\dd r\leq&\, C|a_K|^{\frac{m-1}{m}\kappa}\delta_1^{\kappa(p-2)-sp+1}|a_K|^{\kappa(p-2)-sp+1}\\
        =&\,C\delta_1^{\kappa(p-2)-sp+1}|a_K|^{\frac{m-1}{m}\kappa+\kappa(p-2)-sp+1}.
\end{align*}
Hence, we get
\begin{align*}
\II_3 \geq&\, -C\Big[
    |a_K|^{\theta[\kappa(p-2)+2-sp]}
    +\delta_1^{\kappa(p-2)-sp+1}
        |a_K|^{\frac{m-1}{m}\kappa+\kappa(p-2)-sp+1}  \\
&\qquad\qquad
    +|a_K|^{\kappa+\theta[\kappa(p-2)-sp]}
\Big] \\
\geq&\,-C_1\delta_1^{\kappa(p-2)-sp+1}|a_K|^{\theta_\kappa},
\end{align*}
for some $\theta_\kappa>0$.

Finally, we use Lemma \ref{l:estimate_I4} to get
\[
    \II_4\geq -C_2|a_K|^\kappa.
\]
Using the estimates above for each term and \eqref{eq:time_again}, we obtain
\begin{align*}
        c_1L^{p-1}\delta_0^{d+1}|a_K|^{p(1-s)-1}\leq C_1\delta_1^{\kappa(p-2)-sp+1}|a_K|^{\theta_\kappa}+C_2|a_K|^\kappa + C |a_K|^\frac{\kappa \beta^\ast}{1+\beta^\ast}.
\end{align*}
Since $p(1-s)-1\leq 0$, this inequality can not hold for $L$ large enough (which implies $|a_K|$ small).

\vspace{0.1in}

\noindent \textbf{Step 4: }We prove that $u\in C^{0,\gamma}_x$ locally for every $\gamma<1$, when $sp<p-1$. Let $\delta_0$ be equal to $\bar c_0$ given by Lemma \ref{l:2nd-increment-omega}. We can also assume, from Step 2, that $u\in C_x^{0,\kappa}$ locally for
\[
    \kappa=\frac{sp}{p-1}<1.
\]
We choose
\[
    \kappa_1
    =
    \min\left\{
        \gamma,\,
        \kappa+\frac{\kappa(2-\kappa)}{4(p-1)}
    \right\}.
\]
Equivalently, since $\kappa=sp/(p-1)$,
\[
    \kappa_1
    =
    \min\left\{
        \gamma,\,
        \kappa+\frac{sp\,[2(p-1)-sp]}{4(p-1)^3}
    \right\}.
\]
Set
\[
    \mathcal{E} \coloneqq \kappa_1(p-1)-sp=(p-1)(\kappa_1-\kappa).
\]
By our choice of $\kappa_1$, we have
\[
    0<\mathcal{E}\leq \frac{\kappa(2-\kappa)}{4}.
\]
We take $m\geq 3$ so large that
\[
    \mathcal{E}<\frac{m-1}{m}\kappa.
\]
We start by proving that $u\in C^{0,\kappa_1}_x$ locally and then conclude using a bootstrapping argument. Here, we choose $\omega$ to be the H\"older modulus of continuity $\omega_{\kappa_1}$.

Using Lemmas \ref{l:2nd-increment-omega}, \ref{l:estimate_I1} and \ref{l:estimate_I2}, by taking $\delta_1$ universally small, we get
\[
    \II_1+\II_2\geq \frac{c}{2}L^{p-1}|a_K|^{\mathcal{E}}.
\]
Since $sp<p-1$, from Lemma \ref{l:estimate_I3} we obtain
\begin{align*}
        \II_3\geq
        -C\left[
        |a_K|^{\theta(2-\kappa)}
        + |a_K|^{\frac{m-1}{m}\kappa+\theta(1-\kappa)}
        + |a_K|^{\kappa(1-\theta)}
        \right],
\end{align*}
for every $\theta\in(0,1)$. We now choose
\[
    \theta=\frac{\mathcal{E}}{2-\kappa}
    =
    \frac{\kappa_1(p-1)-sp}{2-\kappa}.
\]
By the choice of $\kappa_1$, we have
\[
    \theta
    \leq \frac{\kappa}{4}
    <1.
\]
The third exponent is also strictly larger than $\mathcal{E}$. Indeed,
\[
    \kappa(1-\theta)
    \geq \kappa\left(1-\frac{\kappa}{4}\right),
\]
while
\[
    \mathcal{E} \leq \frac{\kappa(2-\kappa)}{4}.
\]
Hence
\[
    \kappa(1-\theta)- \mathcal{E}
    \geq
    \kappa\left(1-\frac{\kappa}{4}\right)
    -\frac{\kappa(2-\kappa)}{4}
    =
    \frac{\kappa}{2}
    >0.
\]
Finally, for the middle exponent, our choice of $m$ gives
\[
    \frac{m-1}{m}\kappa+\theta(1-\kappa)
    >
    \frac{m-1}{m}\kappa
    >
    \mathcal{E}.
\]
Consequently,
\[
    \II_3\geq -C_1|a_K|^{\mathcal{E}}.
\]
Finally, Lemma \ref{l:estimate_I4} gives
\[
    \II_4\geq -C_2 |a_K|^{\kappa},
\]
and Lemma \ref{l:estimate_time} gives
\[
    \II_1+\II_2+\II_3+\II_4\leq C_3|a_K|^{\frac{\kappa \beta^\ast}{1+\beta^\ast}}.
\]
Combining the estimates above, we arrive at
\begin{align}\label{eq:I123}
    \frac{c_1}{2}L^{p-1}|a_K|^\mathcal{E}
    -C_1|a_K|^\mathcal{E}
    -C_2|a_K|^\kappa
    \leq
    C_3|a_K|^{\frac{\kappa \beta^\ast}{1+\beta^\ast}}.
\end{align}
Since
\[
    \mathcal{E}\leq \frac{\kappa(2-\kappa)}{4}<\kappa,
\]
we can choose $\beta^\ast$ large enough so that
\[
    \kappa\frac{\beta^\ast}{1+\beta^\ast}> \mathcal{E}.
\]
Dividing \eqref{eq:I123} by $|a_K|^\mathcal{E}$, we obtain
\[
    \frac{c_1}{2}L^{p-1}
    -C_1
    -C_2|a_K|^{\kappa-\mathcal{E}}
    \leq
    C_3|a_K|^{\frac{\kappa \beta^\ast}{1+\beta^\ast}-\mathcal{E}}.
\]
Since $L\omega_{\kappa_1}(|a_K|)\leq 2$, we have $|a_K|\to0$ as $L\to\infty$. Therefore, taking $L$ sufficiently large yields a contradiction. This proves that $u\in C_x^{0,\kappa_1}$ locally.

If $\kappa_1=\gamma$, we are done. Otherwise, since the increment
\[
    \frac{\kappa(2-\kappa)}{4(p-1)}
\]
is strictly positive, we can repeat the argument finitely many times to reach any prescribed exponent $\gamma<1$. Hence $u\in C^{0,\gamma}_x$ locally for every $\gamma<1$.

\vspace{0.1in}

\noindent \textbf{Step 5: } We finally prove that $u\in C_x^{0,1}$ when $sp<p-1$. We take $\omega$ as the Lipschitz profile $\widetilde \omega$.

From Step 4, we know that $u\in C_x^{0,\kappa}$ locally for all $\kappa<1$. We take $\kappa<1$ satisfying
\begin{align}\label{eq:cond_kappa}
    \kappa>\max\left\{\frac{sp}{p-1}, p(1-s)-1\right\},
\end{align}
and $m$ so large that 
\[
    p(1-s)-1 < \frac{m-1}{m}\kappa.
\]
As in Step 3, we take
\[
    \delta_0 = \bar c_0|\log(|a_K|)|^{-1} \quad \text{and} \quad \delta_1 = \bar c_1|\log(|a_K|)|^{-q},
\]
where $\bar c_0$ is given by Lemma \ref{l:2nd-increment-omega}, $\bar c_1$ is small, and $q$ is large, to be chosen. Then, from Lemmas \ref{l:estimate_I1} and \ref{l:2nd-increment-omega}, we obtain
\begin{align*}
    \II_1&\geq cL^{p-1}\left(|a_K| \log^2(|a_K|)\right)^{-1}\delta_0^{d-1}\int_0^{\frac{|a_K|}{2}} r^{p(1-s)-1}\dd r\\
    &=cL^{p-1}\delta_0^{d+1}|a_K|^{p(1-s)-1}.
\end{align*}
For $\II_2$, we get from Lemma \ref{l:estimate_I2} that
\[
    \II_2\geq -CL^{p-1}\delta_1^{p(1-s)}|a_K|^{p(1-s)-1}.
\]
Since $|a_K|$ is small, we can take $q$ satisfying $d+1<qp(1-s)$, which implies
\[
    c\delta_0^{d+1}-C\delta_1^{p(1-s)}>\frac{c}{2}\delta_0^{d+1}.
\]
We arrive at
\[
    \II_1+\II_2\geq c_1L^{p-1}\delta_0^{d+1}|a_K|^{p(1-s)-1}.
\]
Now we bound $\II_3$, taking
\[
    \theta=\frac{p(1-s)-1+\varepsilon_1}{\kappa(p-2)+2-sp}.
\]
By our choice of $\kappa$, we have $\theta\in(0,1)$, for $\varepsilon_1$ small enough. Indeed, we notice that, since $p < 2/(1-s)$ and $sp/(p-1) < \kappa < 1$, we obtain
\[
    p(1-s) - 1 < 1,
\]
and 
\[
    \kappa(p-2) + 2 - sp \geq 2-\kappa > 1.
\]
Using this $\theta$ in Lemma \ref{l:estimate_I3}, we get
\[
    \int_{\delta_1|a_K|}^{|a_K|^\theta} r^{\kappa(p-2)+1-sp}\dd r\leq |a_K|^{p(1-s)-1+\varepsilon_1},
\]
using that $\kappa(p-2)+2-sp > 1$. Up to taking $m$ even larger, satisfying in addition that $\kappa/m<\theta$, we obtain
\[
    \kappa\frac{m-1}{m} +\theta(\kappa(p-2)-sp+1)>\kappa +\theta(\kappa(p-2)-sp).
\]
Therefore, we can write the second term in $\II_3$ as
\begin{align*}
    |a_K|^{\kappa\frac{m-1}{m}}\int_{\delta_1|a_K|}^{|a_K|^\theta} r^{\kappa(p-2)-sp}\dd r&\leq C|a_K|^{\kappa \frac{m-1}{m}+\theta(\kappa(p-2)-sp+1)}\\
    &\leq C|a_K|^{\kappa+\theta(\kappa(p-2)-sp)}.
\end{align*}
We finally analyze the term $|a_K|^{\kappa+\theta(\kappa(p-2)-sp)}$. We want to fix $\kappa<1$ large enough so that
\begin{align}\label{eq:kappa}
    \kappa+\theta(\kappa(p-2)-sp)\geq p-1-sp+\varepsilon_2,
\end{align}
for some $\varepsilon_2>0$. Since $\theta$ depends on $\kappa$, we define the auxiliary function
\[
    f(\kappa)=\kappa+\theta(\kappa)(\kappa(p-2)-sp)- (p-1-sp).
\]
We start by checking that $f(1)>0$. In fact, in this case,
\[
    \theta(1)=\frac{p-1-sp+\varepsilon_1}{p(1-s)},
\]
and so we have
\begin{align*}
    f(1)=&\,1+(p-1-sp+\varepsilon_1)\frac{p-2-sp}{p(1-s)}-(p-1-sp)\\
    =&\,(p-2-sp)\frac{-1+\varepsilon_1}{p(1-s)}>0.
\end{align*}
Therefore, by continuity of $f$, there exists a small enough $\varepsilon_2>0$ and a large enough $\kappa < 1$ so that \eqref{eq:kappa} holds. We assume this condition in addition to \eqref{eq:cond_kappa}.  Combining the estimates above, we obtain 
\[
    \II_3\geq -C_1 |a_K|^{p(1-s)-1+\varepsilon},
\]
for $\varepsilon=\min\{\varepsilon_1,\varepsilon_2\}$. By Lemma \ref{l:estimate_I4},
\[
    \II_4\geq -C_2|a_K|^\kappa,
\]
and so, by Lemma \ref{l:estimate_time},
\[
    \II_1+\II_2+\II_3+\II_4\leq C_3|a_K|^\frac{\kappa \beta^\ast}{1+\beta^\ast}.
\]
Combining all the estimates, we get
\[
    c_1L^{p-1}\delta_0^{d+1}|a_K|^{p(1-s)-1}\leq C_1 |a_K|^{p(1-s)-1+\varepsilon}+C_2|a_K|^\kappa+C_3|a_K|^\frac{\kappa \beta^\ast}{1+\beta^\ast}.
\]
As in the final part of Step 4, we choose $\beta^\ast$ large enough so that
\[
    \kappa \frac{\beta^\ast}{1+\beta^\ast} > p(1-s) - 1,
\]
which is possible since $\kappa > p(1-s) - 1$. As before, this leads to a contradiction. The proof is complete.
\end{proof}

\section{Time regularity and proof of Theorem \ref{t:Lipschitz}}\label{s:time-variable-estimates}

In this section, we prove the regularity result with respect to time and conclude with the proof of the main theorem. The proof of the time regularity uses the spatial regularity to construct a suitable barrier. This is, by now, a standard argument in the local case, see \cite[Lemma 9.1]{BBO} or, more recently, \cite[Lemma 3.1]{IJS}. We start from the assumption that $u$ is Lipschitz in space, as proven in the previous section.

\begin{proposition}\label{p:time-regularity-estimate-phip-desc}
Assume $u$ is a viscosity solution to \eqref{eq:parabolic-fractional-p-laplacian} in $Q_1$ satisfying \eqref{assumption:normalized-setting}. In addition, suppose that \eqref{assumption:bat-ind} holds with $\kappa = 1$. Then, there exists a constant $C>0$ such that
\[
    \sup_{x \in B_{1/2}}|u(x,t_1) - u(x,t_2)| \leq C |t_1-t_2|^{\alpha},
\]
for every $t_1,t_2 \in (-1/4,0]$.  The exponent $\alpha$ is given by
\begin{align*}
    \alpha = \min\left\{ \left( \dfrac{1}{1 - q_c}\right)^-,1 \right\},
\end{align*}
where $q_c \coloneqq - 1+p(1-s)$.

In particular, $u$ is Lipschitz in time when $q_c>0$.
\end{proposition}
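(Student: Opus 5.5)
We will prove the estimate by a barrier argument at a fixed spatial point, in the spirit of \cite[Lemma 9.1]{BBO}, using the spatial Lipschitz bound (\eqref{assumption:bat-ind} with $\kappa=1$) as the only input. Fix $x_0\in B_{1/2}$ and $t_1<t_2$ in $(-1/4,0]$, and set $h\coloneqq t_2-t_1$. By symmetry ($-u$ is also a solution) it suffices to bound $u(x_0,t_2)-u(x_0,t_1)$ from above.

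\textbf{Barrier construction.} The first step is to choose a radius $\rho\in(0,1/4)$, to be optimized in terms of $h$, and a spatial profile
\[
    \Gamma(x)\coloneqq u(x_0,t_1)+ C_0\rho + C_0\,\frac{|x-x_0|^{\sigma}}{\rho^{\sigma-1}},
\]
with an exponent $\sigma\in(1,2]$ to be chosen. We then define
\[
    \overline w(x,t)\coloneqq \Gamma(x)+A(t-t_1)
\]
in $B_\rho(x_0)\times(t_1,t_2]$, and $\overline w\coloneqq u$ outside $B_\rho(x_0)$. The Lipschitz bound gives $u(x,t_1)\le u(x_0,t_1)+C|x-x_0|\le\Gamma(x)$ on $B_\rho(x_0)$ at the initial time. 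On $\partial B_\rho(x_0)$, $\Gamma\ge u(x_0,t_1)+2C_0\rho$. To control the lateral data for $t\in(t_1,t_2]$ we need $u(y,t)\le \Gamma(y)+A(t-t_1)$ on the annulus. We will handle this instead by a contradiction/first-touching argument: consider the first time and point at which $\overline w-u$ vanishes inside $B_\rho$. If the touching occurs at an interior point $(x^*,t^*)$, we will show that $\overline w$ is a strict classical supersolution there, contradicting Definition \ref{def:viscosity-solution}. If it occurs near the boundary, this is excluded by the choice of $C_0$ together with the Lipschitz bound, provided $A h\lesssim \rho$.

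\textbf{Computing the operator.} We must show $A+(-\Delta_p)^s\overline w(x^*,t^*)>0$ at a touching point. Here $\overline w\ge u$ near $x^*$, with $\overline w$ equal to $\Gamma+A(t-t_1)$ in $B_\rho$ and to $u$ outside. By monotonicity of $J_p$ and the touching, $(-\Delta_p)^s\overline w(x^*)\ge -\,\mathrm{P.V.}\!\int J_p(\Gamma(x^*+z)-\Gamma(x^*))|z|^{-d-sp}dz$, restricted to $B_\rho$, minus a tail term. The tail term is bounded by $C\rho^{-sp}$ using \eqref{assumption:normalized-setting} and the Lipschitz bound. The local term uses $|\nabla\Gamma|\lesssim C_0$ and $|D^2\Gamma|\lesssim C_0|x-x_0|^{\sigma-2}\rho^{1-\sigma}$.

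This is the main obstacle, and it is where $q_c$ enters. A first-order Taylor expansion gives
\[
    |J_p(\Gamma(x^*+z)-\Gamma(x^*))|\lesssim |z|^{p-1}
\]
for the symmetric part. Integrating $|z|^{p-1}\cdot|z|^{-d-sp}$ near $0$ against the second-order increment converges when $q_c>0$ (this is exactly the finiteness of the local part of $(-\Delta_p)^s|x|$). In that case, even the cone profile $\sigma=1$ is admissible and the local contribution is $O(\rho^{q_c})$. For $q_c\le 0$ we must take $\sigma>1$ and use the symmetrized second increment, obtaining a contribution $\lesssim \rho^{\,p-1-sp}\cdot\rho^{-(p-1)}\cdots$. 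After scaling this yields $|(-\Delta_p)^s\Gamma|\lesssim \rho^{q_c-\epsilon}$ at the cost of a small loss $\epsilon$ from the borderline integrability; this loss is the source of the exponent $(1/(1-q_c))^-$. We therefore choose
\[
    A\coloneqq C\bigl(\rho^{\,q_c-\epsilon}+\rho^{-sp}\bigr),
\]
where for $\rho$ small the first term dominates when $q_c\le 0$, and $A$ is bounded when $q_c>0$ (up to the harmless tail term, which is bounded once $\rho$ is fixed at scale $\sim 1$).

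\textbf{Optimization.} Comparison then yields
\[
    u(x_0,t_2)\le \overline w(x_0,t_2)= u(x_0,t_1)+C_0\rho+Ah .
\]
When $q_c\le0$, we balance $\rho\sim \rho^{q_c-\epsilon}h$, that is $\rho= h^{1/(1-q_c+\epsilon)}$, giving
\[
    |u(x_0,t_2)-u(x_0,t_1)|\lesssim h^{(1/(1-q_c))^-}.
\]
When $q_c>0$, $A$ stays bounded as $\rho\to0$ (the local part is $O(\rho^{q_c})$ and the remaining part is controlled uniformly by the Lipschitz bound, not by $\rho^{-sp}$, once we compare increments with a fixed outer radius). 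We then take $\rho\sim h$ and obtain a bound $\lesssim h$, i.e.\ Lipschitz continuity in time. The delicate point to verify carefully is this uniform bound on the nonlocal contribution for $q_c>0$, which I would obtain by comparing $J_p$-increments of $u$ and the cone against $|z|^{p-1}$ on $B_{1/4}$ and using the tail bound only outside $B_{1/4}$.
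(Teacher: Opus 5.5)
Your localized barrier combined with a first-contact argument is a genuinely different route from the paper's, but as written the case $q_c\le 0$ does not close. You bound everything outside $B_\rho(x_0)$ by $C\rho^{-sp}$, set $A=C(\rho^{q_c-\epsilon}+\rho^{-sp})$, and assert that the first term dominates for small $\rho$. It is the opposite: $(q_c-\epsilon)-(-sp)=p-1-\epsilon>0$, so $\rho^{-sp}\gg\rho^{q_c-\epsilon}$. Balancing $\rho\sim\rho^{-sp}h$ then gives only $h^{1/(1+sp)}$, which is strictly worse than $h^{1/(1-q_c)}$ because $1+sp>2+sp-p=1-q_c$.

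The missing step is the one you reserve for $q_c>0$, and it is needed for every $q_c$. At the contact point $(x^*,t^*)$ one has $\overline w(x^*,t^*)=u(x^*,t^*)$ and $\overline w=u$ off $B_\rho(x_0)$. Hence on $B_{1/4}(x_0)\setminus B_\rho(x_0)$ the increments of $\overline w(\cdot,t^*)$ at $x^*$ are bounded by the spatial Lipschitz constant times $|z|$, by \eqref{assumption:bat-ind} with $\kappa=1$ at time $t^*$. That region therefore contributes at least $-C\int_{\rho/2}^{1}r^{p-2-sp}\,dr$, which is $-C\rho^{q_c}$, $-C\log(1/\rho)$ or $-C$ according as $q_c<0$, $q_c=0$, $q_c>0$. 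Only $\R^d\setminus B_{1/4}(x_0)$ should be estimated through \eqref{assumption:normalized-setting}.

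The choice of $\sigma$ also has to be pinned down. Your boundary exclusion is actually correct and needs no proviso $Ah\lesssim\rho$. At the first contact time, $u(x_0,t^*)\le\overline w(x_0,t^*)$ together with the Lipschitz bound at time $t^*$ gives $u(y,t^*)<\overline w(y,t^*)$ whenever $C_0|y-x_0|^{\sigma-1}>C\rho^{\sigma-1}$. So contact occurs in $B_{(C/C_0)^{1/(\sigma-1)}\rho}(x_0)$.

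That same fact means the contact point may sit at or arbitrarily near $x_0$. There $(-\Delta_p)^s$ applied to $|\cdot-x_0|^\sigma$ equals $-\infty$ at $x_0$, and is unbounded below nearby, unless $\sigma>sp/(p-1)$. For the cone $\sigma=1$ (with $C_0>C$, as the exclusion requires), contact is forced exactly at the vertex. There $\overline w$ is not $C^2$ and cannot serve as a test function in Definition \ref{def:viscosity-solution}.

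Taking $\sigma=2$, admissible since $sp/(p-1)<2$, removes both problems. The part of the operator over $B_\rho(x_0)$ is then $O(C_0^{p-1}\rho^{q_c})$ with no $\epsilon$-loss at all. The repaired argument gives $h^{1/(1-q_c)}$ for $q_c<0$, $h\log(1/h)$ for $q_c=0$, and $h$ for $q_c>0$, which implies the statement.

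By contrast, the paper uses the global barrier $\eta+L_0(t-t_0)+L_1|x|^\gamma$ on $B_1$. It controls the lateral data on $\overline{B_1}\setminus B_{1/2}$ by $\|u\|_{L^\infty}\le1$, so no first-contact trick is needed, and fixes $L_1\sim\eta^{1-\gamma}$ from the Lipschitz bound at the initial time. It takes $\gamma$ slightly above $sp/(p-1)$ (or $\gamma=1$ when $q_c>0$) so that $(-\Delta_p)^s|x|^\gamma$ is bounded, and then optimizes in $\eta$. The strict inequality $\gamma>sp/(p-1)$ is where its ${}^-$ comes from. Your route replaces the growth of the barrier away from $x_0$ by the Lipschitz bound of $u$ itself, which is why, once repaired, it is slightly sharper.
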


\begin{proof}
Fix $t_0 \in (-1/4,0)$, consider the cylinder $B_1 \times (t_0,0]$, and define
\[
    \Psi_\eta(x,t) \coloneqq \eta + L_0(t-t_0) + L_1|x|^\gamma,
\]
for $\eta>0$ and universal parameters $L_0$ and $L_1$ that depend further on $\eta$, and
\[
    \gamma = \max\left\{\left(1 - \frac{q_c}{p-1} \right)^+,1\right\}.
\]
This choice ensures that the local part of $(-\Delta_p)^s\Psi_\eta$ is finite. To streamline the discussion, we delay this computation to the end of the proof.

Let us compare the values of the difference $u(x,t) - u(0,t_0)$ with $\Psi_\eta$ on the region $(\overline{B_1} \setminus B_{1/2}) \times (t_0,0]$ and $B_1 \times \{t_0\}$. For $x \in \overline{B_1} \backslash B_{1/2}$, we have
\[
    u(x,t) - u(0,t_0) \leq 2\|u\|_{L^\infty(Q_1)}\leq 2,
\]
and so for $L_1 \geq 2^{1+\gamma}$, we have
\[
    u(x,t) - u(0,t_0) \leq \Psi_\eta(x,t).
\]
For $x \in B_1$ and $t=t_0$, we prove that by making $L_1$ large enough, we have
\[
    u(x,t_0) - u(0,t_0) \leq \Psi_\eta(x,t_0) = \eta + L_1|x|^\gamma.
\]
Indeed, since $u$ is Lipschitz in space, we know that there is a constant $C>0$ such that
\[
    |u(x,t_0) - u(0,t_0)| \leq C|x|, \quad \text{for }  x \in B_{1/2},
\]
and thus we only need to pick $L_1$ such that
\[
    C|x| \leq \eta + L_1|x|^\gamma, \quad \text{for } x \in B_{1/2}.
\]
If we choose
\[
    L_1 \geq C\left(\frac{\eta}{C} \right)^{1-\gamma},
\]
then we have, for $|x| > \eta/C$, that
\[
    C|x| = C |x|^{1 - \gamma}|x|^\gamma < C \left(\frac{\eta}{C}\right)^{1-\gamma}|x|^\gamma.
\]
On the other hand, for $|x|\leq \eta/C$, we directly have
\[
   C|x| \leq \eta \leq \eta + L_1|x|^\gamma. 
\]
Therefore, the constant $L_1$ has to satisfy
\[
    L_1 \geq \max\left\{2^{1+\gamma}, C\left(\frac{\eta}{C} \right)^{1-\gamma} \right\},
\]
and so we choose $L_1 = C\eta^{1-\gamma}$, for $C$ large enough. Note that the above computations simplify considerably when $\gamma=1$, but the same conclusions still hold. We have shown that
\begin{align}\label{eq:compa_psi}
    u- u(0,t_0) \leq \Psi_\eta \quad \text{on} \quad \left(\overline{B}_1 \backslash B_{1/2}\right) \times (t_0,0] \quad \text{and} \quad B_1 \times \{ t_0\}. 
\end{align}
Defining 
\begin{align*}
    v(x,t) \coloneqq \begin{cases}
     \Psi_\eta(x,t) & \text{ for } (x,t) \in B_1 \times (t_0,0],\\
    u(x,t)- u(0,t_0) & \text{ otherwise},
\end{cases}
\end{align*}
it follows from \eqref{eq:compa_psi} that
\[
    u- u(0,t_0) \leq v  \quad \text{on} \quad \left(\R^d \backslash B_{1/2}\right) \times (t_0,0] \quad \text{and} \quad B_1 \times \{t_0\}.
\]
We wish to use the comparison principle to prove that this inequality propagates to the interior cylinder $B_{1/2} \times (t_0,0]$. We start with the following claim, which we justify at the end of the proof.

\textbf{Claim:} There exists a large constant $C$ such that, if $L_0 = CL_1^{p-1}$, then
\[
    \partial_t \Psi_\eta + (-\Delta_p)^s v > 0 \quad \text{in} \quad B_{1/2} \times (t_0,0].
\]
If the claim holds, then we would get, by the comparison principle, that
\[
    u(x,t) - u(0,t_0) \leq \Psi_\eta(x,t) \quad \text{on} \quad B_{1/2} \times (t_0,0].
\]
In particular, we have for any $t \in (t_0,0]$
\[
    u(0,t) - u(0,t_0) \leq \eta + CL_1^{p-1}(t-t_0), 
\]
for any given $\eta>0$. By the choice of $L_1$, we have
\[
    u(0,t) - u(0,t_0) \leq \eta + C\eta^{(1-\gamma)(p-1)}(t-t_0), 
\]
for any $t \in (t_0,0]$ and any $\eta \in (0,1)$. Since, for each fixed $t$, this estimate holds for all $\eta$, we can optimize in the parameter $\eta$ in the following way. If $\gamma=1$, then we can just take $\eta=0$. Otherwise, call
\[
    f(\eta) = \eta+c\eta^{(1-\gamma)(p-1)},
\]
with $c = C(t-t_0)$. We see that $f'(\eta)=0$ when
\[
    \eta = \left(c(\gamma-1)(p-1)\right)^\frac{1}{(\gamma-1)(p-1)+1},
\]
and so, we reach, as intended,
\[
    u(0,t) - u(0,t_0) \leq C(t-t_0)^{\frac{1}{1+(p-1)(\gamma-1)}}.
\]

To prove the claim, we calculate explicitly the value of $(-\Delta_p)^s v(\bar x,\bar t)$ for any $(\bar x, \bar t) \in B_{1/2} \times (t_0,0]$, which we split into the local and nonlocal parts
\[
    \LL_1 \coloneqq \int_{B_1}\frac{J_p(v(\bar{x},\bar t) - v(x,\bar t))}{|x-\bar{x}|^{d + sp}}\dd x, \quad  \LL_2 \coloneqq \int_{\R^d \backslash B_1}\frac{J_p(v(\bar{x},\bar t) - v(x,\bar t))}{|x-\bar{x}|^{d + sp}}\dd x.
\]
For the quantity $\LL_1$, we have
\begin{align*}
    \LL_1  = \int_{B_1}\frac{J_p(\Psi_\eta(\bar{x},\bar{t}) - \Psi_\eta(x,\bar{t}))}{|x-\bar{x}|^{d + sp}}\dd x = L_1^{p-1} \int_{B_1}\frac{J_p(\phi(\bar{x}) - \phi(x))}{|x-\bar{x}|^{d + sp}}\dd x,
\end{align*}
where $\phi(x) \coloneqq |x|^\gamma$. 

When $\gamma = 1$, corresponding to the case $q_c>0$, we simply use that
\[
    |\phi(x) - \phi(\bar x)|^{p-1} \leq |x- \bar x|^{p-1},
\]
and then it readily follows that $|\LL_1|\leq C$. In the case $q_c \leq 0$, we proceed as follows: since $\gamma > 1$, by the $C^{1,\gamma-1}$ regularity of $\phi$, we have
\[
    |\phi(x) - \phi(\bar x)| \leq \gamma|\bar x|^{\gamma-1}|x-\bar x| + C |x-\bar x|^\gamma,
\]
where $C \coloneqq \|\phi\|_{C^{1,\gamma-1}(B_3)}$. If $\bar x = 0$, we can compute the integral directly, and so we assume otherwise. In this case,
\begin{equation}\label{phi_c1gamma}
    |\phi(x) - \phi(\bar x)|^{p-1} \leq C\big(|\bar x|^{(p-1)(\gamma-1)} |x-\bar x|^{p-1} + |x-\bar x|^{(p-1)\gamma}\big), 
\end{equation}
for $x \in B_2(\bar x) \subset B_3$. For $r = |\bar x|/2$, we split
\begin{align*}
    L_1^{-(p-1)} \LL_1 & = \mathrm{P.V.} \int_{B_r(\bar x)}\frac{J_p(\phi(\bar{x}) - \phi(x))}{|x-\bar{x}|^{d + sp}}\dd x + \int_{B_1 \backslash B_r(\bar x)}\frac{J_p(\phi(\bar{x}) - \phi(x))}{|x-\bar{x}|^{d + sp}}\dd x\\
    & = M_1 + M_2.
\end{align*}
In the ball $B_r(\bar x) \subset B_1$, the function $\phi$ is smooth. By Taylor's expansion, we have
\[
    \phi(\bar x+h) = \phi(\bar x) + D\phi(\bar x) \cdot h + R(|h|), \quad \text{where} \quad R(h) \approx r^{\gamma-2}|h|^2. 
\]
Then, by symmetry of the domain and the kernel, we have
\begin{align*}
    M_1 & = \mathrm{P.V.} \int_{B_r}\frac{J_p(-D\phi(\bar x) \cdot h - R(|h|))}{|h|^{d + sp}}\dd h\\
        & = \mathrm{P.V.} \int_{B_r}\frac{J_p(-D\phi(\bar x) \cdot h - R(|h|)) - J_p(-D\phi(\bar x) \cdot h)}{|h|^{d + sp}}\dd h.
\end{align*}
By Lemma \ref{l:Jp_est}, we obtain
\begin{align*}
   |M_1| & \leq C\int_{B_r}\frac{(r^{\gamma-1}|h| + r^{\gamma-2}|h|^2)^{p-2}r^{\gamma-2}|h|^2}{|h|^{d + sp}}\dd h\\
    & \leq C\int_{B_r}\frac{r^{(\gamma-1)(p-2) + \gamma-2}|h|^{p}}{|h|^{d + sp}}\dd h\\
    & = Cr^{\gamma(p-1) - sp}.
\end{align*}
Thus, since $\gamma(p-1) - sp>0$, we obtain that $|M_1| \leq C$. To estimate the term $M_2$ we use \eqref{phi_c1gamma} to obtain
\begin{align*}
    |M_2| & \leq  \int_{B_1 \setminus B_{r}(\bar x)} |\bar x|^{(\gamma-1)(p-1)} |x-\bar x|^{p-1-d-sp}\dd x\\
    & \quad + \int_{B_1} |x-\bar x|^{(p-1)\gamma-d-sp}\dd x \bigg)\\
    & \eqqcolon I_1 + I_2.
\end{align*}
Regarding $I_1$, 
\begin{align*}
    I_1&\leq  C\int_{B_2(\bar x)\setminus B_r(\bar x)}|\bar x|^{(\gamma-1)(p-1)} |x-\bar x|^{p-1-d-sp}\dd x\\
    &\leq Cr^{(\gamma-1)(p-1)}\int_r^2 \tau^{p-2-sp}\dd \tau\\
    &\leq C r^{(\gamma-1)(p-1)+p-1-sp}\\
    &\leq C.
\end{align*}
Finally, the term $I_2$ is bounded directly using polar coordinates
\begin{align*}
    I_2&\leq\int_{B_2(\bar x)} |x-\bar x|^{(p-1)\gamma-d-sp}\dd x\\
    &=C\int_0^2 \tau^{(p-1)\gamma-sp-1}\dd \tau\\
    &\leq C.
\end{align*}

Therefore we obtain
\[
    |\LL_1| \leq C L_1^{p-1}.
\]

For the quantity $\LL_2$, we use the monotonicity of $J_p$ to get
\begin{align*}
    \LL_2 & = \int_{B_1^c} \frac{J_p\bigl(\eta + L_0(\bar t - t_0) + L_1|\bar x|^\gamma -u(x,\bar t) + u(0,t_0)\bigr)}{|x-\bar x|^{d+sp}}\dd x\\
    & \geq \int_{B_1^c} \frac{J_p\bigl(u(0,t_0) -u(x,\bar t)\bigr)}{|x-\bar x|^{d+sp}}\dd x \geq -C, 
\end{align*}
where we used Assumption \eqref{assumption:normalized-setting}. We combine these estimates to obtain
\[
    \partial_t \Psi_\eta + (-\Delta_p)^s v \geq L_0 - CL_1^{p-1} - C > 0
\]
for $L_0=C L_1^{p-1}$ where $C$ is large, universal.

This concludes the claim and thus the proof.
\end{proof}

We now present the proof of Theorem \ref{t:Lipschitz}.

\begin{proof}[Proof of Theorem \ref{t:Lipschitz}]
Fix $(x_0,t_0)\in Q_{1/2}$ and set
\[
    \mathcal{M} \coloneqq \|u\|_{L^\infty(Q_{3/4})} + \sup_{t \in (-(3/4)^2,0]}\|u(\cdot,t)\|_{L^{p-1}_{sp}(\R^d)}    .
\]
Note that from \cite{S}, we know that $\|u\|_{L^\infty(Q_{3/4})}$ is finite.

For $\lambda>0$, define the rescaled function
\[
    v(x,t)\coloneqq \mathcal{M}^{-1}u(\lambda x+x_0,\,\mathcal{M}^{2-p}\lambda^{sp}t+t_0).
\]
For $\lambda$ sufficiently small, $v$ solves the same equation in the weak sense in $Q_4$ and satisfies assumption \eqref{assumption:normalized-setting}. By Theorem \ref{t:NL_Bra_nobo}, it also satisfies assumption \eqref{assumption:bat-ind} for $\kappa=\kappa_0$.
Moreover, by Theorem \ref{t:weak-are-viscosity}, $v$ is also a viscosity solution of \eqref{eq:parabolic-fractional-p-laplacian} in $Q_1$. Hence, we are in a position to apply Theorem \ref{t:lip-in-space_Chibrata}, yielding
\[
    |v(x,0)-v(y,0)|\le C|x-y|, \quad \text{for } x,y\in B_{1/2}.
\]
We now invoke Proposition \ref{p:time-regularity-estimate-phip-desc} to obtain
\[
    \sup_{x\in B_{1/4}}|v(x,t)-v(x,0)|\le C|t|^\alpha, \quad \text{for } t\in(-1/4,0],
\]
where $\alpha$ is as in Proposition \ref{p:time-regularity-estimate-phip-desc}. Combining the previous estimates, we conclude that
\[
    |v(x,t)-v(0,0)|\le C(|x|+|t|^\alpha), \quad \text{for } x\in B_{1/2}\text{ and } t\in(-1/4,0].
\]

Rescaling back to $u$ gives
\[
    |u(x,t)-u(x_0,t_0)|\le C\mathcal{M}\bigl(|x-x_0|+\mathcal{M}^{(p-2)\alpha}|t-t_0|^\alpha\bigr),
\]
for all
\[
    (x,t)\in B_\lambda(x_0)\times\bigl(t_0-\mathcal{M}^{2-p}\lambda^{sp},\,t_0\bigr].
\]
This proves the desired estimate in a (small) cylinder centered at $(x_0,t_0)$. By a covering argument, we obtain
\begin{align*}
    |u(x,t) - u(y,\tau)| & \leq C\mathcal{M}\left(|x-y|+\mathcal{M}^{(p-2)\alpha}|t-\tau|^\alpha \right),
\end{align*}
for any $(x,t), (y,\tau) \in Q_{1/2}$.
\end{proof}

\medskip

{\small \noindent{\bf Acknowledgments.} The authors thank Davide Giovagnoli and Luis Silvestre for insightful conversations. This publication is based upon work supported by King Abdullah University of Science and Technology (KAUST) under Award No. ORFS-CRG12-2024-6430.}

\medskip

\bibliographystyle{amsalpha}

\end{document}